\documentclass[a4paper,onecolumn,10pt]{article}

\usepackage{amsfonts, epsfig, amsmath, amssymb,amsthm, color}
\usepackage{mathrsfs}

\usepackage[ngerman, english]{babel}

\let\oldmarginpar\marginpar
\renewcommand{\marginpar}[1]{\oldmarginpar{\scriptsize\texttt{\color{blue}{#1}}}}

\textwidth 16.5cm \textheight 23cm 
\oddsidemargin 0mm
\evensidemargin -4.5mm
\topmargin -10mm


\usepackage{float}
\usepackage{caption}
\usepackage{subcaption}
\usepackage{mathtools}
\usepackage{amsfonts,amsmath, amssymb,amsthm}
\usepackage{mathrsfs}
\usepackage{mathtools}
\usepackage{ marvosym }
\usepackage{multicol}
\usepackage{listings} 
\usepackage{lipsum}
\usepackage{hyperref}
\usepackage{enumitem}
\bibliographystyle{abbrv}

\lstset{numbers=left,
		numberstyle=\tiny\color{gray},
		basicstyle=\scriptsize,
	 otherkeywords={end},
		keywordstyle=\color{blue},
		emph={@Override},emphstyle=\color{yellow},
		commentstyle=\color{green},
		tabsize=4, 
		xleftmargin=0pt,
		xrightmargin=35pt,
		breaklines=true,
		language=C++
}
\usepackage{tikz}

\usetikzlibrary{fit,matrix}

\DeclareMathOperator{\Var}{Var}
\DeclareMathOperator{\Cov}{Cov}

\DeclareMathOperator{\sgn}{sgn}

\newtheorem{thm}{Theorem}[section]

\newtheorem{lem}[thm]{Lemma}

\newtheorem{rem}[thm]{Remark}

\newcommand{\be}{\begin{equation}}
\newcommand{\ee}{\end{equation}}
\newcommand{\bes}{\begin{equation*}}
\newcommand{\ees}{\end{equation*}}

\newcommand{\ba}{\begin{equation}\begin{aligned}}
\newcommand{\baa}{\begin{equation*}\begin{aligned}}
\newcommand{\ea}{\end{aligned}\end{equation}}

\newcommand{\ban}{\begin{equation*}\begin{aligned}}
\newcommand{\ean}{\end{aligned}\end{equation*}}

\newcommand{\abs}[1]{\left\vert#1\right\vert}

\newcommand{\vecl}{\left(\begin{array}{c}}
\newcommand{\vecr}{\end{array}\right)}

\renewcommand{\d}{\text{d}}

\newcommand{\E}{\mathbf{E}}
\renewcommand{\P}{\mathbf{P}}

\newcommand{\ex}{\mathrm{e}}
\newcommand{\di}{\mathrm{d}}

\newcommand{\rB}{\mathscr{B}}

\newcommand{\rF}{\mathscr{F}}

\newcommand{\e}{\varepsilon}

\newcommand{\bI}{\mathbb{I}}

\newcommand{\bR}{\mathbb{R}}

\newcommand{\bF}{\mathbb{F}}

%




%
%
\begin{document}
\title{Stochastic Energy-Balance Model With A Moving Ice Line}


\date{\today}

\author{Ilya Pavlyukevich\footnote{Institute of Mathematics, Friedrich Schiller University Jena, Ernst--Abbe--Platz 2,
07743 Jena, Germany; ilya.pavlyukevich@uni-jena.de} \ and Marian Ritsch\footnote{Institute of Mathematics, Friedrich Schiller University Jena, Ernst--Abbe--Platz 2,
07743 Jena, Germany; carl.christian.marian.ritsch@uni-jena.de}}

\maketitle

\begin{abstract}
In [SIAM J.\ Appl.\ Dyn.\ Sys., 12(4):2068--2092, 2013], Widiasih proposed and analyzed a deterministic one-dimensional 
Budyko--Sellers energy-balance model with a moving ice-line. In this paper, we extend this model to the stochastic 
setting and analyze it within the framework of stochastic slow-fast systems.
We derive the dynamics for the ice line in the limit of a small parameter as a solution to a stochastic differential equation. The stochastic approach enables the study of co-existing (metastable) climate states as well as the transition dynamics between them.
\end{abstract}

\noindent
\textbf{Keywords:} Budyko--Sellers energy-balance model; moving ice line; slow-fast system; stochastic averaging; stochastic differential system.

\tableofcontents
\smallskip

\noindent
\textbf{2010 Mathematics Subject Classification:} 60H10, 60J70, 37H30, 86A08.

\numberwithin{equation}{section}

\section{Introduction and motivation}

In this paper, we investigate the behavior of a stochastic version of an energy-balance model (EBM) with a moving ice line. This model was initially introduced and studied in a series of papers by Widiasih and co-authors, as documented in \cite{widiasih2013dynamics,walsh2014dynamics}. 

Energy-balance climate models serve as conceptual, low-dimensional representations of Earth's global climate, 
relying on simplified modeling of the planetary energy budget.
The paradigm of EBMs traces back to the seminal $0$-dimensional models developed by Budyko \cite{Budyko-69} and Sellers \cite{Sellers-69}. 
These models are expressed as follows:
\ba
\label{e:EBM_gen}
\frac{\di}{\di t} X(t)=\frac{1}{R}\Big( R_\text{input}(X(t))  - R_\text{output}(X(t)) \Big),
\ea
where $X(\cdot)$ represents the temperature of the Earth-atmosphere system and $R$ denotes Earth's heat capacity.
The absorbed incoming solar radiation is parameterized as
\ba
R_\text{input}(X)=Q(1-\alpha(X)),    
\ea
where $Q$ signifies the global mean solar radiative input and $\alpha=\alpha(\cdot)$ denotes Earth's albedo representing
the proportion of reflected incoming radiation. 

In the 0-dimensional model, it is assumed that the albedo depends on Earth's temperature, with $\alpha=\alpha(X)$. Specifically, the albedo is high for low temperatures ($\alpha=\alpha_\text{snow}=0.62$) and low for high temperatures ($\alpha=\alpha_\text{water}=0.32$) due to the contrasting albedos of ice or snow cover versus water or land.

The outgoing radiation is commonly parameterized using the modified Stefan--Boltzmann law:
\ba
\label{e:SB}
 R_\text{output}(X)= g(X)X^4,
\ea
where $g(\cdot )$ represents the grayness of the system.
 
In a 0-dimensional setting, 
all quantities are averaged over the Earth's surface, 
neglecting variations between tropical and polar regions. It is essential to note that, in reality, 
the absence of ice in tropical latitudes leads to greater absorption of solar radiation and enhanced cooling to space via infrared radiation. As a result, the net radiation balance is positive in tropical regions and negative in polar regions. To maintain equilibrium in the climate system, it becomes necessary to introduce a heat transport term into the model that redistributes heat poleward.

More realistic $1$-dimensional EBMs were first introduced by 
Schneider and Gal-Chen (1973) \cite{schneider1973numerical}, Held and Suarez \cite{held1974simple}, North \cite{north1975analytical}, Ghil \cite{ghil1976climate}. The unknown variable in these models is the 
temperature profile $X=X(t,x)$, where $t\geq 0$ is the time, and $x \in [0, 1]$ is the longitude proxy, with $x=0$ corresponding to the Equator and $x=1$ corresponding to the North Pole.  The energy balance equation \eqref{e:EBM_gen} takes the form
\ba
\label{e:EBM1}
\frac{\partial X(t,x)}{\partial t}=\frac{1}{R}\Big(  Q(x)(1-\alpha(x,\eta)) - (a+bX(t,x)) -c\Big( X(t,x) -\int_0^1 X(t,z)\,\di z   \Big)\Big).  
\ea
In this equation, $Q=Q(x)$ is the mean incoming solar radiation at the top of the atmosphere on the longitude $x$.
The albedo $\alpha=\alpha(x,\eta)$ depends on the parameter $\eta\in[0,1]$ that represents the latitude of the ice line --- the border between 
the ice-covered and ice-free areas.
It is assumed that the albedo
is high for latitudes $x$ poleward of the ice-line, i.e., for $x\in(\eta,1]$ and low for lower latitudes, $x\in [0,\eta)$.
Typically the dependence is assumed to be of the form
\ba
\label{e:albedo}
\alpha(x,\eta)&= \frac{\alpha_\text{snow}+\alpha_\text{water}}{2}  + \frac{\alpha_\text{snow}-\alpha_\text{water}}{2} \tanh(K(x-\eta)),\\
\ea
with some large constant $K\gg 1$.

The term $a+b X$ is the linearized form of the Stefan--Boltzmann law \eqref{e:SB}. 

The transport term, $c\big( X(t,x) -\int_0^1 X(t,z)\,\di z \big)$, eventually parameterizes the heat transfer between latitudes. 
It is important to note that, contrary to diffusive transport usually depicted by the local diffusion operator, the heat transfer process is a global phenomenon that takes into account the mean Earth's temperature $\int_0^1 X(t,z)\,\di z$.

In climatological models, the constants $R$, $a$, $b$ and $c$ are chosen appropriately, see Table~1 in Widiasih \cite{widiasih2013dynamics} 
and Section \ref{s:example} for specific values. 

Both $0$- and $1$-dimensional models exhibit two stable solutions, $X_\text{cold}$ and $X_\text{warm}$, roughly corresponding to 
cold/snowball/glacial and warm/inter-glacial climate conditions. 
However, in these deterministic models, transitions between climate states, or co-existing meta-stable climate states, are not possible.

Therefore, more realistic climate models must consider fast atmospheric fluctuations, which are incorporated into the equation through 
stochastic terms (white or red noise), as proposed by Hasselmann \cite{Hasselmann76}. 
The addition of noise terms significantly alters the qualitative behaviour of the model, as even very small stochastic perturbations 
induce transitions of a system between stable states, resulting in a stochastic climate model demonstrating the phenomenon of \emph{metastability}.
For instance, Benzi et al. \cite{BenziPSV-81,BenziPSV-82,BenziPSV-83} and Nicolis and Nicolis \cite{nicolis1981stochastic,nicolis1981solar,Nicolis-82} conjectured the \emph{stochastic resonance} connection between glaciation cycles and astronomical Milankovitch cycles in a noisy 0-dimensional EMB.

Further information on deterministic and stochastic EMBs can be found in works by Ghil \cite{ghil1981energy}, North et al. \cite{north1981energy}, Imkeller \cite{Imkeller-01}, Dijkstra \cite{dijkstra2013nonlinear}, and Ghil and Lucarini \cite{ghil2020physics}.

Our present work is motivated by a series of papers by Widiasih, Walsh, and Rackauckas \cite{widiasih2013dynamics,walsh2014dynamics,walsh2015budyko} dedicated to the dynamical 1-dimensional EBM with a \emph{moving} ice line. In these papers, it was assumed that the ice line coordinate $\eta$ in \eqref{e:EBM1} and \eqref{e:albedo}, treated as a fixed parameter, evolves dynamically with time according to the ordinary differential equation (ODE)
\ba
\label{e:EBM-eta}
\frac{\di \eta(t)}{\di t}= \e\Big(X(t,\eta(t))-X_\text{critical}\Big),
\ea
where $X_\text{critical}=-10^\circ$C is the critical temperature, and $0<\varepsilon \ll 1$ (with $\varepsilon=10^{-2}$ in Widiasih \cite{widiasih2013dynamics}) is a small time scale parameter accounting for the slow dynamics of the ice line compared to the relatively fast fluctuations of the temperature profile $t\mapsto X(t,\cdot)$.

In \cite{widiasih2013dynamics,walsh2014dynamics,walsh2015budyko}, the authors extensively studied the systems of equations \eqref{e:EBM1} and \eqref{e:EBM-eta} for the temperature profile $X=X(t,x)$ and the ice line $\eta=\eta(t)$, discovering stable temperature profiles and ice line equilibria for various parameterizations of the albedo function.

In this paper, we investigate the \emph{stochastic} EBM obtained by perturbing the system \eqref{e:EBM1}, \eqref{e:EBM-eta} with infinitely dimensional multiplicative noise. We determine the limit dynamics of the ice line $\eta$ as a solution of a stochastic differential equation. In the example situation, the limit system will have two stable coexisting equilibria corresponding to warm and cold climatic conditions.

From a mathematical perspective, our results are derived within the framework of non-linear slow-fast systems and stochastic averaging,  
see, e.g., the classical 
works by Skorokhod \cite{Skorokhod-89} and Freidlin and Wentzel \cite{FreidlinW-12}, as well as a recent paper by Assing et al.\ \cite{assing2021stochastic}. One of the main challenges lies in the proper treatment of the composition term $X(t,\eta(t))$, where both the temperature profile $X(t,\cdot)$ and the ice line $\eta(t)$ are unknown random processes. To obtain a well-defined solution $(X(\cdot,\cdot),\eta(\cdot) )$, we demonstrate, under certain assumptions, that the random mappings $x\mapsto X(t,\cdot)$ are $C^2$ functions with norms $\|X(t,\cdot)\|_\infty$ and $\|\nabla X(t,\cdot)\|_\infty$ bounded in a certain 
sense for all $t\geq 0$. These stability results are obtained with the help of Sobolev embedding inequalities.

The paper is organized as follows. In Section \ref{s:setting}, we formulate the set of assumptions and present the main results. 
Section \ref{s:example} is dedicated to the analysis of our stochastic version of Widiasih's model. 
In Section \ref{s:existence}, we establish the existence and uniqueness of a solution $(X,\eta)$. 
In Section \ref{s:01}, we demonstrate, under certain assumptions, that the ice line does not hit the boundaries $\eta=0$ and $\eta=1$, allowing us to consider non-degenerate perpetual dynamics confined to the interval $(0,1)$. 
The stability of the solution $t\mapsto X(x,\cdot)$ for $t\geq 0$ is established in Section \ref{s:stability}. Finally, the convergence of the ice line dynamics as $\varepsilon\to 0$ in the weak and strong sense is obtained in Section \ref{s:convergence}.
 
\medskip
\noindent
\textbf{Notation.} For $X=X(t,x)$, we denote $\nabla X(t,x):=\frac{\partial}{\partial x}X(t,x)$ and $\nabla^2 X(t,x):=\frac{\partial^2}{\partial x^2}X(t,x)$. 
For a function $F=F(x,\eta, X,Z)$, its partial derivatives are denoted by $F_x$, $F_X$, $F_{xx}$ etc.
For $\eta\in\bR$, we denote by $\eta|^1_0$ its truncated value
\ba
\eta|^1_0:=\begin{cases}
         1,\eta>1,\\
         \eta, \eta\in [0,1],\\
         0,\eta<0.
        \end{cases}
\ea
As usual, $C^k([0,1],\bR)$ is the space of $k$-times ($k\in\mathbb N_0$) continuously differentiable functions $f\colon [0,1]\mapsto \bR$.
The $\sup$-norm of $f$ is denoted by 
\ba
\|f\|_\infty=\sup_{x\in[0,1]}|f(x)|.
\ea
By $\text{Lip}(f)$ we denote a Lipschitz constant of a Lipschitz continuous function $f$.
The Lebesgue space
$L^p([0,1],\bR)$, $p\geq 1$,
is the space of (equivalence classes of) measurable functions $f\colon [0,1]\mapsto \bR$ with the finite norm 
\ba
\|f\|_{L^p}^p=\int_0^1 |f(x)|^p\,\di x<\infty.
\ea
For $p\geq 1$, the Sobolev space $W^{1,p}([0,1],\bR)$ consists of 
absolutely continuous functions on $[0,1]$ whose derivatives $\nabla f$ belong to $L^p([0,1],\bR)$. 
Analogously, the Sobolev space $W^{2,p}([0,1],\bR)$
consists of smooth functions on $[0,1]$ whose derivatives $\nabla f$ 
are absolutely continuous on $[0,1]$ and $\nabla^2 f$
belongs to $L^p([0,1],\bR)$. 

The Sobolev space norm of $f \in W^{k,p}([0,1],\bR)$, $k=1,2$, is defined as
\ba
\|f\|_{ W^{k,p}}^p:= \sum_{\alpha=0}^k \int_0^1 |\nabla^\alpha f(x)|^p\,\di x.
\ea
Equipped with this norm, $W^{k,p}([0,1],\bR)$ is a Banach space. 

We will use the following embedding theorem, see, e.g., Theorem 6.3 in Adams and Fournier \cite{adams2003sobolev}. 
\begin{thm}
\label{t:Sob}
Let $k=1,2$. Then for $p>\frac1k$ it holds that $W^{k,p}([0,1],\bR)$ is continuously embedded in $C([0,1],\bR)$ w.r.t.\ 
the $\sup$-norm, i.e.,
$W^{k,p}([0,1],\bR)\subseteq C([0,1],\bR)$ and there is a constant $C_\text{\rm Sob}(k,p)$ such that for all $f\in W^{k,p}([0,1],\bR)$ 
\ba
\|f\|_\infty \leq C_\text{\rm Sob}(k,p)\|f\|_{ W^{k,p}} .
\ea 
\end{thm}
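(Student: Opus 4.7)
The plan is to prove the embedding directly from the fundamental theorem of calculus combined with Hölder's inequality on the bounded interval $[0,1]$. Both cases $k=1$ and $k=2$ will rest on the same idea: a pointwise value $f(x)$ can be written as a reference value $f(y_0)$ plus an integral of $\nabla f$, and the point $y_0$ can be chosen so that $|f(y_0)|$ is controlled by the mean of $|f|$.

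For $k=1$ and $p>1$, I will exploit that by the paper's definition $f\in W^{1,p}([0,1],\bR)$ is absolutely continuous and hence satisfies $f(x)-f(y)=\int_y^x \nabla f(s)\,\di s$ for every $x,y\in[0,1]$. Jensen's inequality on the unit interval gives $\int_0^1|f(s)|\,\di s\leq\|f\|_{L^p}$, so I can pick a reference point $y_0\in[0,1]$ with $|f(y_0)|\leq\|f\|_{L^p}$. Hölder's inequality with conjugate exponent $q=p/(p-1)$ then yields $\bigl|\int_{y_0}^x\nabla f(s)\,\di s\bigr|\leq |x-y_0|^{1/q}\|\nabla f\|_{L^p}\leq\|\nabla f\|_{L^p}$, so that $\|f\|_\infty\leq\|f\|_{L^p}+\|\nabla f\|_{L^p}$. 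The power-mean inequality $a+b\leq 2^{1-1/p}(a^p+b^p)^{1/p}$ (valid for $p\geq 1$, $a,b\geq 0$) will finally convert this into $\|f\|_\infty\leq 2^{1-1/p}\|f\|_{W^{1,p}}$, giving $C_{\text{Sob}}(1,p)=2^{1-1/p}$.

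For $k=2$ and $p\geq 1$, I would simply observe that $W^{2,p}([0,1],\bR)\subseteq W^{1,p}([0,1],\bR)$ with $\|f\|_{W^{1,p}}\leq\|f\|_{W^{2,p}}$, so the bound follows immediately from the $k=1$ case. In the more delicate quasi-Banach subrange $1/2<p<1$, I would first use that $\nabla f$ is absolutely continuous (hence continuous and bounded on $[0,1]$) and then apply the same reference-point plus Hölder argument to $\nabla f$ with exponent $2p>1$, after the intermediate-derivative inclusion $W^{2,p}\hookrightarrow W^{1,2p}$ obtained via a Gagliardo--Nirenberg style interpolation in one dimension; this would bound $\|\nabla f\|_\infty$ and hence $\|f\|_\infty$ in terms of $\|f\|_{W^{2,p}}$.

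The main obstacle is a clean treatment of the quasi-Banach range $p<1$, where the functional $(\int|g|^p)^{1/p}$ fails the triangle inequality and Hölder duality is unavailable. The pragmatic route, consistent with the paper's reliance on Adams and Fournier's Theorem 6.3, is to carry out the elementary Hölder argument in full detail for the $p\geq 1$ cases and to invoke the interpolation or fractional-Sobolev machinery only where strictly necessary; since later in the paper only standard values of $p$ arise, the elementary Hölder argument alone is sufficient for all downstream applications.
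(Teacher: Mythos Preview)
The paper does not supply its own proof of this statement; it is quoted as a standard Sobolev embedding with a reference to Theorem~6.3 in Adams and Fournier. Your elementary argument via the fundamental theorem of calculus and H\"older's inequality is correct for $k=1$, $p>1$ (and in fact extends verbatim to $p=1$, since the H\"older step becomes the trivial bound $\bigl|\int_{y_0}^x\nabla f\bigr|\leq\|\nabla f\|_{L^1}$), and the reduction $W^{2,p}\subseteq W^{1,p}$ then handles $k=2$ for all $p\geq 1$. The only range your detailed argument does not cover is $k=2$, $\tfrac12<p<1$; the Gagliardo--Nirenberg sketch you give there is plausible but not fully worked out, and the quasi-Banach issues you flag are real. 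Since the paper only ever invokes the embedding with $p\geq 2$ (namely $C_{\text{Sob}}(1,2)$ and $C_{\text{Sob}}(1,p)$ for $p\geq 2$), your elementary proof already covers every application that occurs downstream.
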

For $k=1$ and $p=2$ we will use the least (optimal) value of the embedding constant $C_\text{\rm Sob}(1,2)$ found by Marti \cite{marti1983evaluation}:
\ba
\label{e:optSob}
C_\text{\rm Sob}(1,2)=\tanh(1)^{-1/2}\approx 1.1459.
\ea

\medskip
\noindent
\textbf{Acknowlegements.} The text of this paper was partially reviewed for spelling and grammar using ChatGPT 3.5. The plots in Fig.~\ref{f:1} were produced 
with the help of Wolfram Mathematica 13.

\section{Setting and main results\label{s:setting}}

Let $(\Omega,\rF,\bF,\P)$ be a filtered probability space satisfying the usual hypothesis, and let $(B_j)_{j\geq 1}$ and $W$ 
be independent standard Brownian
motions.

For measurable functions 
\ba
F&\colon [0,1]\times[0,1]\times\bR\times\bR\to\bR,\\
f,\sigma &\colon  [0,1]\times\bR\times\bR\to\bR,\\
\Sigma=(\Sigma_1,\Sigma_2,\dots)&\colon [0,1]\times[0,1]\times\bR\times\bR\to\bR^\infty,
\ea
a finite signed measure $\mu$ on $([0,1],\rB([0,1]))$, we consider the following stochastic differential system 
\ba
\label{diffeq} 
X(t,x)&=X_0(x)+ \int^t_0 F(x,\eta(s)|^1_0,X(s,x),Z(s))\,\di s+
 \sum_{j=1}^\infty \int^t_0 \Sigma^j(x,\eta(s)|^1_0,X(s,x),Z(s))\, \di B^j(s), \\
Z(t)&=\int_0^1 X(t,z)\,\mu(\di z),\\
\eta(t)&=\eta_0+\int^t_0f(\eta(s)|^1_0, X(s,\eta(s)|^1_0),Z(s))\,\di s 
 + \int^t_0\sigma(\eta(s)|^1_0, X(s,\eta(s)|^1_0),Z(s))\,
 \di W(s),
\ea
where $X_0\colon[0,1]\to\bR$ is a measurable function, $\eta_0\in \bR$. At first we do not demand, that the ``ice line'' process $\eta$
takes values in the interval $[0,1]$. However since all the functions, and in particular $X(\cdot,x)$ are defined for $x\in [0,1]$,
we truncate the process $\eta$ correspondingly. 

The differential system \eqref{diffeq} can be seen as a continuum of stochastic differential equations parameterized by the coordinate $x\in[0,1]$.
Since it does not contain partial derivatives w.r.t.\ $x$ and is not subject to any boundary conditions at $x=0$ and $x=1$, 
it is not a stochastic partial differential equation. However, the peculiarity 
of this system consists in the composition term $X(t,\cdot)\circ \eta(t)|_0^1$, where the random solution $X$ is evaluated at the random point 
$\eta(t)|_0^1$.

The first result concerns the existence and uniqueness of a (strong) solution of the system \eqref{diffeq}. 
We impose the following set of assumptions \textbf{A}$_1$--\textbf{A}$_3$ on the coefficients of \eqref{diffeq}.
 
\medskip

\noindent 
\textbf{A}$_1$
The functions $f$, $F$ and $\sigma$ 
 as well as the norm
\ba
\|\Sigma(\cdot,\cdot,\cdot,\cdot)\|_{l^2}: = \Big(\sum_{j=1}^\infty \Sigma^j(\cdot,\cdot,\cdot,\cdot)^2 \Big)^{1/2} 
\ea
are Lipschitz continuous with respect to all variables.

\medskip

\noindent 
\textbf{A}$_2$ 
The norms of the derivatives
\ba
&\|\Sigma_x\|_{l^2}, \ \|\Sigma_{X}\|_{l^2},\ \| \Sigma_{xx}\|_{l^2},\
\|\Sigma_{xX}\|_{l^2},\     \|\Sigma_{XX}\|_{l^2}
\ea
are bounded and Lipschitz continuous with respect to all variables, where
\ba
\Sigma_x&=(\Sigma^1_x,\Sigma^2_x,\dots)   ,  &&&  \Sigma_X&=(\Sigma^1_X,\Sigma^2_X,\dots), \\
\Sigma_{xx}&=(\Sigma^1_{xx},\Sigma^2_{xx},\dots), &&&  \Sigma_{xX}&=(\Sigma^1_{xX},\Sigma^2_{xX},\dots),\\
\Sigma_{XX}&=(\Sigma^1_{XX},\Sigma^2_{XX},\dots).
\ea
Moreover, the derivatives
\ba
F_x,\  F_X,\  F_{xx},\ F_{xX},\ F_{XX}  
\ea
are also bounded and Lipschitz continuous with respect to all variables.

\medskip

\noindent 
\textbf{A}$_3$: 
$\mu\colon \rB([0,1])\to\bR$ is a finite signed measure with the total variation $M:=|\mu|([0,1])\in[0,\infty)$.

\begin{thm} 
\label{existence}
Let assumptions $\mathbf{A}_1$, $\mathbf{A}_2$ and $\mathbf{A}_3$ hold true. 
Then for any $X_0(\cdot)\in C^2([0,1],\bR)$ and $\eta_0\in \bR$,
the system \eqref{diffeq} 
has a unique continuous Markovian solution $(X,\eta)$ such that $X(t,\cdot)\in C^2([0,1],\bR)$, $t\geq 0$.
\end{thm}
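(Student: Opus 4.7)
\medskip
\noindent
\textbf{Proof proposal.} The plan is a Picard iteration on the triple $(X,Z,\eta)$, carried out at the level of $C^2([0,1],\bR)$-valued processes. Starting from $(X_0(\cdot),\int_0^1 X_0\,\di\mu,\eta_0)$, define the iterate $(X^{(n+1)},Z^{(n+1)},\eta^{(n+1)})$ by plugging the previous triple into the right-hand side of \eqref{diffeq}. For each fixed $x\in[0,1]$ the equation for $X^{(n+1)}(\cdot,x)$ is a classical It\^o SDE with state-Lipschitz coefficients (by $\mathbf{A}_1$, with $\|\Sigma\|_{l^2}<\infty$ making the sum of It\^o integrals well defined), and it admits a unique strong solution. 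Then $Z^{(n+1)}(t):=\int_0^1 X^{(n+1)}(t,z)\,\mu(\di z)$ is well defined by $\mathbf{A}_3$ and satisfies $|Z^{(n+1)}-Z^{(n)}|\leq M\,\|X^{(n+1)}-X^{(n)}\|_\infty$, and once the composition $X^{(n+1)}(s,\eta^{(n)}(s)|^1_0)$ has been given meaning, the scalar equation for $\eta^{(n+1)}$ is a standard one-dimensional SDE.

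The crux of the argument is to show that $X^{(n+1)}(t,\cdot)\in C^2([0,1],\bR)$ with moment bounds uniform in $n$ and $t\in[0,T]$. Formal differentiation of the $X$-equation once and twice in $x$ produces linear SDEs for $\nabla X^{(n+1)}$ and $\nabla^2 X^{(n+1)}$ whose coefficients are polynomial in $\nabla X^{(n+1)}$ with bounded, Lipschitz coefficient functions, by $\mathbf{A}_2$. Standard $L^p$-SDE estimates and Gr\"onwall then yield, for every $p>1$ and $T>0$,
\begin{equation*}
\sup_n \E\sup_{t\leq T}\|X^{(n)}(t,\cdot)\|_{W^{2,p}}^p<\infty,
\end{equation*}
and the Sobolev embedding of Theorem~\ref{t:Sob} upgrades this to $X^{(n)}(t,\cdot)\in C^1([0,1],\bR)$ a.s.\ with uniform-in-$n$ moment bounds on $\|X^{(n)}(t,\cdot)\|_\infty$ and $\|\nabla X^{(n)}(t,\cdot)\|_\infty$; a Kolmogorov-type continuity argument applied to the $\nabla^2 X^{(n)}$-equation supplies the remaining $C^2$-regularity.

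Equipped with these bounds, the composition splits as
\begin{equation*}
|X^{(n+1)}(s,\eta^{(n+1)}|^1_0)-X^{(n)}(s,\eta^{(n)}|^1_0)|\leq\|X^{(n+1)}(s,\cdot)-X^{(n)}(s,\cdot)\|_\infty+\|\nabla X^{(n)}(s,\cdot)\|_\infty\,|\eta^{(n+1)}-\eta^{(n)}|,
\end{equation*}
so a routine Burkholder--Davis--Gundy plus Gr\"onwall estimate based on $\mathbf{A}_1$ closes a contraction inequality for $\E\sup_{t\leq T_0}(\|X^{(n+1)}-X^{(n)}\|_\infty^2+|\eta^{(n+1)}-\eta^{(n)}|^2)$ on an interval $[0,T_0]$ short enough that the various Lipschitz and Sobolev constants are absorbed by $T_0$; concatenation then extends the unique solution to $[0,\infty)$. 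Markovianity is automatic because every coefficient in \eqref{diffeq} is a deterministic function of the current $(X(t,\cdot),\eta(t),Z(t))$, with $Z(t)$ itself a deterministic functional of $X(t,\cdot)$, and the driving noises have independent increments. The main obstacle is the self-referential nature of the set-up: the term $X(s,\eta(s)|^1_0)$ is meaningless until $X(t,\cdot)$ is continuous in $x$, yet that continuity only emerges \emph{a posteriori} from the $\nabla X$-equation; running the whole iteration at the level of $C^2$-valued processes, with the spatial regularity estimates driven by $\mathbf{A}_2$ and the Sobolev embedding uniformly in $n$, is what reconciles the two.
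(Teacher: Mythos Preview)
Your proposal identifies the right difficulty---the self-referential composition $X(s,\eta(s)|_0^1)$---but the contraction step does not close as written. In the inequality
\[
|X^{(n+1)}(s,\eta^{(n+1)}|^1_0)-X^{(n)}(s,\eta^{(n)}|^1_0)|\leq\|X^{(n+1)}(s,\cdot)-X^{(n)}(s,\cdot)\|_\infty+\|\nabla X^{(n)}(s,\cdot)\|_\infty\,|\eta^{(n+1)}-\eta^{(n)}|,
\]
the factor $\|\nabla X^{(n)}(s,\cdot)\|_\infty$ is random and unbounded; moment bounds of the form $\sup_n\E\|\nabla X^{(n)}\|_\infty^p<\infty$ do not convert it into a deterministic Lipschitz constant. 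After squaring and taking expectation you face $\E\bigl[\|\nabla X^{(n)}\|_\infty^2\,|\eta^{(n+1)}-\eta^{(n)}|^2\bigr]$, and neither H\"older (which shifts the problem to higher moments of the $\eta$-difference, where the same issue recurs) nor a choice of small $T_0$ can absorb this random coefficient. The ``routine Gr\"onwall'' you invoke therefore does not yield a genuine contraction for $G_n(t)=\E\sup_{s\le t}(\cdots)$.

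The paper circumvents this by a truncation: it replaces $\nabla X$ by an auxiliary process $Y^\phi$ satisfying the variational equation with $\nabla X$ replaced by $\phi(Y^\phi)$, where $\phi$ is a bounded $C^2$ cut-off with $\phi(x)=x$ on a large interval. The composition is then written as $X^\phi(s,0)+\int_0^{\eta^\phi}\phi(Y^\phi(s,z))\,\di z$, whose Lipschitz constant in $\eta$ is the \emph{deterministic} bound $C_\phi$. This makes the Picard contraction for the truncated system close in a standard way (the constants depend on $C_\phi$). The key second step is that the a~priori $L^p$-bounds on $Y^\phi$ and $\nabla Y^\phi$---obtained from $|\phi(y)|\le|y|$ and $|\phi'|\le1$, not from the bound $C_\phi$---are \emph{independent of} $C_\phi$. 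Taking $\phi=\phi_n$ with $\phi_n(x)=x$ on $[-n,n]$ and defining stopping times $\tau_n=\inf\{t:\|Y^{\phi_n}(t,\cdot)\|_\infty\ge n\}$, the solutions of the truncated systems are consistent on $[0,\tau_n]$ and coincide there with the untruncated system; the $\phi$-free Sobolev bound on $\|Y^{\phi_n}\|_{W^{1,p}}$ gives $\P(\tau_n\le T)\to0$, removing the truncation. Your argument could be repaired along these lines, but as stated it is missing exactly this device.
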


The next condition guarantees that if $\eta_0\in(0,1)$, then the ice line
does not touch the equator or the North Pole, i.e.\ $\eta(t)\in (0,1)$ for all $t\geq 0$.

\medskip

\noindent 
\textbf{A}$_4$: For all $X,Z\in\mathbb R$ we have 
\ba
\label{e:00}
&f(0,X,Z)\geq 0 \quad\text{and}\quad  f(1,X,Z)\leq 0,\\
&\sigma(0,X,Z)=  \sigma(1,X,Z) =0.
\ea
In particular, the Lipschitz continuity of $f$ and $\sigma$ together with \eqref{e:00} imply that they are bounded, see 
\eqref{e:f01} and \eqref{e:s01} in Section \ref{s:01}.

\begin{thm}
\label{etain01}
Let Assumptions $\mathbf{A}_1$--$\mathbf{A}_4$ hold true.
Then for any $X_0(\cdot)\in C^2([0,1],\bR)$ and $\eta_0\in (0,1)$,
the unique solution $(X,\eta)$ of \eqref{diffeq} from Theorem \ref{existence} satisfies
\ba
\P_{X_0,\eta_0} \Big(\eta(t)=\eta(t)|_0^1\in (0,1),\ t\geq 0\Big)=1.
\ea
\end{thm}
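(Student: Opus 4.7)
The plan is to show that $\eta$ never reaches $\{0,1\}$ in finite time; by path continuity this immediately yields $\eta(t)\in(0,1)$ and hence $\eta(t)=\eta(t)|^1_0$ for all $t\geq 0$. I will use a standard Feller-type argument based on the logarithmic Lyapunov function
\[
V(\eta):=\log\eta+\log(1-\eta),\qquad \eta\in(0,1),
\]
which satisfies $V\leq 0$ on $(0,1)$ and diverges to $-\infty$ at both endpoints. The essential pathwise boundary estimates come from combining Assumption $\mathbf{A}_4$ with the Lipschitz continuity of $f$ and $\sigma$ from $\mathbf{A}_1$. Denoting a common Lipschitz constant by $L$, one obtains for every $\eta\in[0,1]$ and all $X,Z\in\bR$
\[
|\sigma(\eta,X,Z)|\leq L\min(\eta,1-\eta),\qquad -L\eta\leq f(\eta,X,Z)\leq L(1-\eta).
\]
Crucially, these bounds are uniform in $(X,Z)$, hence they survive evaluation along the random arguments $X=X(s,\eta(s))$, $Z=Z(s)$ that appear in \eqref{diffeq}.

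For $0<\delta<\min(\eta_0,1-\eta_0)$ define the exit time $\tau_\delta:=\inf\{t\geq 0:\eta(t)\leq\delta\text{ or }\eta(t)\geq 1-\delta\}$. On $[0,\tau_\delta]$ the process $\eta$ stays in $[\delta,1-\delta]$, where $V$ is smooth, so Itô's formula applies. The resulting drift
\[
\Big(\tfrac{1}{\eta}-\tfrac{1}{1-\eta}\Big)f-\tfrac{\sigma^2}{2}\Big(\tfrac{1}{\eta^2}+\tfrac{1}{(1-\eta)^2}\Big)
\]
is bounded from below by $-(2L+L^2)=:-C$ using the estimates above, while the diffusion coefficient $\sigma\bigl(\tfrac{1}{\eta}-\tfrac{1}{1-\eta}\bigr)$ has absolute value at most $2L$, so the stochastic integral is a genuine square-integrable martingale with zero expectation. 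Taking expectations yields
\[
\E\, V(\eta(t\wedge\tau_\delta))\geq V(\eta_0)-Ct.
\]

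To close the argument, note that for $\delta<1/2$ we have $V(\eta(t\wedge\tau_\delta))\leq\log\delta$ on $\{\tau_\delta\leq t\}$, and $V\leq 0$ on $\{\tau_\delta>t\}$, so the same expectation is bounded above by $(\log\delta)\,\P(\tau_\delta\leq t)$. Combining the two bounds and rearranging,
\[
\P(\tau_\delta\leq t)\leq\frac{Ct-V(\eta_0)}{|\log\delta|}\longrightarrow 0\quad\text{as}\quad\delta\to 0.
\]
Setting $\tau:=\inf\{t\geq 0:\eta(t)\in\{0,1\}\}$, the inclusion $\{\tau\leq t\}\subseteq\{\tau_\delta\leq t\}$ (which holds because $\{0,1\}\subseteq[0,\delta]\cup[1-\delta,1]$ and therefore $\tau_\delta\leq\tau$) gives $\P(\tau\leq t)=0$ for every $t\geq 0$, so $\tau=+\infty$ almost surely.

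I anticipate no real obstacle; the argument is essentially Feller's test packaged as a Lyapunov estimate. The only subtle point is the uniformity of the boundary bounds in $(X,Z)$, but Assumption $\mathbf{A}_4$ is tailored precisely so that the noise vanishes and the drift points inward at the boundary for \emph{every} $X,Z$, which is exactly what the Lyapunov computation consumes and why no a priori control on $X(\cdot,\cdot)$ is needed.
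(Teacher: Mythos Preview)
Your proof is correct and follows essentially the same approach as the paper: apply It\^o's formula to a logarithmic Lyapunov function and use the boundary estimates $|\sigma(\eta,X,Z)|/\eta\leq L$ and $-f(\eta,X,Z)/\eta\leq L$ (and their mirror images at $\eta=1$), which follow from $\mathbf A_4$ together with the Lipschitz continuity in $\mathbf A_1$. The only cosmetic difference is that the paper treats the two endpoints separately via $\rho(t)=-\ln\eta(t)$ (bounding $\E\sup_{t\le T}\rho(t\wedge\sigma_N)$ with Burkholder and then passing to the limit by Fatou), whereas you combine both boundaries into the single function $V(\eta)=\log\eta+\log(1-\eta)$ and extract the probability bound $\P(\tau_\delta\le t)\le (Ct-V(\eta_0))/|\log\delta|$ directly; both are standard variants of the same Feller-type argument.
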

Since the process $\eta(\cdot)$ does not touch the boundaries $x=0$ and $x=1$, we have $\eta(\cdot)|_0^1\equiv \eta(\cdot)$. In the sequel, the truncation 
will be omitted.

The main result of the paper deals with the asymptotic behaviour of the ice line in the limit of the small parameter $\e$ that parameterizes the 
slow ice line dynamics in comparison to the fast fluctuations of the temperature. To this end, we transform the system 
\eqref{diffeq} into a slow-fast system: for $t\geq 0$ and $\e\in(0,1]$ we consider 
\ba
\label{diffeq-eps} 
X^\e(t,x)&=X_0(x)+ \frac{1}{\e} \int^t_0 F(x,\eta^\e(s),X^\e(s,x),Z^\e(s))\,\di s+
\frac{1}{\sqrt{\e}} \sum_{j=1}^\infty \int^t_0 \Sigma^j(x,\eta^\e(s),X^\e(s,x),Z^\e(s))\, \di B^j(s), \\
Z^\e(t)&=\int_0^1 X^\e(t,z)\,\mu(\di z),\\
\eta^\e(t)&=\eta_0+\int^t_0 f(\eta^\e(s), X^\e(s,\eta^\e(s)),Z^\e(s))\,\di s 
 + \int^t_0 \sigma(\eta^\e(s), X^\e(s,\eta^\e(s)),Z^\e(s))\, \di W(s).
\ea

Under Assumption \textbf{A}$_4$, the functions $f$ and $\sigma$ are bounded. We additionally assume 
the boundedness of the diffusion coefficients in the equation for $X^\e$.

\medskip

\noindent 
\textbf{A}$_5$: The norm $\|\Sigma(\cdot,\cdot,\cdot,\cdot)\|_{l^2}$ is bounded.

\medskip

Next we need assumptions, that guarantee the stability of the solution  $X^\e(t,\cdot)$ over the infinite time horizon $t\geq 0$.

\medskip

\noindent 
\textbf{A}$_6$: 
\label{assFXav}
There is $\delta>0$ such that for all $x,\eta\in [0,1]$ and all $X,Z\in \bR$ we have
\ba
\label{e:asm6}
&F_ X(x,\eta, X,Z)   +  \frac32 \|\Sigma_X(x,\eta, X,Z)\|_{l^2}^2  
\leq -\delta <0.
\ea

\medskip

\noindent 
\textbf{A}$_7$: 
There are $A>0$ and $B,C\geq 0$ such that for all $x,\eta\in [0,1]$ and all $X,Z\in \bR$ we have
\ba
X F(x,\eta,X,Z)\leq -AX^2 +BXZ+C
\ea
and
\ba
\label{e:ABC}
A>\frac{B}{2}(1+ C_\mu^2 ), 
\ea
where 
\ba
C_\mu=M\cdot\tanh(1)^{-1/2},
\ea
see \eqref{e:optSob}. Alternatively, if the signed measure $\mu$ has a bounded density, $\mu(\di x)=\mu(x)\,\di x$, $x\in[0,1]$, then we can take
\ba
C_\mu=  \sup_{x\in[0,1]}|\mu(x)|.
\ea
Hence, in the particular case of the Lebesque measure $\mu(\di x)=\di x$, the condition \eqref{e:ABC} turns into $A>B$.

\begin{thm}
\label{wnormmomentsboundthm}
Let Assumptions $\mathbf{A}_1$--$\mathbf{A}_7$ hold true,  $\eta_0\in (0,1)$ and $X_0\in C^2([0,1],\bR)$.
Then there is a constant $C_{X}(1,2)>0$ such that for all $\e\in(0,1]$
\ba
\label{e:estX1}
& \sup_ {t\geq 0} \E_{X_0,\eta_0} \| X^\e(t,\cdot) \|^2_{W^{1,2}} \leq C_{X}(1,2) \Big(1+\| X_0(\cdot)\|^{ 2}_{W^{1,2}} \Big).
\ea
Moreover, there is $p^*>2$ such that for any $p\in[2,p^*)$ there is a constant 
$C_{\nabla X}(1,p^*)>0$ such that for all $\e\in(0,1]$
\ba
\label{e:estnablaX}
& \sup_ {t\geq 0} \E_{X_0,\eta_0} \| \nabla X^\e(t,\cdot)\|^p_{W^{1,p}} 
\leq C_{\nabla X}(1,p^*) \Big(1+\|\nabla X_0(\cdot)\|^{p}_{W^{1,p}} + \| \nabla X_0(\cdot)\|^{2p^*}_{L^{2p^*}} \Big).
\ea
\end{thm}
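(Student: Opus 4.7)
The plan is a Lyapunov argument based on It\^o's formula applied to $|X^\e(t,x)|^2$, $|\nabla X^\e(t,x)|^q$ and $|\nabla^2 X^\e(t,x)|^p$ for each fixed $x\in[0,1]$, followed by integrating in $x$ and taking expectations. Differentiating \eqref{diffeq-eps} in $x$ (permitted under \textbf{A}$_2$, and using that $\eta^\e$ and $Z^\e$ do not depend on $x$) produces the closed SDEs
\ban
\di\nabla X^\e(t,x)&= \tfrac{1}{\e}\bigl(F_x+F_X\nabla X^\e\bigr)\di t + \tfrac{1}{\sqrt\e}\sum_{j\ge 1}\bigl(\Sigma^j_x+\Sigma^j_X\nabla X^\e\bigr)\di B^j(t),\\
\di\nabla^2 X^\e(t,x)&= \tfrac{1}{\e}\bigl(F_{xx}+2F_{xX}\nabla X^\e+F_{XX}(\nabla X^\e)^2+F_X\nabla^2 X^\e\bigr)\di t+\cdots,
\ean
with analogous Brownian terms. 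All coefficient derivatives are bounded by \textbf{A}$_2$ and $\|\Sigma\|_{l^2}$ by \textbf{A}$_5$; standard localisation handles the martingale property throughout.

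\paragraph{First estimate.} Set $u(t):=\E\|X^\e(t,\cdot)\|^2_{L^2}$ and $v(t):=\E\|\nabla X^\e(t,\cdot)\|^2_{L^2}$. It\^o on $|X^\e|^2$, integration in $x$, and \textbf{A}$_5$, \textbf{A}$_7$ give
\bes
\dot u\le \tfrac{1}{\e}\Bigl(-2A\,u+2B\,\E\Bigl[Z^\e\int_0^1 X^\e\,\di x\Bigr]+C_0\Bigr).
\ees
Since $Z^\e$ is $x$-independent, $|Z^\e|\le M\|X^\e\|_\infty\le C_\mu\|X^\e\|_{W^{1,2}}$ by the Sobolev embedding \eqref{e:optSob}, giving $\E|Z^\e|^2\le C_\mu^2(u+v)$, and Young then bounds the mixed term by $\tfrac{1+C_\mu^2}{2}u+\tfrac{C_\mu^2}{2}v$. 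The coefficient of $u$ thus becomes $-(2A-B(1+C_\mu^2))<0$ by \eqref{e:ABC}. For $v$, It\^o on $|\nabla X^\e|^2$ produces the leading coefficient $2F_X+\|\Sigma_X\|^2_{l^2}\le -2\delta$ by \textbf{A}$_6$, and Young absorbs the cross terms $2\nabla X^\e F_x$ and $2\langle\Sigma_x,\Sigma_X\rangle\nabla X^\e$, yielding $\dot v\le\tfrac{1}{\e}(-\delta v+C_1)$. A weighted combination $u+\gamma v$ with $\gamma>BC_\mu^2/\delta$, together with Gr\"onwall, then delivers \eqref{e:estX1}.

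\paragraph{Second estimate and main obstacle.} For $q\ge 2$, It\^o on $|\nabla X^\e|^q$ gives the leading drift contribution $\tfrac{q}{\e}\bigl(F_X+\tfrac{q-1}{2}\|\Sigma_X\|^2_{l^2}\bigr)|\nabla X^\e|^q$; by \textbf{A}$_6$ this equals $\tfrac{q}{\e}\bigl(-\delta+\tfrac{q-4}{2}\|\Sigma_X\|^2_{l^2}\bigr)|\nabla X^\e|^q$, and the uniform upper bound $K:=\sup\|\Sigma_X\|^2_{l^2}<\infty$ from \textbf{A}$_2$ keeps the bracket strictly negative whenever $q<4+2\delta/K$. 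Fixing $q^*:=4+\delta/K$ and $p^*:=q^*/2=2+\delta/(2K)>2$, every $p\in[2,p^*)$ satisfies $2p<q^*$, so Gr\"onwall yields $\sup_t\E\|\nabla X^\e(t,\cdot)\|^{2p}_{L^{2p}}\le C_3(1+\|\nabla X_0\|^{2p}_{L^{2p}})$. The remaining terms in the It\^o expansion of $|\nabla^2 X^\e|^p$ involve $|\nabla X^\e|^k$ with $k\in\{1,2\}$ and $|\nabla X^\e|^4$ (the latter from $F_{XX}(\nabla X^\e)^2$ and from squaring $\Sigma_{XX}(\nabla X^\e)^2$); Young's inequality with conjugate exponents $p/(p-1)$ and $p/(p-2)$ reduces them to $c|\nabla^2 X^\e|^p+c'|\nabla X^\e|^{2p}+c''$. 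Feeding the $L^{2p}$-bound into the resulting inequality and applying Gr\"onwall gives the bound on $\E\|\nabla^2 X^\e(t,\cdot)\|^p_{L^p}$, after which $\|\nabla X_0\|^{2p}_{L^{2p}}\le 1+\|\nabla X_0\|^{2p^*}_{L^{2p^*}}$ produces a constant uniform in $p\in[2,p^*)$, matching \eqref{e:estnablaX}. The main obstacle will be extracting this non-degenerate window $p^*>2$: \textbf{A}$_6$ is tailored to the $p=2$ scale, and higher-moment It\^o computations incur a factor $\tfrac{p-4}{2}K$ that must remain strictly below $\delta$; combined with the triangular coupling between $\nabla X^\e$ and $\nabla^2 X^\e$ via Young's inequality, the careful accounting of constants is the delicate ingredient.
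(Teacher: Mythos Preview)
Your proposal is correct and follows essentially the same Lyapunov--It\^o strategy as the paper: apply It\^o's formula to $|X^\e|^2$, $|\nabla X^\e|^q$, $|\nabla^2 X^\e|^p$ pointwise in $x$, integrate over $[0,1]$, exploit the dissipativity encoded in \textbf{A}$_6$ and \textbf{A}$_7$, handle cross terms by Young, and close with Gr\"onwall. Your identification of the admissible window $q<4+2\delta/K$ and the choice $p^*=q^*/2$ match the paper's Lemmas~6.1--6.3 almost exactly (the paper takes $p'=4+\delta/(\|\Sigma_X\|_{l^2}^2+1)$ and $p^*=p'/2\wedge 5/2$, the extra $\wedge 5/2$ arising only because it uses the cruder splitting $(a+b)^2\le 2a^2+2b^2$ in the $\nabla^2 X$ computation).

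The one organisational difference worth noting: the paper observes that the variational SDE for $\nabla X^\e$ is \emph{decoupled} from $X^\e$ and $Z^\e$ (only $\eta^\e$ enters, and it sits in $[0,1]$), so it first bounds $\E\|\nabla X^\e(t,\cdot)\|_{L^p}^p$ independently, and only then feeds that bound into the $X^\e$ estimate to control the $Z^\e$-term. You instead couple $u$ and $v$ through a weighted combination $u+\gamma v$ with $\gamma>BC_\mu^2/(2\delta)$. Both routes are valid; the paper's sequential argument is slightly cleaner because it avoids having to tune $\gamma$, while your coupled Lyapunov function makes the interaction between the two norms more explicit.
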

To prepare the main result, for $\eta\in[0,1]$, $X_0(\cdot)\in C^2([0,1],\bR)$ and $t\geq 0$ we define the frozen processes $\xi^\eta$ and $\zeta^\eta$ 
as solutions of the differential system
\ba
\label{diffeqfrozeneta}
\xi^\eta(t ,x)
&=X_0 (x)+ \int^t_0 F(x,\eta,\xi^\eta (s,x),\zeta^\eta(s))\,\di s
+ \sum_{j=1}^\infty\int_0^t \Sigma^j(x,\eta,\xi^\eta (s,x),\zeta^\eta(s))\, \di B^j(s), \\
\zeta^\eta(t)&=\int_0^1 \xi^\eta (t,z)\,\di z.
\ea
Existence, uniqueness and properties of solutions of \eqref{diffeqfrozeneta} follow from Theorems~\ref{existence} and \ref{etain01} 
with $\e=1$, $f=0$ and $\sigma=0$.

To apply the averaging technique we impose the following assumptions to obtain the weak convergence.
 
\medskip

\noindent
\textbf{A}$_\text{w}$: 
\label{assfhatavwc}
For any $\eta \in [0,1]$ the process $(\xi^\eta(t ,\eta),\zeta^\eta(t))_{t\geq 0}$ is ergodic with the unique 
stationary distribution $\rho^\eta(\di \xi,\di \zeta)$.
Further, for any continuous bounded function $\phi\colon  [0,1]\times\bR\times\bR\to\bR$ and any $X_0(\cdot)\in C^2([0,1],\bR)$ it holds
\ban
\sup_{\eta\in[0,1]}
\E_{X_0,\eta}\Big|\frac{1}{T} \int_0^{T}   \phi (\eta,\xi^{\eta}(s,\eta),\zeta^{\eta} (s))\, \di s
-\iint_{\bR^2}  \phi(\eta,\xi,\zeta)\rho^\eta (\di \xi,\di \zeta) \, \Big|
< \gamma(T)\Big(1+\|X_0(\cdot)\|_{W^{1,2}}\Big),
\ean
where $\gamma(T) \rightarrow  0$ as $T \rightarrow \infty$. 
Let
\ba
\label{e:fsigma}
\widehat f( \eta )&:= \iint_{\bR ^2}  f(\eta, \xi, \zeta)\rho^\eta(\di \xi,\di \zeta), \\
\widehat \sigma^2( \eta )&:= \iint_{\bR ^2}  \sigma^2(\eta, \xi, \zeta)\rho^\eta (\di \xi,\di \zeta),
\ea
and assume that $\widehat f( \cdot)$ and $\widehat\sigma( \cdot )$ are Lipschitz continuous (see Section \ref{s:example} for the concrete example).
 
\begin{thm}
\label{averagingwc}
Let 
assumptions $\mathbf{A}_1$--$\mathbf{A}_7$ and $\mathbf{A}_\text{\emph{w}}$ be satisfied.
Then for any $\eta_0\in (0,1)$ and $X_0(\cdot)\in C^2([0,1],\bR)$ 
\ba
\eta^\e(\cdot) \Rightarrow \widehat \eta(\cdot),\quad \e\to 0,
\ea
in $C([0,1],\bR)$, where $\widehat \eta(\cdot)$ is the unique solution of the SDE
\ba
\label{e:SDEhateta}
\widehat \eta(t)&=\eta_0+\int^t_0 \widehat f( \widehat \eta(s) )\,\di s + \int^t_0 \widehat \sigma(\widehat\eta(s) )\,  \di W(s),
\ea
with the coefficients $\widehat f$ and $\widehat \sigma$ defined in \eqref{e:fsigma}.
\end{thm}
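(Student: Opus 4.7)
The plan is the classical three-step Khasminskii-Freidlin-Wentzell averaging scheme: (i) tightness of $\{\eta^\e\}_{\e\in(0,1]}$ in $C([0,1],\bR)$; (ii) identification of each subsequential limit as a solution of the martingale problem for the generator of \eqref{e:SDEhateta}; (iii) uniqueness of this limit. Tightness is immediate: by Assumption $\mathbf{A}_4$ together with $\mathbf{A}_1$, the coefficients $f$ and $\sigma$ are bounded, so the Burkholder-Davis-Gundy inequality and Hölder give $\E_{X_0,\eta_0}|\eta^\e(t)-\eta^\e(s)|^{4}\leq C|t-s|^{2}$ uniformly in $\e$, and Kolmogorov's continuity criterion closes this step.

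For the identification step, fix $g\in C_b^{2}(\bR)$ and apply It\^o's formula to $g(\eta^\e(\cdot))$ to obtain
\ba
g(\eta^\e(t))-g(\eta_0)=\int_0^t \Big[g'(\eta^\e(s))\, f(\eta^\e(s),X^\e(s,\eta^\e(s)),Z^\e(s))+\tfrac{1}{2}g''(\eta^\e(s))\,\sigma^{2}(\eta^\e(s),X^\e(s,\eta^\e(s)),Z^\e(s))\Big]\,\di s + M^\e(t),
\ea
with $M^\e$ a continuous martingale. Recovering the averaged generator $\widehat{\mathcal L}g=\widehat f\, g'+\tfrac{1}{2}\widehat\sigma^{2}\, g''$ reduces to showing, for each $\phi\in\{g'f,\tfrac{1}{2}g''\sigma^{2}\}$,
\ba
\E_{X_0,\eta_0}\Big|\int_0^t \phi(\eta^\e(s),X^\e(s,\eta^\e(s)),Z^\e(s))\,\di s - \int_0^t \widehat\phi(\eta^\e(s))\,\di s\Big|\xrightarrow{\e\to 0} 0.
\ea
I would perform this through Khasminskii's block discretization: partition $[0,t]$ into intervals of length $\Delta=\Delta(\e)$ with $\Delta\to 0$ and $\Delta/\e\to\infty$, and on each block $[t_k,t_{k+1}]$ freeze $\eta^\e$ at its left endpoint $\eta^\e(t_k)$. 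The rescaling $u=(s-t_k)/\e$ in \eqref{diffeq-eps} identifies the restriction of $X^\e$ to this block with the frozen process $\xi^{\eta^\e(t_k)}$ started from $X^\e(t_k,\cdot)$, so Assumption $\mathbf{A}_\text{w}$ yields
\ba
\E\Big|\tfrac{1}{\Delta}\int_{t_k}^{t_{k+1}}\phi(\eta^\e(t_k),X^\e(s,\eta^\e(t_k)),Z^\e(s))\,\di s - \widehat\phi(\eta^\e(t_k))\Big|\leq \gamma(\Delta/\e)\Big(1+\E\|X^\e(t_k,\cdot)\|_{W^{1,2}}\Big),
\ea
which is uniform in $k$ thanks to the moment bound \eqref{e:estX1}.

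The main obstacle is precisely the freezing step, namely replacing the composition $X^\e(s,\eta^\e(s))$ by $X^\e(s,\eta^\e(t_k))$ inside the coefficients. The Lipschitz continuity of $f$ (and of $\sigma^2$) gives
\ba
\big|f(\eta^\e(s),X^\e(s,\eta^\e(s)),Z^\e(s))-f(\eta^\e(t_k),X^\e(s,\eta^\e(t_k)),Z^\e(s))\big|\leq \text{Lip}(f)\big(1+\|\nabla X^\e(s,\cdot)\|_\infty\big)\,|\eta^\e(s)-\eta^\e(t_k)|,
\ea
so what is required are uniform moment bounds on $\|\nabla X^\e(s,\cdot)\|_\infty$. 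These are exactly supplied by the Sobolev embedding Theorem~\ref{t:Sob} with $k=1$ and $p=p^*>2$ applied to the bound \eqref{e:estnablaX} of Theorem~\ref{wnormmomentsboundthm}; combined with the $\sqrt{\Delta}$-bound on the increments of $\eta^\e$ (from boundedness of $f,\sigma$), this yields the required smallness after summation over blocks.

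Finally, the Lipschitz continuity of $\widehat f$ and $\widehat\sigma$ imposed in $\mathbf{A}_\text{w}$ gives pathwise, hence weak, uniqueness for the limit SDE \eqref{e:SDEhateta}. Combined with tightness and the martingale problem identification above, this yields $\eta^\e\Rightarrow \widehat\eta$ in $C([0,1],\bR)$.
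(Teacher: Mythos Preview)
Your overall architecture matches the paper's: tightness from the fourth-moment increment bound, identification via the martingale problem for $\widehat{\mathcal L}$, Khasminskii block discretisation, and uniqueness from the Lipschitz assumptions in $\mathbf{A}_\text{w}$.

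However, there is a genuine gap in your freezing step. You write that ``the rescaling $u=(s-t_k)/\e$ identifies the restriction of $X^\e$ to this block with the frozen process $\xi^{\eta^\e(t_k)}$''. This is false: on $[t_k,t_{k+1}]$ the process $X^\e$ still solves \eqref{diffeq-eps} with the \emph{moving} $\eta^\e(s)$ inside $F$ and $\Sigma$, not with $\eta^\e(t_k)$. The equality in law \eqref{e:in_law} holds only for an auxiliary process $\widetilde X^\e$ (see \eqref{diffeqXtildeetatilde}) in which $\eta$ is frozen at $\eta^\e(k\Delta)$ \emph{inside the fast dynamics}, and one then needs the comparison estimate \eqref{diffXXtilde},
\[
\sup_{t\geq 0}\sup_{x\in[0,1]}\E|X^\e(t,x)-\widetilde X^\e(t,x)|^2\leq C\sqrt{\e},
\]
together with the corresponding bound for $|Z^\e-\widetilde Z^\e|$. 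You only address the freezing of the \emph{evaluation point}, i.e.\ replacing $X^\e(s,\eta^\e(s))$ by $X^\e(s,\eta^\e(t_k))$, and your gradient/Sobolev argument for that part is correct; but the second, dynamical freezing is missing entirely.

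This omission matters because the Gronwall argument proving \eqref{diffXXtilde} produces a factor $\exp\!\big(C(\Delta^2/\e^2+\Delta/\e)\big)$, so your hypotheses $\Delta\to 0$ and $\Delta/\e\to\infty$ alone are insufficient: one needs the sharper condition \eqref{Deltabehavior}, for instance $\Delta(\e)=\e\,(\ln(1+1/\e))^{1/4}$. Without this, the block errors do not sum.
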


If the diffusion coefficient $\sigma$ in \eqref{diffeq-eps} depends only on $\eta$, $\sigma=\sigma(\eta)$, we can also establish the strong convergence.

\medskip

\noindent
\textbf{A}$_\text{s}$: 
There is a Lipschitz continuous function $\widehat f\colon [0,1]\to \bR$ such that for any $\eta\in [0,1]$, $X_0(\cdot)\in C^2([0,1],\bR)$
\ba
\label{e:fw}
\E_{X_0,\eta}\Big|\frac1T \int_0^T f (\eta,\xi^\eta(s,\eta),\zeta^\eta (s))\,\di s-\widehat f(\eta) \Big| 
< \gamma(T)\Big(1+\|X_0(\cdot)\|_{W^{1,2}}\Big),
\ea
where $\gamma(T) \rightarrow  0$ as $T \rightarrow \infty$. 
 
\begin{thm}
\label{averagingsc}
Let $\sigma=\sigma(\eta)$, and let
assumptions $\mathbf{A}_1$--$\mathbf{A}_7$ and $\mathbf{A}_\text{\emph{s}}$ be satisfied.
Let $\eta_0\in (0,1)$ and $X_0(\cdot)\in C^2([0,1],\bR)$. 
Then for any $T>0$ and $\delta>0$ we have 
\ba
\label{diffetatildeetahat}
\P_{X_0,\eta_0}\Big( \sup_{ t\in [0,T]  }|\eta^\e(t)-\widehat\eta(t)|>\delta\Big)\to 0,\quad \e\to 0,
\ea
where
$\widehat\eta$ is the 
unique solution of the SDE
\ba
\label{e:SDEeta}
\widehat \eta(t)&=\eta_0+\int^t_0 \widehat f( \widehat \eta(s) )\,\di s + \int^t_0 \sigma(\widehat\eta(s) )\,  \di W(s),
\ea
with $\widehat f$ defined in \eqref{e:fw}.
\end{thm}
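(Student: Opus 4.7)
The approach I would take is a Khasminskii-type time-discretization argument, made simpler here by the fact that $\sigma = \sigma(\eta)$ depends only on the slow variable, so only the drift $f$ requires averaging against the frozen invariant measure. Setting $D^\e(t) := \eta^\e(t) - \widehat\eta(t)$, the first step is the decomposition
\begin{align*}
D^\e(t)
&= \int_0^t \bigl[\widehat f(\eta^\e(s)) - \widehat f(\widehat\eta(s))\bigr]\,\di s
+ \int_0^t \bigl[\sigma(\eta^\e(s)) - \sigma(\widehat\eta(s))\bigr]\,\di W(s) + R^\e(t),
\end{align*}
where the averaging residual is $R^\e(t) := \int_0^t [f(\eta^\e(s), X^\e(s,\eta^\e(s)), Z^\e(s)) - \widehat f(\eta^\e(s))]\,\di s$. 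Lipschitz continuity of $\widehat f$ (from $\mathbf{A}_\text{s}$) and of $\sigma$ (from $\mathbf{A}_1$), combined with the Burkholder--Davis--Gundy inequality and Gronwall's lemma, reduce the claim to $\E\sup_{t\in[0,T]}|R^\e(t)|^2 \to 0$, from which \eqref{diffetatildeetahat} follows by Markov's inequality.

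For the averaging residual, I would partition $[0,T]$ into blocks $I_k = [t_k, t_{k+1}]$ of length $\Delta = \Delta(\e)$ with $\Delta \downarrow 0$ and $\Delta/\e \uparrow \infty$ (e.g.\ $\Delta = \sqrt{\e}$), and on each block freeze the ice line at $\bar\eta_k := \eta^\e(t_k)$. The auxiliary ``frozen-$\eta$'' process $\tilde X^{\e,k}$ is defined by the same equation as $X^\e$ in \eqref{diffeq-eps}, driven by the same Brownian motions, started at $\tilde X^{\e,k}(t_k,\cdot) = X^\e(t_k,\cdot)$, but with $\eta^\e(\cdot)$ replaced by the constant $\bar\eta_k$; after the time change $s = t_k + \e u$ its law coincides with that of the frozen system $\xi^{\bar\eta_k}(u,\cdot)$ of \eqref{diffeqfrozeneta} started at $X^\e(t_k,\cdot)$. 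A Gronwall-type estimate based on the Lipschitz dependence of $F$ and $\Sigma$ on $\eta$ (assumption $\mathbf{A}_1$) and the crude modulus $\E\sup_{s \in I_k}|\eta^\e(s) - \bar\eta_k|^2 = O(\Delta)$ (from boundedness of $f$ and $\sigma$ under $\mathbf{A}_4$) controls $\E\sup_{s \in I_k}\|X^\e(s,\cdot) - \tilde X^{\e,k}(s,\cdot)\|_{W^{1,2}}^2$. Using the triangle inequality
\begin{equation*}
|X^\e(s,\eta^\e(s)) - \tilde X^{\e,k}(s,\bar\eta_k)| \leq \|X^\e(s,\cdot) - \tilde X^{\e,k}(s,\cdot)\|_\infty + \|\nabla \tilde X^{\e,k}(s,\cdot)\|_\infty \cdot |\eta^\e(s) - \bar\eta_k|,
\end{equation*}
together with Lipschitz continuity of $f$ and $\mathbf{A}_3$, one may replace $R^\e$ up to negligible error by $\sum_k\int_{I_k}[f(\bar\eta_k,\tilde X^{\e,k}(s,\bar\eta_k),\tilde Z^{\e,k}(s)) - \widehat f(\bar\eta_k)]\,\di s$. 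Rescaling time on each block transforms the $k$-th integral into $\e\int_0^{\Delta/\e}[f(\bar\eta_k,\xi^{\bar\eta_k}(u,\bar\eta_k),\zeta^{\bar\eta_k}(u)) - \widehat f(\bar\eta_k)]\,\di u$, and conditioning on $\rF_{t_k}$ (to treat $X^\e(t_k,\cdot)$ as a deterministic initial condition) together with $\mathbf{A}_\text{s}$ applied at horizon $T = \Delta/\e$ bounds its $L^1$ norm by $\Delta \cdot \gamma(\Delta/\e) \cdot (1 + \E\|X^\e(t_k,\cdot)\|_{W^{1,2}})$, uniformly small after summation over $k$ thanks to the stability estimate \eqref{e:estX1}.

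The principal difficulty, and the reason for the specific shape of Theorem~\ref{wnormmomentsboundthm}, lies in the composition term $X^\e(s,\eta^\e(s))$: the random ice line sits inside the non-averaged drift $f$, so purely $L^p$ bounds on $X^\e$ do not suffice, and one genuinely needs pointwise control of the gradient at a moving random location. This is precisely where the gradient estimate \eqref{e:estnablaX} with some $p > 2$ is indispensable: combined with the Sobolev embedding $W^{1,p}\hookrightarrow C([0,1])$ of Theorem~\ref{t:Sob}, it delivers a uniform-in-$\e$ bound on $\E\|\nabla \tilde X^{\e,k}(s,\cdot)\|_\infty$, forcing the second term in the triangle inequality above to be of order $\Delta^{1/2}$. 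Balancing the discretization scale (e.g.\ $\Delta = \sqrt{\e}$) so that the modulus of continuity of $\eta^\e$, the frozen-process comparison, and the ergodic rate $\gamma(\Delta/\e)$ all vanish simultaneously as $\e \to 0$, and closing with Gronwall on $\E\sup_{r\in[0,t]}|D^\e(r)|^2$, yields \eqref{diffetatildeetahat}.
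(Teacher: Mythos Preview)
Your approach is essentially the same Khasminskii-type scheme the paper uses: introduce a block-frozen auxiliary fast process, reduce to an averaging residual via Lipschitz/Gronwall on the slow variable, and invoke $\mathbf{A}_\text{s}$ on each block after the time change $s\mapsto s/\e$. The paper packages this into the intermediate processes $\widetilde X^\e$, $\bar\eta^\e$ and Lemmas \ref{lemdiffetaetatilde} and \ref{l:ff}, but the logic is the same.

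There is, however, one genuine gap in your proposal: the choice $\Delta=\sqrt{\e}$ does not work for the frozen-process comparison. The equation for $X^\e$ carries the factors $1/\e$ and $1/\sqrt{\e}$, so after squaring and applying Gronwall on a block of length $\Delta$ you pick up an exponential factor of order $\exp\!\big(C(\Delta^2/\e^2+\Delta/\e)\big)$; see the paper's estimate \eqref{diffXXtilde} and the condition \eqref{Deltabehavior}. With $\Delta=\sqrt{\e}$ this exponent is of order $1/\e$ and the bound explodes. The paper resolves this by taking $\Delta/\e\to\infty$ only logarithmically, e.g.\ $\Delta(\e)=\e\sqrt[4]{\ln(1+1/\e)}$, so that the exponential stays polynomially controlled while still forcing $\gamma(\Delta/\e)\to 0$. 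A secondary point: rather than bounding $\|X^\e-\tilde X^{\e,k}\|_\infty$ via a $W^{1,2}$ estimate on the difference (which would require controlling $\nabla X^\e-\nabla\tilde X^{\e,k}$ and face the same exponential issue), the paper splits the composition as $|X^\e(s,\eta^\e(s))-X^\e(s,\eta^\e(k\Delta))|+|X^\e(s,\eta^\e(k\Delta))-\widetilde X^\e(s,\eta^\e(k\Delta))|$, using $\|\nabla X^\e\|_\infty$ (via \eqref{e:estnablaX} and Sobolev) for the first term and only a pointwise-in-$x$ estimate for the second. With this decomposition and the corrected choice of $\Delta$, your argument goes through.
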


\section{Example: Stochastic Widiasih's EBM with a moving ice line\label{s:example}}

In this section we fit the deterministic EMB \eqref{e:EBM1}, \eqref{e:EBM-eta} 
into our stochastic setting \eqref{diffeq-eps}. 

First we discuss the equation for $X^\e(\cdot,\cdot)$ and $Z^\e(\cdot)=\int_0^1 X^\e(\cdot,z)\,\mu(\di z)$.

We note that $\mu(\di z)=\di z$, so that $M=1$, and Assumption $\mathbf{A}_3$ is satisfied.

Using the data from Widiasih \cite{widiasih2013dynamics} and Walsh and Rackauckas \cite{walsh2015budyko} we get:
\ba
\label{e:F}
F(x,\eta,X,Z)&=\frac{1}{R}\Big(Q s(x)(1-\alpha(x,\eta))- (a+bX )-c( X -Z)\Big)\\
\ea
where 
\ba
R&=12.6,\quad Q=343,\\
s(x)&=1+\frac{s_2}{2}(3x^2-1),\quad s_2=-0.482,\\
\alpha(x,\eta)&= \frac{\alpha_s+\alpha_w}{2}  + \frac{\alpha_s-\alpha_w}{2} \tanh(K(x-\eta)),\\
\alpha_w&=0.32,\quad \alpha_s = 0.62,\quad K=25,\\
 a &= 202, \quad b = 1.9, \quad c = 3.04.
\ea
In order to use the notation of Assumption $\mathbf{A}_7$ rewrite \eqref{e:F} as
\ba
F(x,\eta,X,Z)=-A X +B Z + h(x,\eta)
\ea
with a bounded function 
\ba
h(x,\eta)&=\frac1R\Big(Q s(x)(1-\alpha(x,\eta))- a\Big).
\ea
and the constants
\ba
A&=\frac{b+c}{R},\quad  B=\frac{c}{R}.
\ea
Hence, $F$ satisfies Assumptions $\mathbf{A}_1$ and $\mathbf{A}_2$.

Further we assume that $X$ is driven by only one Brownian motion $B(\cdot)$, and 
the diffusion coefficient $\Sigma= \Sigma(x,\eta)$ does not depend on $X$ and $Z$, and satisfies Assumptions $\mathbf{A}_1$, 
$\mathbf{A}_2$ and $\mathbf{A}_5$. In our numeric example we set
\ba
\Sigma(x,\eta)&:=\frac{2+\eta}{1+x^2}.
\ea
Since $F_X=-A$ and $\Sigma_X=0$, Assumption $\mathbf{A}_6$ is satisfied.

For any $\kappa\in (0,b/R)$ we estimate 
\ba
X F(x,\eta,X,Z)&=-A X^2 +B X Z + X h(x,\eta)= -(A-\kappa) X^2 + BXZ -\kappa X^2  + X h(x,\eta)\\
&\leq -(A-\kappa) X^2 + BXZ +\max_{X}\Big(-\kappa X^2  + |X|\cdot  \|h(\cdot,\cdot)\|_\infty\Big),
\ea
so that  $\mathbf{A}_7$ is satisfied, too, with $A-\kappa$ taken instead of $A$.

Now we inspect the equation for $\eta^\e$ in \eqref{diffeq-eps}.
We note that in the equation \eqref{e:EBM-eta} in the deterministic model, the drift term $f(\eta,X,Z)=X-X_\text{critical}$
does not satisfy Assumption $\mathbf{A}_4$.
Hence we modify the function $f$ in such a way that $f(\eta,X,Z)\approx X-X_\text{critical}$ away from the boundaries $x=0$ and $x=1$, 
and $f(0,X,Z)> 0$, $f(1,X,Z)< 0$.
More precisely, we set 
\ba
f(\eta,X,Z)=f(\eta,X)&:=-\kappa\Big(\eta -\frac12\Big) +  K \Big(1-\ex^{-K\eta (1-\eta )}\Big) 
\arctan\Big(\frac{X-X_\text{critical}}{K}\Big),\\
\quad K&=25,\ \kappa=0.1,\  X_\text{critical}=-10^\circ \text{C}.
\ea
In our numerical example we set
\ba
\sigma(\eta,X,Z)=\sigma(\eta)&:= \eta(1-\eta).
\ea
The functions $f$ and $\sigma$ satisfy Assumptions $\mathbf{A}_1$ and $\mathbf{A}_4$.

The frozen processes \eqref{diffeqfrozeneta} take the form
\begin{align}
\label{e:xi}
\xi^\eta(t,x)&= X_0(x) +  \int^t_{0}\Big( -A \xi^\eta (s,x)+ B \zeta^\eta(s) + h(x,\eta )\Big)\,\di s
 + \Sigma(x,\eta)  B(t),\\
\label{e:barxi}
\zeta^\eta(t)&= \int_0^1 \xi^\eta(t,z)\,\di z
 = \bar{X}_0+  \int^t_{0}\Big( -(A-B) \zeta^\eta(s) + \bar h (\eta)\Big)\,\di s+ \bar{\Sigma}(\eta)  B(t),
\end{align}
where the following notation is used:
\ba
\bar h (\eta) =\int_0^1 h (\eta,z)\,\di z ,\quad \bar{\Sigma}(\eta)   =\int_0^1 \Sigma (\eta,z)\,\di z, \quad \bar{X}_0   =\int_0^1 X_0 ( z)\,\di z.
\ea
The system \eqref{e:xi}, \eqref{e:barxi} can be solved in the closed form.
The process $\zeta^\eta$ is an Ornstein--Uhlenbeck process whose solution can be written explicitly:
\ba
\label{e:zeta}
\zeta^\eta(t)
&=\bar{X}_0\ex^{-(A-B)t} + \bar{h}(\eta)\frac{1-\ex^{-(A-B)t}}{A-B}+ \bar\Sigma(\eta)\int_0^t \ex^{-(A-B)(t-s)}\,\di B(s) . 
\ea
After substituting $\zeta^\eta$ into \eqref{e:xi}, we solve linear equation \eqref{e:xi} w.r.t.\ $\xi^\eta$ to obtain  
\ba
\xi^\eta(t,x)&= X_0\ex^{-At} 
+ \int_0^t \ex^{-A(t-s)} \Big( -B \zeta^\eta(s) +h(\eta,x)\Big)   \,\di s
+ \Sigma(\eta,x) \int_0^t \ex^{-A(t-s)}  \,\di B(s).
\ea
Applying the stochastic Fubini theorem we establish the equality
\ba
\int_0^t \ex^{-A(t-s)} \Big[ \int_0^s \ex^{-(A-B)(s-r)}\,\di B(r) \Big]    \,\di s
= \frac{1}{B}\int_0^t \Big(\ex^{-(A-B)(t-s)} -  \ex^{-A(t-s)}  \Big)\,\di B(s).
\ea
After calculating the integrals we finally obtain the solution:
\ba
\xi^\eta(t,x)=X_0(x)\ex^{-At} &+ h(\eta,x) \frac{1-\ex^{-At}}{A}-\Big(\bar{X}_0-\frac{\bar{h}(\eta)}{A-B}\Big)\ex^{-At}(1-\ex^{Bt})
+\frac{B}{A-B}\bar{h}(\eta)\frac{1-\ex^{-At}}{A}\\
& + \Sigma(\eta,x)\int_0^t \ex^{-A(t-s)}\,\di B(s) - \bar\Sigma(\eta)\int_0^t \Big( \ex^{-A(t-s)}-  \ex^{-(A-B)(t-s)}    \Big)\,\di B(s).
\ea
It is clear that for each $\eta\in[0,1]$ the process $(\xi^\eta(t,\eta),\zeta^\eta(t))_{t\geq 0}$ is Gaussian, and
\ba
(\xi^\eta(t,\eta),\zeta^\eta(t))\stackrel{\di}{\to} (\xi^\eta_\infty,\zeta^\eta_\infty), 
\ea
where $(\xi^\eta_\infty,\zeta^\eta_\infty)$ has the bivariate Gaussian distribution $\rho^\eta(\di\xi,\di \zeta)$ with the
following characteristics:
\ba
\E \xi^\eta_\infty &= \frac{h(\eta,\eta)}{A}+ \frac{B}{A-B}\frac{\bar{h}(\eta) }{A},\\
\E \zeta^\eta_\infty & = \frac{\bar{h}(\eta) }{A-B},\\
\Var \xi^\eta_\infty&=\frac{(\Sigma(\eta,\eta)-\bar\Sigma(\eta))^2}{2A} 
+  \frac{2(\Sigma(\eta,\eta)-\bar\Sigma(\eta))\bar\Sigma(\eta)}{2A-B}  +\frac{\bar\Sigma(\eta)^2}{2(A-B)},\\
\Var\zeta^\eta_\infty&=\frac{\bar\Sigma(\eta)^2}{2(A-B)}, \\
\Cov( \xi^\eta_\infty,\zeta_\infty^\eta)
&= \frac{(\Sigma(\eta,\eta)-\bar\Sigma(\eta))\bar\Sigma(\eta)}{2A-B}   +\frac{\bar\Sigma(\eta)^2}{2(A-B)}.
\ea

Since the diffusion coefficient $\sigma=\sigma(\eta)$ does not depend on $Z$, we will apply Theorem \ref{averagingsc} and establish strong convergence 
of $\eta^\e$. 
Since the drift $f=f(\eta,X)$ does not depend on $Z$, in Assumption \textbf{A}$_\text{s}$
and in the formulas \eqref{e:fsigma} it is sufficient to take expectations with respect to the probability 
law of the process $\xi^\eta(\cdot,\eta)$ and of the limit 
$\xi^\eta_\infty$, respectively. Note that under our assumptions the probability laws of these random variables are not degenerate.

A straightforward integration of the drift $f$
with respect to the Gaussian density of $\xi^\eta(t,\eta)$
shows that Assumption \textbf{A}$_\text{s}$ holds true:
there are constants $c_1,c_2>0$ such that for all $\eta\in[0,1]$ and $t\geq 0$
\ba
\label{konfrozeneta}
\E_{X_0,\eta}\Big|\frac{1}{T} \int_0^{T} f(\eta,\xi^\eta(s,\eta))\,\di s
-\widehat f(\eta) \Big|
&\leq \frac{c_1}{T+1}\Big(1+\|X_0(\cdot)\|_\infty\Big)\\
&\leq \frac{c_2}{T+1}\Big(1+\|X_0(\cdot)\|_{W^{1,2}}\Big),
\ea
where
\ba
\widehat f (\eta)= \E f(\eta,\xi^\eta_\infty)= 
\int_\bR   \frac{ f(\eta,\xi)}{\sqrt{2\pi \Var\xi^\eta_\infty}}\exp\Big(-\frac{(\xi- \E \xi^\eta_\infty )^2}{ 2\Var \xi^\eta_\infty}\Big)\,\di \xi.
\ea
It is easy to see that $\eta\mapsto\widehat f(\eta)$ is Lipschitz continuous. 
Hence, Theorem \ref{averagingsc} yields that 
\ba
\eta^\e\stackrel{\text{u.c.p.}}{\to} \widehat\eta,\quad \e\to 0,
\ea
where $\widehat \eta$ is a solution of \eqref{e:SDEeta}.

On Fig.~\ref{f:1} we depict the function $\widehat f$ for various values of the mean solar incoming radiation $Q$.
\begin{figure}
\begin{center}
\includegraphics[width=0.25\textwidth]{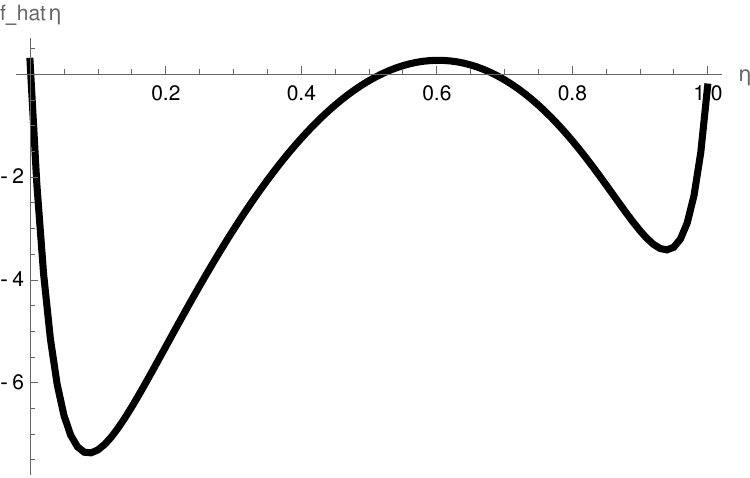}\hspace{1cm}
\includegraphics[width=0.25\textwidth]{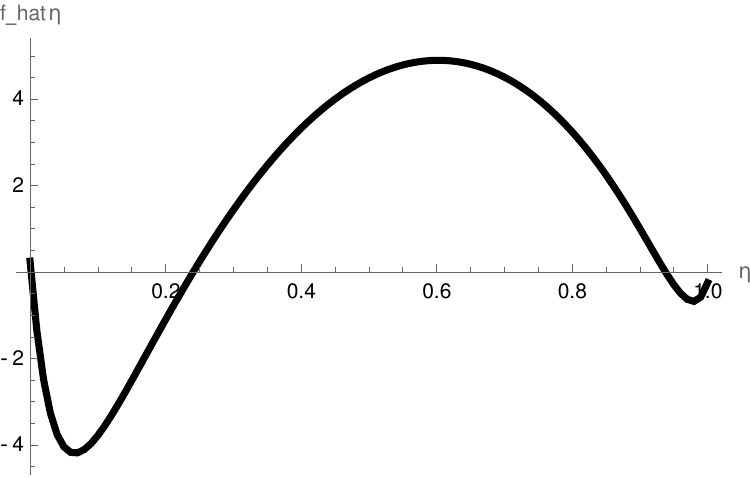}\hspace{1cm}
\includegraphics[width=0.25\textwidth]{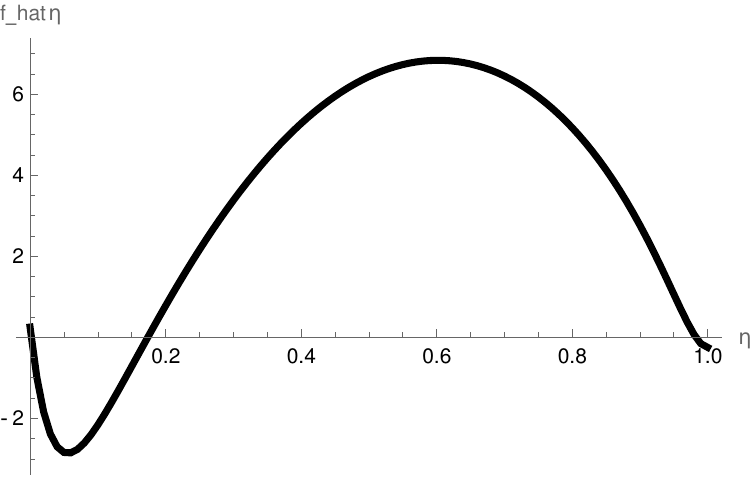}
\end{center}
\caption{The drift coefficient $\widehat f$ for 
the incoming solar radiation $Q=327\, W/m^2$ (left), $Q=343\,  W/m^2$, i.e.\ for present climate conditions (middle), and $Q=350\,  W/m^2$ (right). 
\label{f:1}} 
\end{figure}
It is instructive to notice that under our assumptions, the limit dynamics of $\widehat \eta$ is confined to the interval $(0,1)$. The 
SDE \eqref{e:SDEeta} has the stationary distribution 
\ba
\widehat p_\text{st}(\eta)=\frac{N}{\sigma(\eta)^2}\exp\Big( 2\int_{1/2}^\eta \frac{\widehat f(u)}{\sigma(u)^2}\,\di u\Big),\quad \eta\in (0,1),
\ea
$N>0$ being the normalizing constant.
The explicit formula for the stationary density allows to determine the most probable metastable 
locations of the ice line $\widehat\eta$ as well as its mean value.
One can also calculate the mean transition times between the metastable climate states.

\section{Proof of Theorem \ref{existence}\label{s:existence}}

\subsection{Solution of the truncated system}

Let 
$x\mapsto X(x)$ be a $C^1$-function and let $\mu$ be a finite signed measure. Then with the help of Fubini's theorem we 
obtain the following formula:
\ba
\label{e:XXx}
\int^1_0  X(z)\, \mu (\di z)&=\int_{[0,1]} \Big(  X(0)  +\int_{[0,1]}\bI_{[0,z]}(y) \nabla  X(y)\, \di y\Big)\, \mu (\di z)\\
&= X(0)\mu ([0,1]) + \int_{[0,1]}  \nabla  X(y)\Big[ \int_{[0,1]}  \bI_{[y,1]}(z) \, \mu (\di z)\Big]\, \di y\\
&= X(0)\mu ([0,1]) + \int_{[0,1]}  \nabla  X(y)\mu([y,1])\, \di y.
\ea
In case of absolutely continuous $\mu$ with a bounded density $\mu(z)$ we have
\ba
\label{e:Z1}
\Big|\int_0^1 X(z)\mu(z)\,\di z\Big|^2\leq \sup_{z\in[0,1]}|\mu(z)|^2 \cdot \int_0^1 |X(z)|^2\, \di z 
\leq \sup_{z\in[0,1]}|\mu(z)|^2 \cdot
\|X(\cdot)\|_{W^{1,2} }^2.
\ea
In the case of arbitrary finite signed measure $\mu$ we estimate with the help of the Sobolev embedding theorem~\ref{t:Sob}: 
\ba
\label{e:Z2}
\Big| \int_0^1 X(z)\mu(\di z)\Big|^2 
\leq M^2\cdot \|X(\cdot)\|_\infty^2 
\leq M^2 \cdot C_\text{Sob}(1,2)^2 \cdot \|X(\cdot)\|_{W^{1,2} }^2.
\ea
We unite the estimate \eqref{e:Z1} and \eqref{e:Z2} in one estimate with the constant $C_\mu$:
\ba
\label{e:ZZZ}
\Big|\int_0^1 X(z)\,\mu(\di z)\Big|^2\leq C_\mu^2\cdot \|X(\cdot)\|_{W^{1,2} }^2.
\ea
As it was explained in the Introduction, one of the difficulties of the analysis of system \eqref{diffeq}
consists in considering the composition $X(\cdot,\eta(\cdot))$. It is clear, that the random 
field $x\mapsto X(\cdot,x)$ has to be   
regular enough, e.g., Lipschitz continuous with a Lipschitz constant bounded in a 
certain sense in order to guarantee the existence and uniqueness of the solution $(X,\eta)$. 

To overcome this difficulty, first we modify the system \eqref{diffeq} into a system in which the gradient $\nabla X$ is bounded.
Recall that the gradient $\nabla X$ can be obtained by formal differentiation of the SDE for $X$. It  
satisfies the variational equation
\ba
\nabla X(t,x)&=\nabla  X_0(x) +\int_0^t  \Big( F_x +  F_ X  \nabla X(s,x)\Big) \,\di s 
+ \sum_{j=1}^\infty \int_0^t  \Big( \Sigma^j_x+  \Sigma^j_X \nabla X(s,x)\Big)\, \di B^j(s).
\ea
We set up an auxiliary system of equations with an unknown function $Y^\phi$ standing for the
gradient $\nabla X$ truncated by a bounded cut-off function $\phi\colon \bR\to\bR$ such that $\phi(x)=x$ on a large interval
around zero.

Then the following modifications will be made. First, we will evaluate the composition $X(t,\eta)$, $\eta\in[0,1]$, with the help of the formula
\ba
\label{e:XX}
X(t,\eta)=X(t,0)+\int_0^\eta \nabla X(t,y)\,\di z.
\ea
Second, we replace $\nabla X$ by the process $Y^\phi$ which is a solution of the ``truncated'' variational equation. Finally, 
we use the bounded process $\phi(Y^\phi)$ instead of $\nabla X$ in \eqref{e:XX}. The averaged process $Z$ is treated similarly with the help 
of the formula \eqref{e:XXx}.

More presicely, let $\phi$ satisfy the following conditions:
\ba
\label{e:phi}
&\phi\in C^2(\bR,\bR),\\
&\phi(0)=0,\\
&|\phi(x)|\leq C_\phi\quad \text{ for some }C_\phi>0,\\
&|\phi(x)-\phi(y)|\leq   |x-y|,\quad x,y\in\bR,\\
&|\phi'(x)-\phi'(y)|\leq   |x-y|,\quad x,y\in\bR.
\ea
In particular, we have that $\|\phi'\|_\infty\leq 1$, $\|\phi''\|_\infty\leq 1$ and $|\phi(x)|\leq |x|$.
A concrete example of such function $\phi$ is given in \eqref{e:phi-exa}.  

Consider the following truncated and extended counterpart of the system 
\eqref{diffeq}:
\ba
\label{diffeqtrun}
& X^\phi (t,x)=X_0(x)+\int_0^t  F \Big(x,  \eta^\phi (s)|^1_0, X^\phi (s,x), Z^\phi (s)  \Big)\,\di s +
\sum_{j=1}^\infty \int_0^t  \Sigma^j \Big(x,  \eta^\phi (s)|^1_0,  X^\phi (s,x), Z^\phi (s) \Big)\, \di B^j(s)\\
& Y^\phi(t,x)=\nabla X_0(x) +\int_0^t \Big( F_x \big(x,  \eta^\phi (s)|^1_0, X ^\phi(s,x), Z^\phi (s) \big)
+  F_ X \big(x,  \eta^\phi (s)|^1_0, X ^\phi(s,x), Z^\phi (s) \big)\phi ( Y^\phi (s,x) )\Big)\,\di s\\
&\qquad +\sum_{j=1}^\infty 
\int_0^t \Big(\Sigma^j_x(x,  \eta^\phi (s)|^1_0,  X^\phi(s,x), Z^\phi (s))
+\Sigma^j_X \Big(x,  \eta^\phi (s)|^1_0, X^\phi (s,x), Z^\phi (s) \Big)\phi ( Y ^\phi(s,x)) \Big)\, \di B^j(s),\\
& Z^\phi(s)= X^\phi  (s,0)  \mu ([0,1]) + \int^1_0  \phi ( Y ^\phi(s,z))  \mu ([z,1])\, \di z\\
&  \eta^\phi (t)= \eta _0
+\int_0^t f\Big(  \eta^\phi (s)|^1_0,  X^\phi (s,0)  +\int _0^{  \eta^\phi (s)|^1_0} \phi ( Y^\phi(s,z))\, \di z, Z^\phi (s) \Big)\,\di s\\
&\qquad \qquad+ \int_0^t    
\sigma\Big(  \eta^\phi (s)|^1_0,   X^\phi (s,0) +\int _0^{  \eta^\phi (s)|^1_0} \phi ( Y^\phi(s,z))\, \d z, Z^\phi (s) \Big)\,\di W(s),\\
\ea

\begin{thm}
 \label{existence-phi}
Let assumptions $\mathbf{A}_1$, $\mathbf{A}_2$ and $\mathbf{A}_3$ hold true and let $\phi$ satisfy conditions \eqref{e:phi}. 
Then for any $X_0(\cdot)\in C^2([0,1],\bR)$ and $\eta_0\in \bR$,
the system \eqref{diffeqtrun} 
has a unique continuous solution $(X^\phi, Y^\phi,\eta^\phi )$ such that $X^\phi (t,\cdot)\in C^2([0,1],\bR)$, $t\geq 0$.
\end{thm}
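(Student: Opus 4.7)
I would prove this by a standard Picard iteration applied to the full tuple $(X^\phi, Y^\phi, Z^\phi, \eta^\phi)$. The truncation $\phi$ together with the finite total variation of $\mu$ from $\mathbf{A}_3$ are precisely what restore the global Lipschitz and boundedness properties that the original system \eqref{diffeq} lacks, turning \eqref{diffeqtrun} into a standard contraction problem; this is the whole point of the auxiliary system \eqref{diffeqtrun}.

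\textbf{Iteration and contraction.} Fix $T > 0$ and work on the Banach space $\mathcal H_T$ of continuous adapted tuples $(X, Y, Z, \eta)$, with $X, Y : [0, T] \times [0, 1] \to \bR$ and $Z, \eta$ scalar, equipped with
\[
\|(X, Y, Z, \eta)\|_T^2 := \sup_{x \in [0, 1]} \E \sup_{t \leq T}\bigl(|X(t,x)|^2 + |Y(t,x)|^2\bigr) + \E \sup_{t \leq T}\bigl(|Z(t)|^2 + |\eta(t)|^2\bigr).
\]
Starting from the constant tuple $(X_0(x), \nabla X_0(x), 0, \eta_0)$, I would define iterates by substituting the previous tuple into the right-hand sides of \eqref{diffeqtrun}. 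By $\mathbf{A}_1$, $\mathbf{A}_2$ the coefficients $F, \Sigma, F_x, F_X, \Sigma_x, \Sigma_X, f, \sigma$ are globally Lipschitz with constants uniform in $x, \eta$; by $\mathbf{A}_3$ we have $|\mu([z, 1])| \leq M$; and by construction $|\phi| \leq C_\phi$ with $\phi$ being $1$-Lipschitz. Applying BDG, Itô isometry and Gr\"onwall's lemma yields an estimate of the form $\|\Delta^{(n)}\|_T^2 \leq C T \|\Delta^{(n-1)}\|_T^2$, where $\Delta^{(n)} := (X^{(n+1)} - X^{(n)}, Y^{(n+1)} - Y^{(n)}, Z^{(n+1)} - Z^{(n)}, \eta^{(n+1)} - \eta^{(n)})$. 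The Lipschitz estimate for the $\eta$-component relies critically on the fact that $\eta \mapsto X(s, 0) + \int_0^{\eta|_0^1} \phi(Y(s, z))\,\di z$ is Lipschitz in $\eta$ with the \emph{deterministic} constant $C_\phi$; the estimate for $Z$ uses $|\mu([z,1])| \leq M$ together with $\|\phi\|_\infty \leq C_\phi$. For $T^*$ small enough this is a contraction, so the iterates converge to a unique fixed point on $[0, T^*]$; iterating on successive intervals of length $T^*$ yields a unique continuous solution on $[0, \infty)$, and the Markov property follows from strong uniqueness and time-homogeneity of the coefficients.

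\textbf{$C^2$ regularity and main obstacle.} For each $t$, the regularity $X^\phi(t, \cdot) \in C^2([0, 1], \bR)$ would be obtained by formally differentiating the first equation of \eqref{diffeqtrun} twice in $x$: by $\mathbf{A}_2$ the coefficients are $C^2$ in $(x, X)$ with bounded Lipschitz second derivatives, so the resulting linear SDEs for $\nabla X^\phi$ and $\nabla^2 X^\phi$ have Lipschitz bounded coefficients; uniform $L^p$-moment bounds on their spatial increments combined with Kolmogorov's continuity criterion (for $p$ sufficiently large) then produce a version with $X^\phi(t, \cdot) \in C^2$ almost surely. The main obstacle --- and the very reason for introducing the cut-off --- is that without $\phi$ the composition $X^\phi(s, \eta^\phi(s))$ in the drift and diffusion of $\eta^\phi$ could not be bounded by a deterministic multiple of $|\eta^\phi(s)|$, so the contraction argument would fail; truncating the spatial gradient by a bounded $\phi$ is precisely what restores global Lipschitz control and closes the fixed-point scheme.
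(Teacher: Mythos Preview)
Your proposal is correct and follows essentially the same strategy as the paper: a Picard iteration on the truncated system \eqref{diffeqtrun}, exploiting that the cut-off $\phi$ (bounded, $1$-Lipschitz) together with $|\mu([z,1])|\leq M$ from $\mathbf{A}_3$ makes all composite coefficients globally Lipschitz, followed by Kolmogorov's criterion for the $C^2$ regularity in $x$.

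The only technical differences are cosmetic. The paper measures the iterates in the norm
\[
G_k(t)=\E\sup_{s\leq t}\Big(|\eta_{k+1}^\phi-\eta_k^\phi|^p+|X_{k+1}^\phi(s,0)-X_k^\phi(s,0)|^p+\int_0^1|X_{k+1}^\phi-X_k^\phi|^p\,\di x+\int_0^1|Y_{k+1}^\phi-Y_k^\phi|^p\,\di x\Big)
\]
and derives the integral inequality $G_{k+1}(t)\leq C\int_0^t G_k(s)\,\di s$, which gives $\sum_k G_k(T)<\infty$ directly on every $[0,T]$ without having to restart on short intervals. Your choice of $\sup_{x}\E\sup_{t}|\cdot|^2$ works just as well (the estimate $\E\sup_t\int_0^1|\cdot|^2\,\di z\leq\int_0^1\E\sup_t|\cdot|^2\,\di z\leq\sup_z\E\sup_t|\cdot|^2$ closes the $Z^\phi$- and $\eta^\phi$-components against your norm), and the small-time contraction plus concatenation is an equivalent route. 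Both arguments rely on exactly the two facts you single out: the deterministic Lipschitz constant $C_\phi$ for $\eta\mapsto\int_0^{\eta|_0^1}\phi(Y(s,z))\,\di z$, and the bound $|\phi(Y)|\leq C_\phi$ needed when estimating the product $F_X\cdot\phi(Y^\phi)$ in the $Y^\phi$-equation.
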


The existence and uniqueness of a solution of \eqref{diffeqtrun} will be established by the method of successive approximations
with the help of the estimates obtained in the following Lemmas.

We set $X_0^\phi:=X_0$, $Y_0^\phi:=\nabla X_0$, $\eta_0^\phi:=\eta_0$, and for $k\geq 0$ we define
\ba
X_{k+1}^\phi(t,x)&=X_0(x)+\int_0^t  F \Big(x,  \eta^\phi_k(s)|^1_0, X_k^\phi(s,x), Z_k^\phi(s)  \Big)\,\di s  
+\sum_{j=1}^\infty\int_0^t  \Sigma^j \Big(x,  \eta^\phi_k(s)|^1_0,  X_k^\phi(s,x), Z_k^\phi(s)  \Big)\, \di B^j(s),\\
Y_{k+1}^\phi(t,x)&
=\nabla X_0(x) +\int_0^t \Big( F_x(x,  \eta^\phi_k(s)|^1_0, X_k^\phi(s,x), Z_k^\phi(s))
+  F_X(x,  \eta^\phi_k(s)|^1_0, X_k^\phi(s,x), Z_k^\phi(s))\phi ( Y_{k}^\phi(s,x) )\Big)\,\di s\\
&\qquad 
+\sum_{j=1}^\infty\int_0^t \Big(\Sigma^j_x(x,  \eta^\phi_k(s)|^1_0,  X_k^\phi(s,x), Z_k^\phi(s))
+ \Sigma^j_X(x,  \eta^\phi_k(s)|^1_0, X_k^\phi(s,x), Z_k^\phi(s))\phi ( Y_{k}^\phi(s,x)) \Big)\, \di B^j(s),\\
Z_k^\phi(s)&= X_k (s,0)  \mu ([0,1]) + \int^1_0  \phi ( Y_k ^\phi(s,z))  \mu ([z,1])\, \di z,\\
\eta^\phi_{k+1}(t) &= \eta_0+\int_0^t  
f\Big(  \eta^\phi_k(s)|^1_0,  X^\phi_k(s,0)   +\int _0^{  \eta^\phi_k(s)|^1_0} \phi ( Y_k^\phi(s,z))\,\di z, Z_k^\phi(s) \Big)\,\di s\\
&\qquad\qquad + \int_0^t    \sigma\Big(  \eta^\phi_k(s)|^1_0,  X^\phi_k(s,0) 
+\int_0^{\eta^\phi_k(s)|^1_0} \phi ( Y_k^\phi(s,z))\,\di z, Z_k^\phi(s) \Big)\,  \di W(s).
\ea
The following elementary estimates hold true due to the bound $|\phi(y)|\leq |y|$, $y\in \bR$:
\ba
\label{e:est}
&\Big|X^\phi_k(s,0)   +\int _0^{  \eta^\phi_k(s)|^1_0} \phi ( Y_k^\phi(s,z))\,\di z\Big|\leq |X^\phi_k(s,0)| + \int_0^1 |Y_k^\phi(s,z)|\,\di z ,\\
&|Z_k^\phi(s)|\leq  M |X_k (s,0)| + M \int_0^1 |Y_k^\phi(s,z)|\,\di z .
\ea

\begin{lem} 
\label{l:boun}
Let $T>0$, $p\geq 2$, $\eta_0\in\bR$ and $X_0(\cdot)\in C^1([0,1],\bR)$.
For each $k\geq 0$, the processes $(t,x)\mapsto X_{k}^\phi(t,x)$, $(t,x)\mapsto Y_{k}^\phi(t,x)$ 
are continuous for $x\in[0,1]$ and $t\in[0,T]$ and the process $t\mapsto \eta^\phi_k(t)$ is continuous
for $t\in[0,T]$, 
and there is $C(p, T)>0$ that does not depend on $C_\phi$ such that
\ba
\label{e:pk}
\E_{X_0,\eta_0 }\Big[
&\sup_{0\leq t\leq T} |\eta^\phi_{k}(t)|^p
+\sup_{0\leq t\leq T}|X^\phi_{k}(t,0)|^p
+\sup_{0\leq t\leq T}\int_0^1 |X^\phi_{k}(t,x)|^p\,\di x
+\sup_{0\leq t\leq T}\int_0^1 |Y^\phi_{k}(t,x)|^p\,\di x
\Big]\\
&\leq C(p,T) \Big(|\eta_0|^p
+|X_0(0)|^p
+\int_0^1 |X_0(x)|^p\,\di x
+\int_0^1 |\nabla X_0(x)|^p\,\di x
\Big).
\ea
\end{lem}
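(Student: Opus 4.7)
The plan is a Picard iteration in $k$, with the critical requirement that all constants be independent of $C_\phi$. Let
\bes
V_k(t):=\E_{X_0,\eta_0}\Big[
\sup_{0\leq s\leq t}|\eta^\phi_k(s)|^p
+\sup_{0\leq s\leq t}|X^\phi_k(s,0)|^p
+\sup_{0\leq s\leq t}\int_0^1|X^\phi_k(s,x)|^p\,\di x
+\sup_{0\leq s\leq t}\int_0^1|Y^\phi_k(s,x)|^p\,\di x
\Big],
\ees
and let $D$ denote the parenthesis on the right-hand side of \eqref{e:pk}. The goal is a recursive inequality $V_{k+1}(t)\leq C_1(p,T)(1+D)+C_2(p,T)\int_0^t V_k(s)\,\di s$ on $[0,T]$; since $V_0(t)\equiv D$, unwinding this inequality yields $V_k(T)\leq C(p,T)(1+D)$ uniformly in $k$, which is the stated bound.

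I would derive the recursion by treating each of the four summands of $V_{k+1}$ separately. In every case the drift is estimated by H\"older's inequality and the stochastic integral by Burkholder--Davis--Gundy applied to the $l^2$-valued Brownian integral. For the $\eta^\phi_{k+1}$ and $X^\phi_{k+1}(t,0)$ summands, Lipschitz continuity of $f, \sigma, F, \|\Sigma\|_{l^2}$ ($\mathbf{A}_1$) combined with the a priori bounds \eqref{e:est}, the truncation $\eta^\phi_k|^1_0\in[0,1]$, and the elementary bound $|\phi(y)|\leq|y|$ produces integrands bounded by $C\big(1+|X^\phi_k(s,0)|+\int_0^1|Y^\phi_k(s,z)|\,\di z\big)$. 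For the $Y^\phi_{k+1}$ summand, the boundedness of $F_x, F_X, \|\Sigma_x\|_{l^2}, \|\Sigma_X\|_{l^2}$ ($\mathbf{A}_2$), again together with $|\phi(Y^\phi_k)|\leq|Y^\phi_k|$, produces integrands bounded by $C(1+|Y^\phi_k(s,x)|)$. For the two spatial-$L^p$ terms the supremum is pulled inside the spatial integral via $\sup_t\int_0^1|\cdot|^p\,\di x\leq\int_0^1\sup_t|\cdot|^p\,\di x$ and one bounds pointwise in $x$ before reassembling by Fubini; Jensen's inequality $\big(\int_0^1|Y|\,\di z\big)^p\leq\int_0^1|Y|^p\,\di z$ handles the raised integrated terms. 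Continuity in $t$ of the iterates is standard for stochastic integrals, and joint continuity in $(t,x)$ follows from Kolmogorov's criterion applied to the just-established moment bounds together with the Lipschitz regularity of $F, \Sigma$ in $x$.

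The main obstacle is precisely the $C_\phi$-independence of the constants, which is what makes the Picard iteration close uniformly in $k$ and is later needed, when $C_\phi\to\infty$, to recover a solution of the untruncated system \eqref{diffeq}. Securing it requires systematically replacing the trivial bound $|\phi|\leq C_\phi$ by the one-Lipschitz bound $|\phi(y)|\leq|y|$ at every occurrence of $\phi(Y^\phi_k)$: in the composition $\int_0^{\eta^\phi_k|^1_0}\phi(Y^\phi_k(s,z))\,\di z$ appearing in the equation for $\eta^\phi_{k+1}$, in the averaged quantity $Z^\phi_k$, and in the couplings $F_X\phi(Y^\phi_k)$ and $\Sigma_X\phi(Y^\phi_k)$ driving the equation for $Y^\phi_{k+1}$.
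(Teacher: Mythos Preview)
Your proposal is correct and follows essentially the same route as the paper: Burkholder--Davis--Gundy for the stochastic integrals, Lipschitz growth of $f,\sigma,F,\|\Sigma\|_{l^2}$ together with the bounds \eqref{e:est} for the $\eta^\phi_{k+1}$ and $X^\phi_{k+1}$ terms, boundedness of $F_x,F_X,\|\Sigma_x\|_{l^2},\|\Sigma_X\|_{l^2}$ together with $|\phi(y)|\leq|y|$ for the $Y^\phi_{k+1}$ term, and Kolmogorov's criterion for joint continuity. The only cosmetic difference is in how the recursion is closed: you iterate $V_{k+1}(t)\leq C_1(1+D)+C_2\int_0^t V_k(s)\,\di s$ and sum the resulting exponential series, whereas the paper introduces $H_k(t):=\max_{0\leq j\leq k}V_j(t)$, so that the recursion becomes the self-referential $H_k(t)\leq 3^{p-1}H_0+C_4+C_4\int_0^t H_k(s)\,\di s$ and Gronwall applies directly; the two devices are equivalent and yield the same $k$-uniform, $C_\phi$-independent bound.
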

\begin{proof}
Let $T>0$ and $p\geq 2$ be fixed. Obviously, for $k=0$ the processes
\ba
\eta^\phi_{1}(t) &= \eta_0+\int_0^t  
f\Big(  \eta^\phi_0|^1_0,  X_0(0)   +\int _0^{\eta_0|^1_0} \phi (\nabla X_0(z))\,\di z, Z_0^\phi \Big)\,\di s\\
&\qquad\qquad + \int_0^t    \sigma\Big(  \eta^\phi_k|^1_0,  X_0(0) 
+\int_0^{\eta^\phi|^1_0} \phi (\nabla X_0(z))\,\di z, Z_0^\phi \Big)\,  \di W(s),\\
X_{1}^\phi(t,x)&=X_0(x)+\int_0^t  F \Big(x,  \eta_0|^1_0, X_0(x), Z_0^\phi \Big)\,\di s  
+\sum_{j=1}^\infty\int_0^t  \Sigma^j \Big(x,  \eta_0|^1_0,  X_0(x), Z_0^\phi  \Big)\, \di B^j(s),\\
Y_{1}^\phi(t,x)&
=\nabla X_0(x) +\int_0^t \Big( F_x(x,  \eta_0|^1_0, X_0(x), Z_0^\phi)
+  F_X(x,  \eta_0|^1_0, X_0(x), Z_0^\phi)\phi ( \nabla X_0(x) )\Big)\,\di s\\
&\qquad 
+\sum_{j=1}^\infty\int_0^t \Big(\Sigma^j_x(x,   \eta_0|^1_0, X_0(x), Z_0^\phi   )
+ \Sigma^j_X(x, \eta_0|^1_0, X_0(x), Z_0^\phi)\phi (\nabla X_0(x)) \Big)\, \di B^j(s),\\
Z_0^\phi&= X_0 (0)  \mu ([0,1]) + \int^1_0  \phi ( \nabla X_0(z))  \mu ([z,1])\, \di z
\ea
are well defined, continuous by the Kolmogorov criterion 
(see, e.g., Theorem 1.4.1 in Kunita \cite{Kunita97}) and \eqref{e:pk} holds true.

Assume by 
induction that \eqref{e:pk} holds for some $k\geq 1$.
By the Lipschitz continuity, for all $x\in[0,1]$ and $\eta,X,Z\in\bR$ we have 
\ba
|f(\eta|_0^1,X,Z)|+ |\sigma(\eta|_0^1,X,Z)|+|F(x,\eta|_0^1,X,Z)|+\|\Sigma(x,\eta|_0^1,X,Z)\|_{l^2}\leq C(1+|X|+|Z|).
\ea
for some $C>0$.

With the help of \eqref{e:est} and the Burkholder inequality we get
\ba
&\E_{X_0,\eta_0} \sup_{s\leq t}|\eta^\phi_{k+1}(s)|^p \\
&\leq 
3^{p-1} |\eta_0|^p 
+ 3^{p-1}\E_{X_0,\eta_0} \sup_{s\leq t} \Big|\int_0^s f\Big(  \eta^\phi_k(u)|^1_0,  X^\phi_k(u,0) 
+\int_0^{  \eta^\phi_k(u)|^1_0} \phi ( Y_k^\phi(u,z))\,\di z, Z_k^\phi(u) \Big)\,\di u\Big|^p\\
&+3^{p-1} \E_{X_0,\eta_0}\sup_{s\leq t} \Big|\int_0^s    \sigma\Big(  \eta^\phi_k(u)|^1_0,  X^\phi_k(u,0) 
+\int_0^{\eta^\phi_k(u)|^1_0} \phi ( Y_k^\phi(u,z))\,\di z, Z_k^\phi(u) \Big)\,  \di W(u)|^p\\
&\leq 3^{p-1}|\eta_0|^p  +
3^{p-1}T^{p-1}
\int_0^t \E_{X_0,\eta_0} \Big|f\Big(  \eta^\phi_k(u)|^1_0,  X^\phi_k(u,0)
+\int _0^{  \eta^\phi_k(u)|^1_0} \phi ( Y_k^\phi(u,z))\,\di z, Z_k^\phi(u) \Big)\Big|^p\,\di u\\
&+3^{p-1}T^{\frac{p}{2}-1}C_\text{Burk} \int_0^t    \E_{X_0,\eta_0}  \Big|\sigma\Big(  \eta^\phi_k(u)|^1_0,  X^\phi_k(u,0) 
+\int_0^{\eta^\phi_k(u)|^1_0} \phi ( Y_k^\phi(u,z))\,\di z, Z_k^\phi(u) \Big)\Big|^p\,  \di u\\
&\leq 3^{p-1}|\eta_0|^p + C_1 + C_1 \int_0^t \E_{X_0,\eta_0} \sup_{u\leq s}|X^\phi_k(u,0)|^p\,\di s
+ C_1 \int_0^t \E_{X_0,\eta_0} \sup_{u\leq s} \int_0^1 |Y^\phi_k(u,x)|^p\,\di x\, \di s
<\infty.
\ea
Note that the constant $C_1$ does not depend on $k$ and $C_\phi$. 

Moreover, we can also estimate
\ba
\label{e:est1}
\E_{X_0,\eta_0} \sup_{s\leq t}|\eta^\phi_{k+1}(s)|^p 
\leq 3^{p-1}|\eta_0|^p+ C_1
&+ C_1 \int_0^t \max_{0\leq j\leq k+1} \E_{X_0,\eta_0} \sup_{0\leq u\leq s}|X^\phi_j(u,0)|^p\,\di s\\
&+ C_1 \int_0^t \max_{0\leq j\leq k+1}\E_{X_0,\eta_0} \sup_{0\leq u\leq s} \int_0^1 |Y^\phi_j(u,x)|^p\,\di x\, \di s.
\ea
Analogously we obtain that
\ba
\E_{X_0,\eta_0} \sup_{0\leq s\leq t}\int^1_0 \Big|  X^\phi_{k+1}(s,x)\Big|^p \, \di x
&
\leq 3^{p-1} \int^1_0 |  X^\phi_{0}(x) |^p \, \di x +  C_2\\
&+C_2 \int_0^t  \E_{X_0,\eta_0} \sup_{0\leq u\leq s} \int^1_0  |X_{k}^\phi(u,x)|^p \,\di x \, \di u\\
&+C_2 \int_0^t \E_{X_0,\eta_0} \sup_{u\leq s} \int_0^1 |Y^\phi_k(u,x)|^p\,\di x\, \di s<\infty.
\ea
Finally using the boundedness of $F_x$, $F_{X}$, $\|\Sigma_x\|_{l^2}$ and $\|\Sigma_X\|_{l^2}$, and the equality $|\phi(x)|\leq |x|$ we get
\ba
\label{e:Y}
\E_{X_0,\eta_0}\sup_{0\leq s\leq t} \int_0^1  |Y^\phi_{k+1}(s,x)|^p\,\di x
&=3^{p-1}\int_0^1 |\nabla X^\phi_{0}(x)|^p\,\di x 
+ C_3\\
&+ C_3 \int_0^t \E_{X_0,\eta_0} \sup_{0\leq s\leq t}\int^1_0 |  Y^\phi_{k}(u,x)|^p\, \di u \, \di x<\infty,
\ea
where the constants $C_2$ and $C_3$ do not depend on $k$ and $C_\phi$.

For $t\in[0,T]$, denote
\ba
H_k(t):=
\max_{0\leq j\leq k} \E_{X_0,\eta_0}\Big[\sup_{0\leq s\leq t}\int_0^1 |X^\phi_j(s,x)|^p\,\di x 
&+\sup_{0\leq s\leq t}|X^\phi_j(s,0)|^p\\
&+\sup_{0\leq s\leq t}\int_0^1 |Y^\phi_j(s,x)|^p\,\di x
+\sup_{0\leq s\leq t} |\eta^\phi_j(s)|^p\Big].
\ea
Hence writing estimates analogous to \eqref{e:est1}, for each $k\geq 0$ we get the inequality
\ba
H_k(t)\leq 3^{p-1} H_0 +
C_4+C_4\int_0^t H_k(s)\,\di s,
\ea
where $C_4=C_1+C_2+C_3$ do not depend on $k$ and $C_\phi$. Hence for some $C(p,T)>0$ that does not depend on $C_\phi$ we get
\ba
\sup_{k\geq 0} H_k(T)\leq C(p,T) H_0.
\ea
The continuity of all the processes for $k\geq 2$ follows by the Kolmogorov criterion.
\end{proof}

\begin{lem}
\label{l:int}
Let $f=f(t,x,\omega)$ be a non-negative continuous function of $(t,x)$.
Then
\ba
\E\sup_{s\leq t}\int_0^1 \int_0^s f(u,x)\,\di u\,\di x
&= \E\int_0^1 \sup_{s\leq t}\int_0^s f(u,x)\,\di u\,\di x\\
&=\int_0^1 \E\sup_{s\leq t}\int_0^s f(u,x)\,\di x\,\di u
=\int_0^t \E \int_0^1 f(u,x)\,\di x\,\di u.
\ea
\end{lem}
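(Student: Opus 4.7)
The plan is to exploit the fact that the integrand $f$ is non-negative. For each fixed $x \in [0,1]$ and each $\omega$, the map
\bes
s \mapsto \int_0^s f(u,x,\omega)\,\di u
\ees
is non-decreasing in $s$, so the supremum over $s \in [0,t]$ is actually attained at the right endpoint:
\bes
\sup_{s\leq t}\int_0^s f(u,x,\omega)\,\di u = \int_0^t f(u,x,\omega)\,\di u.
\ees
This identity holds pointwise (for every $x$ and every $\omega$), and the same holds after integrating in $x$ by monotonicity of the Lebesgue integral in $x$.

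Applying this identity to the first expression in the chain I would rewrite
\bes
\E \sup_{s\leq t}\int_0^1 \int_0^s f(u,x)\,\di u\,\di x = \E \int_0^1 \int_0^t f(u,x)\,\di u\,\di x,
\ees
and applying it to the second expression I would likewise obtain
\bes
\E \int_0^1 \sup_{s\leq t}\int_0^s f(u,x)\,\di u\,\di x = \E \int_0^1 \int_0^t f(u,x)\,\di u\,\di x.
\ees
So the first two quantities in the statement coincide. The third expression (parsed as $\int_0^1 \E\sup_{s\leq t}\int_0^s f(u,x)\,\di u\,\di x$, the inner $\di x$ evidently being a typo for $\di u$) reduces the same way to $\int_0^1 \E\int_0^t f(u,x)\,\di u\,\di x$, and then to $\E\int_0^1\int_0^t f(u,x)\,\di u\,\di x$ by Tonelli's theorem, which applies because $f\geq 0$ is jointly measurable in $(\omega,u,x)$.

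Finally, to obtain the last equality $\int_0^t \E \int_0^1 f(u,x)\,\di x\,\di u$, I would apply Tonelli one more time to interchange the order of integration between $u$ and $x$, and between the expectation and the $u$-integral. There is no substantive obstacle here: the entire argument rests on the pointwise monotonicity observation plus Tonelli's theorem, both of which are applicable since $f$ is continuous (hence jointly measurable) and non-negative. The only point that warrants care is the joint measurability of $(u,x,\omega)\mapsto f(u,x,\omega)$ needed for Tonelli, which follows from continuity of $f$ in $(t,x)$ together with the implicit adaptedness/measurability assumption on $\omega\mapsto f(t,x,\omega)$.
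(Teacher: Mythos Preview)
Your proof is correct and follows essentially the same approach as the paper's: both rely on the observation that non-negativity of $f$ makes $s\mapsto\int_0^s f(u,x)\,\di u$ non-decreasing, so the supremum is attained at $s=t$, after which Fubini/Tonelli handles the interchange of integrals. Your identification of the typo in the third expression is also correct.
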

\begin{proof}
For each $\omega\in\Omega$, with the help of Fubini's theorem we have that
\ba
\sup_{s\leq t}\int_0^1 \int_0^s f(u,x,\omega)\,\di u\,\di x
&=\sup_{s\leq t}\int_0^s \int_0^1 f(u,x,\omega)\,\di x\,\di u\\
&=\int_0^t \int_0^1 f(u,x,\omega)\,\di x\,\di u\\
&=\int_0^1 \int_0^t f(u,x,\omega)\,\di u\,\di x
=\int_0^1 \sup_{s\leq t}\int_0^s f(u,x,\omega)\,\di u\,\di x.
\ea
Taking expectation and applying again Fubini's theorem to the second equality yields the statement.
\end{proof}

\begin{lem} 
\label{W1ptrun}
Let Assumptions $\mathbf{A}_1$, $\mathbf{A}_2$ and $\mathbf{A}_3$ hold true and let the function $\phi$ satisfy \eqref{e:phi}. Let
$X_0(\cdot)\in C^2([0,1],\bR)$, and $\eta_0\in\bR$.
Then the system
\eqref{diffeqtrun} has a unique strong solution with a modification $X^\phi(t,\cdot)\in C^2([0,1],\bR)$ and $Y^\phi(t,\cdot)\in C^1([0,1],\bR)$ 
for $t\geq 0$. 
\end{lem}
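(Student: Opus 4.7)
The plan is to establish convergence of the Picard iterates $(X_k^\phi, Y_k^\phi, \eta_k^\phi)_{k\geq 0}$ constructed above to the unique solution of \eqref{diffeqtrun}, and then to upgrade the spatial regularity from the natural $W^{1,2}$-type estimates to $C^2 \times C^1$ via Kolmogorov's continuity criterion applied to higher-order spatial derivatives.

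For the convergence step, I would set, for each $k \geq 0$, the differences $\Delta X_k := X_{k+1}^\phi - X_k^\phi$, $\Delta Y_k := Y_{k+1}^\phi - Y_k^\phi$, $\Delta \eta_k := \eta_{k+1}^\phi - \eta_k^\phi$ and $\Delta Z_k := Z_{k+1}^\phi - Z_k^\phi$, and define the error functional
\begin{equation*}
\Phi_k(t) := \E_{X_0,\eta_0}\sup_{s\leq t}\Big[\|\Delta X_k(s,\cdot)\|_{L^2}^2 + |\Delta X_k(s,0)|^2 + \|\Delta Y_k(s,\cdot)\|_{L^2}^2 + |\Delta \eta_k(s)|^2\Big].
\end{equation*}
Combining the Lipschitz hypotheses in $\mathbf{A}_1$, the 1-Lipschitz property of $\phi$ and of the truncation $\eta\mapsto \eta|_0^1$, the representation \eqref{e:XX} for the composition $X^\phi(s,\eta^\phi(s)|_0^1)$ and the representation \eqref{e:XXx} for $Z^\phi(s)$, together with the Burkholder--Davis--Gundy inequality for the stochastic integrals, I expect a recursive estimate of the form
\begin{equation*}
\Phi_{k+1}(t) \leq C(T) \int_0^t \Phi_k(s)\, \di s, \quad t \in [0,T],
\end{equation*}
with $C(T)$ independent of $k$. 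Iteration yields $\Phi_k(T) \leq (C(T) T)^k / k! \cdot \Phi_0(T)$, so the sequence is Cauchy in the relevant norm, its limit solves \eqref{diffeqtrun}, and the same Gronwall argument applied to the difference of any two solutions gives uniqueness. The uniform moment bounds from Lemma \ref{l:boun} justify passing to the limit under the integrals.

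To upgrade to the stated regularity, I would formally differentiate the equations for $X^\phi$ and $Y^\phi$ in $x$ up to second order. Under Assumption $\mathbf{A}_2$, the derivatives $F_{xx}, F_{xX}, F_{XX}$ and the $l^2$-norms of $\Sigma_{xx}, \Sigma_{xX}, \Sigma_{XX}$ are bounded and Lipschitz, so the resulting variational equations for $\nabla X^\phi$, $\nabla^2 X^\phi$ and $\nabla Y^\phi$ are linear in these derivatives with bounded stochastic coefficients. Standard $L^p$-moment estimates (analogous to Lemma \ref{l:boun}) then produce bounds on $\E|\nabla^2 X^\phi(t,x)-\nabla^2 X^\phi(t,y)|^p$ and $\E|\nabla Y^\phi(t,x)-\nabla Y^\phi(t,y)|^p$ of the form $C|x-y|^{p/2}$ for $p$ large enough, so that Kolmogorov's continuity criterion yields continuous modifications, giving $X^\phi(t,\cdot) \in C^2([0,1],\bR)$ and $Y^\phi(t,\cdot) \in C^1([0,1],\bR)$ for every $t \geq 0$.

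The main obstacle is the coupled nonlocal structure of \eqref{diffeqtrun}: the trajectory of $\eta^\phi$ depends on the entire spatial profile of $X^\phi$ via the composition $X^\phi(s,\eta^\phi(s)|_0^1)$, while the averaged term $Z^\phi$ couples back into every equation. The cut-off $\phi$ keeps all drift and diffusion coefficients globally Lipschitz in $Y^\phi$ with a constant that does not blow up, and the identities \eqref{e:XX} and \eqref{e:XXx} let one express the pointwise and $\mu$-integrated values of $X^\phi$ through $X^\phi(\cdot,0)$ and the $L^2$-quantity $\phi(Y^\phi(\cdot,\cdot))$; this is precisely what breaks the circular dependence between $X^\phi$, $Y^\phi$ and $\eta^\phi$ and makes the recursive estimate for $\Phi_k$ close.
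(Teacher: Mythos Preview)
Your proposal is correct and follows essentially the same route as the paper: set up a Picard-type error functional in the successive differences of $(X_k^\phi, Y_k^\phi, \eta_k^\phi)$, derive the recursive inequality $G_{k+1}(t)\leq C\int_0^t G_k(s)\,\di s$ from the Lipschitz assumptions together with the boundedness and $1$-Lipschitz property of $\phi$ and of $\eta\mapsto\eta|_0^1$, iterate to a summable series, and conclude regularity via Kolmogorov. The only minor differences are that the paper runs the argument for general $p\geq 2$ rather than $p=2$, and that the paper simply invokes Kolmogorov for the $C^2\times C^1$ modification without writing out the variational equations for $\nabla^2 X^\phi$ and $\nabla Y^\phi$ (these appear separately in the subsequent Lemmas~\ref{W1grad} and~\ref{l:nablaY}), whereas you sketch that step more explicitly.
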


\begin{proof}
Let $T>0$ and $p\geq 2$. For $0\leq t\leq T$ and $k\geq 0$ denote
\ba
\label{bigsystem}
G_{k+1}(t)&:=
\E_{X_0,\eta_0}\sup_{0\leq s\leq t} |\eta^\phi_{k+1}(s) -  \eta^\phi_{k}(s) |^p
+\E_{X_0,\eta_0}\sup_{0\leq s\leq t}|X^\phi_{k+1}(s,0) -  X^\phi_k (s,0) |^p\\
&+\E_{X_0,\eta_0}\sup_{0\leq s\leq t}\int_0^1 |X^\phi_{k+1}(s,x) -  X^\phi_k (s,x) |^p\,\di x
+\E_{X_0,\eta_0}\sup_{0\leq s\leq t}\int_0^1 | Y^\phi_{k+1}(s,x) -  Y^\phi_k (s,x) |^p\,\di x. 
\ea
We estimate the summands in \eqref{bigsystem}
one by one with the help of the Burkholder inequality for stochastic integrals 
and the Lipschitz property of the functions standing in the integrands, and the 
elementary inequality 
\ba
\Big|x|_0^1-y|_0^1\Big|\leq |x-y|,\quad x,y\in\bR.
\ea
The constants $C_1,C_2,\dots$ in the sequel will not depend on $k$ but may depend on $C_\phi$.

\noindent
1. We have
\ba
&\E_{X_0,\eta_0}\sup_{0\leq s\leq t}  \int^1_0 |  X^\phi_{k+1}(s,x) -  X^\phi_k (s,x)|^p\, \di x\\
&\leq 2^{p-1}\E_{X_0,\eta_0}\sup_{0\leq s\leq t}\int^1_0 \Big| \int_0^s\Big( F (x,  \eta^\phi_k(u)|_0^1, X_k^\phi(u,x), Z_k^\phi(u) )\\
&\qquad\qquad \qquad\qquad \qquad\qquad \qquad\qquad 
- F  (x,  \eta^\phi_{k-1}(u)|_0^1 , X_{k-1}^\phi(u,x), Z_{k-1}^\phi(u)   )\Big)\, \di u \Big| \,\di x\\
&+2^{p-1}\E_{X_0,\eta_0}\sup_{0\leq s\leq t} \int^1_0
\Big|\sum_{j=1}^\infty \int_0^s  \Big( \Sigma^j(x,  \eta^\phi_k(u)|_0^1,  X_k^\phi(u,x), Z_k^\phi(u) )\\
&\qquad\qquad \qquad\qquad \qquad\qquad \qquad\qquad 
- \Sigma^j(x,  \eta^\phi_{k-1}(u)|_0^1,  X_{k-1}^\phi(u,x), Z_{k-1}^\phi(u)  )\Big)\, \di B^j(u)\Big|^p\,\di x.
\ea
Applying Lemma~\ref{l:int} to the term containing stochastic integrals, the Jensen inequality to the Lebesgue integral, the  
Burkholder inequality to the stochastic integral and finally the Lipschitz property of $F$ and $\|\Sigma\|_{l^2}$ we get 
\ba
\label{estimateF1}
\E_{X_0,\eta_0}\sup_{0\leq s\leq t} \int^1_0 |  X^\phi_{k+1}(s,x) -  X^\phi_k (s,x)|^p\, \di x
&\leq C_1 \int_0^t \E_{X_0,\eta_0} \sup_{0\leq u\leq s}|\eta^\phi_k(u)-  \eta^\phi_{k-1}(u)|^p\,\di s\\
&+C_1\int_0^t \E_{X_0,\eta_0}\sup_{0\leq u\leq s}\int_0^1| X_k^\phi(u,x)- X_{k-1}^\phi(u,x)|^p\,\di x\,\di s\\
&+C_1 \int_0^t \E_{X_0,\eta_0} \sup_{0\leq u\leq s}| Z_k^\phi(s) - Z_{k-1}^\phi(s)|^p \,\di s .
\ea
To estimate the term with $Z^\phi$ we take into account \eqref{e:est} to get for $0\leq s\leq t\leq T$
\ba
\label{estimateXbar}
| Z_k^\phi(s) - Z_{k-1}^\phi(s)|^p
&=\abs{\left(  X_k (s,0)    -  X_{k-1} (s,0)  \right)\mu ([0,1])+ \int^1_0 \left(  \phi ( Y_k ^\phi(s,z))  -  \phi ( Y_{k-1} ^\phi(s,z))\right)  \mu ([z,1])
\,\di z}^p\\
&\leq 2^{p-1}M^p |X_k (s,0)-X_{k-1}(s,0)|^p + 2^{p-1}M^p \int_0^1 |Y_k^\phi(s,x)- Y_{k-1}^\phi(s,x)|^p\,\di x.
\ea
Therefore we can conclude that
\ba
\label{estimateC_1}
\E_{X_0,\eta_0}\sup_{0\leq s\leq t}\int_0^1 |  X^\phi_{k+1}(s,x) -  X^\phi_k (s,x)|^p\,\di x \leq C_2 \int^t_0 G_k(s)\,\di s.
\ea
2. Analogously we obtain a similar estimate for $\E_{X_0,\eta_0} \sup_{0\leq s\leq t}|X^\phi_{k+1}(t,0) -  X^\phi_k (t,0)|^p$ with some constant $C_3>0$.

\noindent
3. The fourth summand of \eqref{bigsystem} is estimated similarly.
The terms containing $F_x$ and $\|\Sigma_x\|_{l^2}$  
are estimated in the same way as above. 
For the terms containing  the product $F_X\cdot\phi(Y^\phi)$ 
we write 
\ba
 &\Big| F_X (x,\eta^\phi_k(s)|_0^1, X_k^\phi(s,x), Z_k^\phi(s))\phi(Y_{k}^\phi(s,x) )
 - F_X (x,\eta^\phi_{k-1}(s)|_0^1, X_{k-1}^\phi(s,x), Z_{k-1}^\phi(s) )\phi ( Y_{k-1}^\phi(s,x) )\Big|^p\\
&\leq \Big| F_X (x,\eta^\phi_k(s)|_0^1, X_k^\phi(s,x), Z_k^\phi(s) )\phi ( Y_{k}^\phi(s,x) )
- F_X (x,\eta^\phi_{k}(s)|_0^1, X_{k}^\phi(s,x), Z_k^\phi(s))\phi ( Y_{k-1}^\phi(s,x) )\Big|^p\\
&+ \Big| F_X (x,  \eta^\phi_{k}(s)|_0^1, X_{k}^\phi(s,x), Z_k^\phi(s) )\phi ( Y_{k-1}^\phi(s,x) )
- F_X (x,  \eta^\phi_{k-1}(s)|_0^1, X_{k-1}^\phi(s,x), Z_{k-1}^\phi(s))\phi ( Y_{k-1}^\phi(s,x) )\Big|^p\\
&\leq \| F_X\|_\infty^p\cdot |\phi ( Y_{k}^\phi(s,x) )-\phi ( Y_{k-1}^\phi(s,x) )|^p\\
&+C_3 \Big|\phi ( Y_{k-1}^\phi(s,x) )\Big|^p 
\Big(\Big| \eta^\phi_k(s)|_0^1- \eta^\phi_{k-1}(s)_0^1\Big|^p+| X_k^\phi(s,x)- X_{k-1}^\phi(s,x)|^p+| Z_k^\phi(s)  - Z_{k-1}^\phi(s)|^p \Big)\\
&\leq C_4 | Y_{k}^\phi(s,x) - Y_{k-1}^\phi(s,x)|^p \\
&\ +C_4 \Big[| \eta^\phi_k(s)- \eta^\phi_{k-1}(s)|^p
+| X_k^\phi(s,x)- X_{k-1}^\phi(s,x)|^p+| Z_k^\phi(s)  - Z_{k-1}^\phi(s)|^p \Big],
\ea
where $C_4$ depends on $C_\phi$.
We have used here that $\phi$ is bounded by $C_{\phi} $ and its Lipschitz constant is 1. The term containing $\|\Sigma_X\|_{l^2}\cdot \phi(Y^\phi)$
is estimated analogously, and hence we obtain that
\ba
\label{estimateC_3}
\E_{X_0,\eta_0}&\sup_{0\leq s\leq t}\int_0^1| Y^\phi_{k+1}(s,x)-Y^\phi_k (s,x)|^p\,\di x \leq C_5\int^t_0 G_k(s)\,\di s,
\ea
where $C_5$ depends on $C_\phi$.

\noindent
4.
To estimate the first term in \eqref{bigsystem} we follow the above arguments to get
\ba
\E_{X_0,\eta_0}\sup_{0\leq s\leq t}|  \eta^\phi_{k+1}(s) -  \eta^\phi_{k}(s)|^p
&\leq C_6\E_{X_0,\eta_0}\Big[|\eta^\phi_k(s)|^1_0-  \eta^\phi_{k-1}(s)|^1_0|^p
+C_6| X^\phi_k(s,0)  -  X^\phi_{k-1}(s,0)|^p \\
&+C_6 \Big| \int_0^{\eta^\phi_k(s)|^1_0} \phi ( Y_k^\phi(s,z))\, \di z-\int_0^{\eta^\phi_{k-1}(s)|^1_0} \phi ( Y_{k-1}^\phi(s,z))\, \di z\Big|^p\\
&+|Z_k^\phi(s)- Z_{k-1}^\phi(s)|^p\Big].
\ea
Furthermore
we get
\ba
\Big|\int _0^{\eta^\phi_k(s)|^1_0}& \phi ( Y_k^\phi(s,z))\, \di z-\int _0^{  \eta^\phi_{k-1}(s)|^1_0} \phi ( Y_{k-1}^\phi(s,z))\,\di z\Big|^p\\
&\leq 2^{p-1}\Big|\int _0^{\eta^\phi_k(s)|^1_0} \Big(\phi ( Y_k^\phi(s,z))- \phi ( Y_{k-1}^\phi(s,z))\Big)\, \di z\Big|^p 
+2^{p-1}\Big| \int_{  \eta^\phi_{k-1}(s)|^1_0}^{  \eta^\phi_k(s)|^1_0} \phi ( Y_{k-1}^\phi(s,z))\, \di z\Big|^p\\
&\leq 2^{p-1} \int _0^1  | Y_k^\phi(s,z) -   Y_{k-1}^\phi(s,z)|^p\, \di z +2^{p-1}C_{\phi}^p \big| \eta^\phi_{k-1}(s)|^1_0-  \eta^\phi_k(s)|^1_0\big|^p\\
&\leq 2^{p-1}   \int_0^1| Y_k^\phi(s,x) -   Y_{k-1}^\phi(s,x)|^p\,\di x 
+2^{p-1}C_{\phi}^p | \eta^\phi_{k-1}(s) -  \eta^\phi_k(s) |^p.
\ea
This gives us for a certain $C_4>0$ depending on $\phi$ that
\ba
\label{estimateC_4}
\E_{X_0,\eta_0}\sup_{0\leq s\leq t}\abs{  \eta^\phi_{k+1}(s) -  \eta^\phi_{k}(s) }^p\leq C_4 \int^t_0 G_k(s)\,\di s.
\ea
Combining the above estimates we find that
\ba
G_{k+1}(t)\leq C_5  \int_0^t G_k(s)\, \di s ,
\ea
for some constant $C_5>0$ that depends on $\phi$ and $T$.
Since
\ba
G_0(t):= \int_0^1 | X_{0}(x)|^p\,\di x +|X_0(0)|^p+  \int_0^1 | \nabla X_{0}(x)|^p\,\di x    +|\eta_0|^p:= C_0<\infty
\ea
it is easy to verify inductively that
\ba
\sum_{k=0}^\infty G_{k}(T)<\infty. 
\ea
Therefore there exist limit $L^p$-integrable processes $X^\phi$, $Y^\phi$ and a limit process $\eta^\phi$.
It is easy to verify that they satisfy equation \eqref{diffeqtrun}. 
The solutions are continuous by Kolmogorov's continuity criterion. We omit the details here.
\end{proof}

\begin{lem} 
\label{W1grad}
For any $T>0$, $p\geq 2$ and $\phi$ satisfying \eqref{e:phi}
\begin{align}
\label{e:estYphi}
&\E_{X_0,\eta_0}\, \sup_ {t\leq T}\int^1_0| Y^\phi(t,x) |^p \, \di x \leq C_{\nabla}(p,T)\Big(1+ \int_0^1 |\nabla X_0(x)|^p\,\di x\Big),\\
\label{e:estNablaXphi}
&\E_{X_0,\eta_0}\, \sup_ {t\leq T}\int^1_0|\nabla  X^\phi(t,x)|^p\, \di x  \leq C_{\nabla}(p,T)\Big(1+ \int_0^1 |\nabla X_0(x)|^p\,\di x\Big),
\end{align}
where the constant $C_{\nabla}(p,T)>0$ does not depend on $C_\phi$.
\end{lem}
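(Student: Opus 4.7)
The plan is to handle the two bounds \eqref{e:estYphi} and \eqref{e:estNablaXphi} in parallel, since both are linear SDEs with bounded affine coefficients. The crucial point is to obtain a \emph{pointwise} (in $x$) moment estimate first, then integrate against $\di x$ using Tonelli (cf.\ Lemma~\ref{l:int}) to interchange the supremum over time with the integral over $[0,1]$. The key observation ensuring the constants do not depend on $C_\phi$ is that we never use the bound $|\phi(y)|\leq C_\phi$; instead we systematically exploit $|\phi(y)|\leq |y|$ (which follows from $\phi(0)=0$ and $\mathrm{Lip}(\phi)\leq 1$ in \eqref{e:phi}), together with the fact that $F_x, F_X, \|\Sigma_x\|_{l^2}$ and $\|\Sigma_X\|_{l^2}$ are bounded by Assumption~$\mathbf{A}_2$.

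For \eqref{e:estYphi}, fix $x\in[0,1]$ and apply the usual $3^{p-1}$-splitting to the SDE for $Y^\phi(\cdot,x)$. To the Lebesgue integral I apply Jensen's inequality, and to the stochastic integral the Burkholder--Davis--Gundy inequality, which reduces everything to $\di u$-integrals of $\|F_x\|_\infty^p + \|F_X\|_\infty^p |\phi(Y^\phi)|^p$ and $\|\Sigma_x\|_{l^2}^p + \|\Sigma_X\|_{l^2}^p |\phi(Y^\phi)|^p$. Using $|\phi(Y^\phi)|\leq |Y^\phi|$ and the boundedness of the coefficients one obtains, for some $C_1(p,T),C_2(p,T)>0$ independent of $x$ and of $C_\phi$,
\ba
\E_{X_0,\eta_0}\sup_{s\leq t}|Y^\phi(s,x)|^p
\leq 3^{p-1}|\nabla X_0(x)|^p + C_1(p,T) + C_2(p,T)\int_0^t \E_{X_0,\eta_0}\sup_{u\leq r}|Y^\phi(u,x)|^p\,\di r.
\ea
Gronwall's lemma yields $\E_{X_0,\eta_0}\sup_{s\leq T}|Y^\phi(s,x)|^p \leq C(p,T)(1+|\nabla X_0(x)|^p)$ uniformly in $x$; integrating over $x$ and bringing the supremum inside the integral (Tonelli) gives \eqref{e:estYphi}.

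For \eqref{e:estNablaXphi} I use that, by Lemma~\ref{W1ptrun}, the map $x\mapsto X^\phi(t,x)$ is $C^2$, so formal differentiation in $x$ of the equation for $X^\phi$ in \eqref{diffeqtrun} is justified (note that $Z^\phi(s)$ is independent of $x$). This produces the variational SDE
\ba
\nabla X^\phi(t,x) = \nabla X_0(x) + \int_0^t \bigl(F_x + F_X \nabla X^\phi(s,x)\bigr)\,\di s + \sum_{j=1}^\infty \int_0^t \bigl(\Sigma^j_x + \Sigma^j_X \nabla X^\phi(s,x)\bigr)\,\di B^j(s),
\ea
which has the same structure as the equation for $Y^\phi$, but with $\phi(Y^\phi)$ replaced by the unbounded $\nabla X^\phi$. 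Since the subsequent Gronwall-type argument uses only linear growth in $\phi(Y^\phi)$ --- not its boundedness --- the very same chain of estimates (Burkholder, Jensen, pointwise Gronwall, Tonelli) applies verbatim, giving \eqref{e:estNablaXphi} with a constant that again does not depend on $C_\phi$. The main (mild) obstacle is bookkeeping: one must keep the $C_\phi$-independent bound $|\phi(y)|\leq |y|$ in play throughout and never drop to $|\phi(y)|\leq C_\phi$, and must carefully justify the interchange $\E\sup\int = \int\E\sup$ in the final step via the non-negativity statement of Lemma~\ref{l:int}.
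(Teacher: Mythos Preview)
Your argument is correct and uses the same key ingredients as the paper (boundedness of $F_x,F_X,\|\Sigma_x\|_{l^2},\|\Sigma_X\|_{l^2}$ from $\mathbf{A}_2$, the $C_\phi$-free bound $|\phi(y)|\leq |y|$, Burkholder, Gronwall). The only difference is the order of operations: you run Gronwall pointwise in $x$ and then integrate, whereas the paper works directly in the integrated form by simply invoking the inequality \eqref{e:Y} (derived in Lemma~\ref{l:boun} for the approximations, with the constant $C_3$ independent of $C_\phi$) for the limit $Y^\phi$ and for $\nabla X^\phi$; for the latter, the paper adds a word about localization and Fatou to justify finiteness, which you could also mention. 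One small correction: your appeal to Lemma~\ref{l:int} in the last step is misplaced---that lemma concerns the identity $\sup_s\int_0^1\int_0^s f = \int_0^1\sup_s\int_0^s f$ for nonnegative $f$, which is not what you need here. What you actually use is the trivial inequality $\E\sup_{t\leq T}\int_0^1 |Y^\phi(t,x)|^p\,\di x \leq \int_0^1 \E\sup_{t\leq T}|Y^\phi(t,x)|^p\,\di x$, which follows from $\sup\int\leq\int\sup$ and Tonelli, with no monotonicity hypothesis required.
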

\begin{proof}
For $p\geq 2$ and $t\in[0,T]$ we recall the inequality \eqref{e:Y} that holds for $Y^\phi$ as a limit of $\{Y_k\}$ as well as for $\nabla X$. In the 
latter case one may have to apply the localization argument together with the Fatou lemma. We note that the constant $C_3$ in \eqref{e:Y}
does not depend on $C_\phi$.
%
\end{proof}

\begin{lem}
\label{l:nablaY}
For any $T>0$, $p\geq 2$ and any $\phi$ satisfying \eqref{e:phi} there is a constant $C_{\nabla Y}(p,T)>0$ such that
\ba
\label{e:nablaYp}
\E_{\eta_0, X_0 } &\,\sup_ {t\leq T}\int_0^1 |\nabla Y^{\phi}(t,x)|^p \,\di x 
\leq C_{\nabla Y}(p,T)\Big(1+ \int_0^1 |\nabla^2 X_0(x)|^p\,\di x 
+\int_0^1 |\nabla X_0(x)|^{2p}\,\di x  \Big) .
\ea
where the constant $C_{\nabla Y}(p,T)>0$ does not depend on $C_\phi$.
\end{lem}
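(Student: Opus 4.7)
The plan is to derive a linear SDE for $\nabla Y^\phi$ by formally differentiating the equation for $Y^\phi$ in \eqref{diffeqtrun} with respect to $x$, to estimate it pointwise in $x$ via Burkholder--Davis--Gundy (BDG) combined with Gr\"onwall, and finally to integrate over $x$. Since $\eta^\phi$ and $Z^\phi$ do not depend on $x$ while $X^\phi(s,x)$ does, direct differentiation will yield
\ba
\label{nablaYSDE}
\nabla Y^\phi(t,x)&=\nabla^2 X_0(x)+\int_0^t A(s,x)\,\di s+\int_0^t B(s,x)\,\nabla Y^\phi(s,x)\,\di s\\
&\qquad +\sum_{j=1}^\infty\int_0^t C^j(s,x)\,\di B^j(s)+\sum_{j=1}^\infty\int_0^t D^j(s,x)\,\nabla Y^\phi(s,x)\,\di B^j(s),
\ea
where, evaluating all derivatives of $F$ and $\Sigma^j$ at $(x,\eta^\phi(s)|_0^1,X^\phi(s,x),Z^\phi(s))$,
\ba
A&=F_{xx}+F_{xX}\nabla X^\phi+(F_{xX}+F_{XX}\nabla X^\phi)\phi(Y^\phi),\quad B=F_X\,\phi'(Y^\phi),\\
C^j&=\Sigma^j_{xx}+\Sigma^j_{xX}\nabla X^\phi+(\Sigma^j_{xX}+\Sigma^j_{XX}\nabla X^\phi)\phi(Y^\phi),\quad D^j=\Sigma^j_X\,\phi'(Y^\phi).
\ea

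The key observation is that, since $\|\phi'\|_\infty\leq 1$ and $F_X,\,\|\Sigma_X\|_{l^2}$ are bounded by Assumption $\mathbf{A}_2$, the coefficients $|B|$ and $\|D\|_{l^2}$ of $\nabla Y^\phi$ in \eqref{nablaYSDE} are bounded by a constant \emph{not depending on $C_\phi$}. Moreover, using the tight bound $|\phi(y)|\leq |y|$ (rather than $|\phi(y)|\leq C_\phi$), and the boundedness of $F_{xx},F_{xX},F_{XX}$ and $\|\Sigma_{xx}\|_{l^2},\|\Sigma_{xX}\|_{l^2},\|\Sigma_{XX}\|_{l^2}$ from $\mathbf{A}_2$, the forcing terms satisfy
\ba
|A(s,x)|+\|C(s,x)\|_{l^2}\leq K\Big(1+|\nabla X^\phi(s,x)|+|Y^\phi(s,x)|+|\nabla X^\phi(s,x)|\cdot|Y^\phi(s,x)|\Big),
\ea
with $K$ independent of $C_\phi$. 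Setting $g(t,x):=\E_{X_0,\eta_0}\sup_{s\leq t}|\nabla Y^\phi(s,x)|^p$, Jensen's inequality for the drift integrals and BDG for the stochastic integrals will produce at each fixed $x$
\ba
g(t,x)\leq C_{p,T}|\nabla^2 X_0(x)|^p+C_{p,T}\int_0^t\!\!\big(\E|A(s,x)|^p+\E\|C(s,x)\|_{l^2}^p\big)\,\di s+C_{p,T}\int_0^t g(s,x)\,\di s.
\ea
Gr\"onwall at fixed $x$ followed by Fubini gives
\ba
\E_{X_0,\eta_0}\sup_{s\leq t}\int_0^1|\nabla Y^\phi|^p\,\di x\leq C_{p,T}\Big(\|\nabla^2 X_0\|^p_{L^p}+\int_0^t\!\!\int_0^1\!\!\big(\E|A|^p+\E\|C\|^p_{l^2}\big)\,\di x\,\di s\Big).
\ea

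The only nontrivial forcing contribution is the cross product $|\nabla X^\phi|^p|Y^\phi|^p$, which by Cauchy--Schwarz in $x$ and in $\omega$ satisfies
\ba
\E\int_0^1|\nabla X^\phi(s,x)|^p|Y^\phi(s,x)|^p\,\di x
\leq\Big(\E\int_0^1|\nabla X^\phi|^{2p}\,\di x\Big)^{1/2}\Big(\E\int_0^1|Y^\phi|^{2p}\,\di x\Big)^{1/2}.
\ea
Lemma~\ref{W1grad} applied with exponent $2p\geq 2$ then bounds both factors by a constant multiple of $(1+\int_0^1|\nabla X_0|^{2p}\,\di x)$, with constants independent of $C_\phi$; combining all estimates delivers \eqref{e:nablaYp}.

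The main obstacle is rigorously justifying the formal differentiation leading to \eqref{nablaYSDE}. The plan here is to follow the standard route for variational SDEs (cf.\ Kunita \cite{Kunita97}): first, consider the \emph{linear} SDE \eqref{nablaYSDE} as an equation in an unknown process $\widetilde Y$, whose unique solution exists by Picard iteration since its coefficients in $\widetilde Y$ are bounded and its inhomogeneous terms have finite $L^p$-moments thanks to Lemmas \ref{l:boun} and \ref{W1grad}; second, show via uniform moment estimates that the finite-difference quotients $h^{-1}(Y^\phi(t,x+h)-Y^\phi(t,x))$ converge in $L^p$ to $\widetilde Y(t,x)$ as $h\to 0$; finally, invoke Kolmogorov's continuity criterion to identify $\widetilde Y$ with a continuous modification of $\nabla Y^\phi$, closing the proof.
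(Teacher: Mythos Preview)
Your proof is correct and follows essentially the same route as the paper: derive the linear SDE for $\nabla Y^\phi$, exploit $|\phi'|\le 1$ and $|\phi(y)|\le |y|$ so that all constants are independent of $C_\phi$, bound the cross term $|\nabla X^\phi|^p|Y^\phi|^p$ via Lemma~\ref{W1grad} with exponent $2p$, and close with Gr\"onwall. The only cosmetic difference is that you run the Gr\"onwall argument pointwise in $x$ and then integrate, whereas the paper integrates in $x$ first; your added discussion of how to justify the differentiation rigorously is a point the paper simply takes for granted.
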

\begin{proof}
 The gradient $ \nabla Y^\phi(t,x)$ satisfies the differential equation
\ba
\nabla  Y^{\phi}(t,x)= \nabla^2 X_0(x)+ \int ^t_0 &\Big[ F_{xx}+ F_{xX}\cdot \nabla  X^{\phi}(s,x)+ F_{xX}\cdot\phi( Y^{\phi}(s,x))\\
&+  F_{XX}\cdot \nabla  X^{\phi}(s,x)\cdot \phi( Y^{\phi}(s,x))
+  F_{X}\cdot  \phi^\prime( Y^\phi(s,x))\cdot \nabla  Y^{\phi}(s,x)\Big]\,\di s\\
+ \sum_{j=1}^\infty\int^t_0 & \Big[\Sigma^j_{xx}+ \Sigma^j_{xX}\cdot  \nabla  X^{\phi}(s,x)+\Sigma^j_{xX}\cdot \phi( Y^{\phi}(s,x))\\
&+ \Sigma^j_{XX}\nabla X^{\phi}(s,x)\cdot \phi( Y^{\phi}(s,x))+  \Sigma^j_{X} \cdot \phi'( Y^{\phi}(s,x))\cdot \nabla  Y^{\phi}(s,x)\Big]\,\di B^j(s).
\ea
Repeating the argument of Lemma \ref{l:boun}, the bounds of Lemma \ref{W1grad}, and 
the fact that $|\phi(x)|\leq |x|$ and $|\phi'(x)|\leq 1$ we get
\ba
\E_{X_0,\eta_0} &\,\sup_ {s\leq t}\int^1_0 |\nabla  Y^{\phi}(s,x)|^p\, \di x\\
&\leq C_1\int^1_0 |\nabla ^2 X_0(x)|^p\,\di x+ C_2+ C_3\int^t_0\E_{X_0,\eta_0}\int^1_0|\nabla  X^\phi(s,x)|^p\, \di x\,\di s\\
&+  C_4\int^t_0\E_{X_0,\eta_0}\int^1_0 |\phi( Y^\phi(s,x))|^p\,\di x\,\di s
+C_5\int^t_0\E_{X_0,\eta_0}\int^1_0 |\nabla  X^\phi(s,x)\phi( Y^\phi(s,x))|^p\, \di x\,\di s\\
&+C_6\int^t_0 \E_{X_0,\eta_0} \int^1_0  |\phi'( Y^\phi(s,x))|^p|\nabla  Y^\phi(s,x)|^p\, \di x\,\di s \\
&\leq 
C_1\int^1_0 |\nabla ^2 X_0(x)|^p\,\di x+ C_2+ C_3\int^t_0\E_{X_0,\eta_0}\int^1_0|\nabla  X^\phi(s,x)|^p\, \di x\,\di s\\
&+  C_4\int^t_0\E_{X_0,\eta_0}\int^1_0 |Y^\phi(s,x)|^p\,\di x\,\di s
+C_5\int^t_0\E_{X_0,\eta_0}\int^1_0 |\nabla  X^\phi(s,x)|^p|Y^\phi(s,x)|^p\, \di x\,\di s\\
&+C_6\int^t_0 \E_{X_0,\eta_0} \int^1_0 |\nabla  Y^\phi(s,x)|^p\, \di x\,\di s \\
&\leq 
C_1\int^1_0 |\nabla ^2 X_0(x)|^p\,\di x+ C_2+ C_7\Big(1+\int^1_0|\nabla  X_0(x)|^p\, \di x\Big)\\
&+C_8 \Big(1+\int^1_0|\nabla  X_0(x)|^{2p}\, \di x\Big)
+C_6\int^t_0 \E_{X_0,\eta_0} \sup_ {u\leq s}\int^1_0 |\nabla  Y^\phi(u,x)|^p\, \di x\,\di s .
%
\ea
Using the elementary inequality
\ba
\int^1_0|\nabla  X_0(x)|^p\, \di x\leq \int^1_0 (|\nabla  X_0(x)|+1)^p\, \di x
\leq  \int^1_0 (|\nabla  X_0(x)|+1)^{2p}\, \di x\leq   \int^1_0 (|\nabla  X_0(x)|+1)^{2p}\, \di x
\ea
and applying Gronwall's inequality yields the result.
\end{proof}

\subsection{Proof of Theorem \ref{existence}}

For $n\in\mathbb N$, let
\ba
\label{e:phi-exa}
\phi_n(x)&=\begin{cases}
\displaystyle           x,\quad x\in[0,n],\\
\displaystyle           n+ \int_0^{x-n} \Big( 1-\frac{2 \arctan(y^3)}{\pi}\Big)\,\di y,\quad x\in(n,+\infty),\\
          \end{cases}\\
\phi_n(x)&=-\phi_n(-x),\quad x<0.          
\ea
It is clear that each $\phi_n$, $n\in\mathbb N$, satisfies conditions \eqref{e:phi} with $C_{\phi_n}=n+2/\sqrt 3$.
Let 
\ba
\tau_n=\inf_{t\geq 0}\Big\{ \sup_ {x \in [0,1]}|Y^{\phi_n}(t,x)|\geq n\Big\} \wedge T.
\ea
For all $t\in[0, \tau_n]$ we have $\phi_n ( Y^{\phi_n}(t,x) )=  Y^{\phi_n}(t,x)$ and therefore for $t\in[0, \tau_n]$
\ba
\eta(t)&=\eta^{\phi_n}(t),\\
X(t,x)&=X^{\phi_{n}}(t,x),\\
\nabla X(t,x)&=Y^{\phi_{n}}(t,x).
\ea
Moreover, on $[0, \tau_n]$ we have $Y^{\phi_n}=Y^{\phi_{m}}$ for all $m>n$.

Let us show that $\tau_n\to T$ in probability as $n\to\infty$.
Indeed, for any $T>0$ and $p\geq 2$, 
due to Lemmas \ref{W1grad} and \ref{l:nablaY} there is a constant $C=C(p,T,X_0)>0$ such that for all $n\in\mathbb N$
\ba
\label{W1pnormY}
 \E_{X_0,\eta_0} \sup_ {t\leq T} \| Y^{\phi_n}(t,\cdot)\|_{W^{1,p}}\leq C .
\ea
Hence by Markov's inequality we get
 \ba
\P_{X_0,\eta_0}(\tau_n\leq T)
&= \P_{X_0,\eta_0}\Big(\sup_ {t\leq T} \sup_ {x \in [0,1]}| Y^{\phi_n}(t,x)|\geq n\Big)\\
&\leq \frac{1}{ n} \E_{X_0,\eta_0} \sup_ {t\leq T} \sup_ {x \in [0,1]} | Y^{\phi_n}(t,x)|\\
&\leq  \frac{C_\text{Sob}(1,p)}{ n}\E_{X_0,\eta_0}\,\sup_ {t\leq T} \| Y^{\phi_n}(t,\cdot)\|_{W^{1,p}}
\to 0,\quad n\to\infty.
\ea

\section{Proof of Theorem \ref{etain01}\label{s:01}}

Let $(X,\eta)$ be the solution of \eqref{diffeq} with the initial values $X_0\in C^2([0,1],\bR)$ and $\eta_0\in (0,1)$.
We show that $\eta$ does not hit $0$ or $1$ on $[0,T]$. 

Let 
\ba
\rho(t):=-\ln\eta(t) \in [0,+\infty].
\ea
We show that $\rho(t)<+\infty$ a.s.\ on $[0,T]$.
Let for $N\geq 1$
\ba
\sigma_N:=\inf\{t\geq 0\colon \rho(t)>N\}\wedge T.
\ea
By the It\^o formula, 
\ba
\rho(t\wedge \sigma_N)=-\ln \eta_0 
&-\int_0^{t\wedge\sigma_N} \frac{f(\eta(s), X(s,\eta(s)),Z(s))}{\eta(s)}   \, \di s \\
&- \int_0^{t\wedge \sigma_N}\frac{\sigma(\eta(s), X(s,\eta(s)),Z(s))}{\eta(s)}  \,  \di W(s)\\
&+\frac12\int_0^{t\wedge \sigma_N}\frac{\sigma(\eta(s), X(s,\eta(s)),Z(s))^2}{\eta(s)^2}\,   \di s .
\ea
Let $C>0$ denote the Lipschitz constant for $f$ and $\sigma$.
By assumption $\mathbf{A}_4$, $f(0,X,Z)\geq 0$ and $\sigma(0,X,Z)=0$, hence
\ba
\label{e:f01}
-\frac{f(\eta,X,Z)}{\eta}&=-\frac{f(0,X,Z)}{\eta} + \frac{f(0,X,Z) - f(\eta,X,Z)}{\eta}
\leq -\frac{f(0,X,Z)}{\eta} + C\leq C
\ea
and
\ba
\label{e:s01}
\Big|\frac{\sigma(\eta, X,Z)}{\eta}\Big|&=\Big|\frac{\sigma(\eta, X,Z)-\sigma(0, X,Z) }{\eta}\Big|\leq C.
\ea
Consequently, with the help of the Burkholder inequality we get
\ba
\E_{X_0,\eta_0} \sup_{t\leq T}\rho(t\wedge\sigma_N)\leq -\ln \eta_0 + \Big(C+\frac12C^2\Big)T +C\sqrt T=:C(T,\eta_0), 
\ea
and by Fatou's Lemma
\ba
\E_{X_0,\eta_0} \sup_{t\leq T}\rho(t)\leq C(T,\eta_0).
\ea
Therefore, $\eta(t)>0$ a.s.\ on $[0,T]$.
The bound $\eta(t)<1$ a.s.\ is obtained analogously.

\section{Proof of Theorem~\ref{wnormmomentsboundthm}\label{s:stability}}

The estimate \eqref{e:estX1} of Theorem~\ref{wnormmomentsboundthm} will follow from  
Lemmas \ref{nablamomentsboundlem} and Lemma \ref{momentsboundlem}.
The estimate \eqref{e:estnablaX} of Theorem~\ref{wnormmomentsboundthm} will follow from  
Lemmas \ref{nablamomentsboundlem} and \ref{nabla2momentsboundlem}.

\begin{lem}
\label{nablamomentsboundlem}
Let Assumptions $\mathbf{A}_1$--$\mathbf{A}_5$ hold true, $\eta_0\in (0,1)$ and $X_0\in C^2([0,1],\bR)$.
Then there is $p'>4$ such that for any $p\in[2,p']$ there is a constant $C_{\nabla X}(p)>0$ such that for all $\e\in(0,1]$
\ba
\label{EnablaXepsbound}
 \sup_ {t\geq 0} 
 \E_{X_0,\eta_0} \|\nabla X^\e(t,\cdot)\|^{p}_{L^p} \leq C_{\nabla X}(p) \Big(1+  \|\nabla X_0(\cdot)\|^{p}_{L^p}\Big).
\ea
\end{lem}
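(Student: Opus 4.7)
The strategy is to derive a pointwise (in $x$) differential inequality for $\phi^\e(t,x) := \E_{X_0,\eta_0} |\nabla X^\e(t,x)|^p$ that is genuinely dissipative thanks to assumption $\mathbf{A}_6$, and then integrate over $x \in [0,1]$ to convert the pointwise bound into the required $L^p$ estimate.

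First I would differentiate the SDE for $X^\e$ in \eqref{diffeq-eps} formally in $x$, exactly as in the variational computation carried out for the truncated system in Section \ref{s:existence}. This yields
\ba
d(\nabla X^\e(t,x))
&= \frac{1}{\e}\bigl(F_x + F_X\,\nabla X^\e(t,x)\bigr)\,\di t
 + \frac{1}{\sqrt{\e}}\sum_{j=1}^\infty \bigl(\Sigma^j_x + \Sigma^j_X\,\nabla X^\e(t,x)\bigr)\,\di B^j(t),
\ea
the arguments of the coefficients being $(x,\eta^\e(t),X^\e(t,x),Z^\e(t))$. Applying It\^o's formula to $|\nabla X^\e(t,x)|^p$, localizing by a sequence $\tau_n\uparrow\infty$ (as in Section \ref{s:existence}) to kill the stochastic integral, taking expectations, and then passing to the limit via Fatou, I obtain
\ba
\frac{\di}{\di t}\phi^\e(t,x)
&\leq \frac{p}{\e}\E\Bigl[|\nabla X^\e|^{p-2}\nabla X^\e\bigl(F_x + F_X\nabla X^\e\bigr)
+ \tfrac{p-1}{2}|\nabla X^\e|^{p-2}\|\Sigma_x+\Sigma_X\nabla X^\e\|_{l^2}^2\Bigr].
\ea

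Expanding $\|\Sigma_x+\Sigma_X\nabla X^\e\|_{l^2}^2$ and collecting powers of $|\nabla X^\e|$, the top-order term is
\ba
p\,|\nabla X^\e|^p\Bigl(F_X + \tfrac{p-1}{2}\|\Sigma_X\|_{l^2}^2\Bigr).
\ea
Using assumption $\mathbf{A}_6$ I write
\ba
F_X + \tfrac{p-1}{2}\|\Sigma_X\|_{l^2}^2
= \bigl(F_X + \tfrac{3}{2}\|\Sigma_X\|_{l^2}^2\bigr) + \tfrac{p-4}{2}\|\Sigma_X\|_{l^2}^2
\leq -\delta + \tfrac{p-4}{2}M,
\ea
where $M := \sup\|\Sigma_X\|_{l^2}^2 < \infty$ by $\mathbf{A}_2$. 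This bound is strictly negative for every $p$ in the open interval $[2,p^*)$ with $p^* := 4 + 2\delta/M > 4$, which is precisely the exponent range claimed in the statement. The remaining terms involving $F_x$, $\|\Sigma_x\|_{l^2}^2$ and the cross term $\langle\Sigma_x,\Sigma_X\rangle\nabla X^\e$ are polynomials in $|\nabla X^\e|$ of degree at most $p-1$, with coefficients bounded by $\mathbf{A}_2$ and $\mathbf{A}_5$, so they can be absorbed into the leading negative term by Young's inequality at the cost of a constant additive remainder.

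Putting everything together I arrive at a scalar ODE inequality of the form
\ba
\frac{\di}{\di t}\phi^\e(t,x) \leq -\frac{\alpha}{\e}\phi^\e(t,x) + \frac{\beta}{\e},
\ea
with constants $\alpha,\beta>0$ independent of $\e,t,x$. Integrating gives $\phi^\e(t,x) \leq |\nabla X_0(x)|^p + \beta/\alpha$ uniformly in $t\geq 0$ and $\e\in(0,1]$, and integrating over $x\in[0,1]$ via Fubini yields the desired bound \eqref{EnablaXepsbound}. The main obstacle I anticipate is the rigorous localization step: since the diffusion coefficient of the variational equation contains the unbounded factor $\nabla X^\e$, the martingale term is only a local martingale, and I must ensure that the stopping times provided by the construction in Section \ref{s:existence} are compatible with taking the limit $n\to\infty$ inside the expectation in a uniform (in $\e$ and $t$) manner. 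A secondary technical point is checking that the $p'>4$ range really survives the absorption of cross terms into the negative leading term; this requires tracking the constants carefully to guarantee that the effective dissipation coefficient remains strictly positive.
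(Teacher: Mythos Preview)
Your proposal is correct and follows essentially the same route as the paper: derive the variational SDE for $\nabla X^\e$, apply It\^o's formula to $|\nabla X^\e|^p$, use the decomposition $F_X+\tfrac{p-1}{2}\|\Sigma_X\|_{l^2}^2=(F_X+\tfrac32\|\Sigma_X\|_{l^2}^2)+\tfrac{p-4}{2}\|\Sigma_X\|_{l^2}^2$ together with $\mathbf{A}_6$ to obtain a strictly negative top-order coefficient for $p$ slightly above $4$, absorb the lower-order terms, and solve the resulting linear differential inequality. The only cosmetic difference is that the paper integrates over $x$ before solving the inequality rather than after, and it chooses $p'=4+\delta/(\|\Sigma_X\|_{l^2}^2+1)$ to avoid a possible division by zero in your $p^*=4+2\delta/M$; your anticipated localization and constant-tracking concerns are handled exactly as you outline.
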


\begin{proof}
Let $\delta>0$ be the constant from  Assumption $\mathbf{A}_6$.
Let $p'=4+\frac{\delta}{\|\Sigma_X\|_{l^2}^2+1}$.
Due to Assumption $\mathbf{A}_6$, for $p\in[2,p']$ we have
\ba
\label{e:estF}
F_X &+  \frac{p-1}{2} \|\Sigma_X\|_{l^2}^2
= F_X+\frac32 \|\Sigma_X\|_{l^2}^2 + \frac{p-4}{2}\|\Sigma_X\|_{l^2}^2
\leq -\frac{\delta}{2}<0. 
\ea
Recall that the process $\nabla  X^\e$ is a solution of the variational equation 
\ba
\label{diffepnablaXeps}
\nabla  X^\e (t,x)=\nabla X_0(x)
 +\frac{1}{\e}\int_0^t \Big(  F_x   +  F_ X\nabla  X^\e (s,x)  \Big)  \,\di s 
 +\frac{1}{\sqrt{\e}}\sum_{j=1}^\infty \int_0^t \Big( \Sigma^j_x  +   \Sigma^j_X \cdot \nabla  X^\e (s,x) \Big) \, \di B^j(s).
\ea
We apply the It\^o formula to the function $x\mapsto |x|^p$, which is twice continuously differentiable for $p\geq 2$. We get
\ba
\label{e:nX}
|\nabla X^\e(t,x)|^{p}
&=|\nabla X_0(x)|^{p}
+  \frac{p}{\e}\int^t_0 \Big(F_x \cdot|\nabla X^\e(s,x)|^{p-1} \sgn(\nabla X^\e(s,x)) + F_ X \cdot |\nabla X^\e(s,x)|^p\Big)  \,\di s \\
&+\frac{p}{\sqrt{\e}}\sum_{j=1}^\infty
\int_0^t\Big(\Sigma^j_x\cdot |\nabla X^\e(s,x)|^{p-1} \sgn(\nabla X^\e(s,x)) + \Sigma^j_X \cdot |\nabla X^\e(s,x)|^p \Big)  \, \di B^j(s) \\
&+\frac {p(p-1)} { 2\e}\int_0^t |\nabla X^\e(s,x)|^{p-2}\sum_{j=1}^\infty \Big( \Sigma^j_x  + \Sigma^j_X  \nabla  X^ \e (s,x)   \Big)^2\, \di s.
\ea
Furthermore,
\ba
\sum_{j=1}^\infty \Big( \Sigma^j_x  + \Sigma^j_X  \nabla  X^ \e (s,x)   \Big)^2
\leq  \|\Sigma_x\|^2_{l^2} 
+ |\nabla  X^ \e (s,x)|^2  \|\Sigma_X\|^2_{l^2}
+2 |\nabla  X^ \e (s,x)|\cdot  \|\Sigma_x|_{l^2} \cdot  \| \Sigma_X\|_{l^2} .
\ea
Therefore due to boundendness of all the derivatives, there are $\delta_1>0$ and $C_1=C_1(p)>0$ such that for all $y\in\bR$
\ba
p&\Big(F_x \cdot|y|^{p-2} y + F_ X \cdot |y|^p\Big)+\frac {p(p-1)} { 2}|y|^{p-2}\sum_{j=1}^\infty \Big( \Sigma^j_x  + \Sigma^j_X  y \Big)^2\\
&\leq p \Big(F_ X + \frac{p-1}{2}\|\Sigma_X\|^2_{l^2}\Big)|y|^p 
+ p\Big(F_x \cdot|y|^{p-2} y +\frac{p-1}{2} \|\Sigma_x\|^2_{l^2} |y|^{p-2} + (p-1)  \|\Sigma_x|_{l^2} \cdot  \| \Sigma_X\|_{l^2}\cdot |y|^{p-1}  \Big)\\
&\leq  C_1- \delta_1|y|^p.
\ea
Taking expectation in \eqref{e:nX}, integrating w.r.t.\ $x\in [0,1]$, and applying Fatou's lemma localization argument we get
\ba
\E_{X_0,\eta_0} \int_0^1 |\nabla X^\e(t,x)|^p\,\di x 
&\leq \int_0^1 |\nabla X_0(x)|^p\,\di x
 +  \frac{1}{\e} \int_0^t \Big(C_1- \delta_1 \E_{\eta_0,X_0(\cdot)} \int_0^1 |\nabla X^\e(s,x)|^p\,\di x\Big)\,\di s.
\ea
Therefore,
\ba
\E_{X_0,\eta_0} \int_0^1 |\nabla X^\e(t,x)|^p\,\di x 
&\leq \ex^{-\delta_1 t/\e} \int_0^1|\nabla X_0(x)|^p\,\di x + \frac{C_1}{\e}\int_0^t \ex^{-\delta_1(t-s)/\e}\,\di s\\
&\leq  \int_0^1|\nabla X_0(x)|^p\,\di x + \frac{C_1}{\delta_1}.
\ea
\end{proof}

\begin{lem}
\label{momentsboundlem}
Under the sets of Assumptions  $\mathbf{A}_1$--$\mathbf{A}_7$, for any $\eta_0\in (0,1)$ and $X_0\in C^2([0,1],\bR)$ 
there is $C_{X} (1,2)>0$ such that for all $\e\in(0,1] $
\ba
\label{EXepsbound}
\sup_ {t\geq 0}\E_{X_0,\eta_0}\|X^\e(t,\cdot)\|^2_{L^2} \leq C_{X}(1,2) \Big(1+ \|X_0(\cdot)\|_{L^2}^2 
+  \|\nabla X_0(\cdot)\|_{L^2}^2 
\Big).
\ea
\end{lem}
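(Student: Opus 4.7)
The plan is to apply Itô's formula to $(X^\e(t,x))^2$ coming from the SPDE-like equation in \eqref{diffeq-eps}, integrate in $x$ over $[0,1]$ and take expectation (with a standard localization and Fatou argument to handle the stochastic integral), then exploit Assumption $\mathbf{A}_7$ to produce a negative linear drift on $\E\|X^\e\|_{L^2}^2$ that dominates the cross term $\E\int_0^1 X^\e Z^\e\,\di x$ and the bounded diffusion contribution. A Gronwall/ODE comparison will then yield the time-uniform bound.

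More concretely, the key computation proceeds as follows. From Itô's formula applied to $y\mapsto y^2$ and the equation for $X^\e(t,x)$, integrating in $x$ and taking expectations gives
\ba
\frac{\di}{\di t}\E_{X_0,\eta_0}\|X^\e(t,\cdot)\|_{L^2}^2
&=\frac{2}{\e}\E_{X_0,\eta_0}\!\int_0^1 X^\e(t,x)F(x,\eta^\e(t),X^\e(t,x),Z^\e(t))\,\di x\\
&\quad+\frac{1}{\e}\E_{X_0,\eta_0}\!\int_0^1 \|\Sigma(x,\eta^\e(t),X^\e(t,x),Z^\e(t))\|_{l^2}^2\,\di x.
\ea
Assumption $\mathbf{A}_7$ bounds the first integrand by $-A(X^\e)^2+BX^\e Z^\e+C$, and $\mathbf{A}_5$ bounds $\|\Sigma\|_{l^2}^2$ by a constant $K_\Sigma$. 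Since $Z^\e(t)$ is deterministic in $x$, Cauchy--Schwarz (or $2ab\leq a^2+b^2$) yields $2\int_0^1 X^\e Z^\e\,\di x\leq \|X^\e\|_{L^2}^2+|Z^\e|^2$, and then \eqref{e:ZZZ} provides $|Z^\e|^2\leq C_\mu^2\|X^\e\|_{W^{1,2}}^2=C_\mu^2(\|X^\e\|_{L^2}^2+\|\nabla X^\e\|_{L^2}^2)$.

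Combining these estimates,
\ba
\frac{\di}{\di t}\E_{X_0,\eta_0}\|X^\e(t,\cdot)\|_{L^2}^2
\leq \frac{1}{\e}\Bigl[-\bigl(2A-B(1+C_\mu^2)\bigr)\E_{X_0,\eta_0}\|X^\e(t,\cdot)\|_{L^2}^2
+BC_\mu^2\,\E_{X_0,\eta_0}\|\nabla X^\e(t,\cdot)\|_{L^2}^2+2C+K_\Sigma\Bigr].
\ea
Here \eqref{e:ABC} in $\mathbf{A}_7$ is used exactly to guarantee $\delta':=2A-B(1+C_\mu^2)>0$, and Lemma \ref{nablamomentsboundlem} with $p=2$ controls $\sup_{t\geq 0}\E\|\nabla X^\e(t,\cdot)\|_{L^2}^2$ by $C_{\nabla X}(2)(1+\|\nabla X_0\|_{L^2}^2)$ uniformly in $\e\in(0,1]$. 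Therefore the right-hand side has the form $\e^{-1}(-\delta' u(t)+K(1+\|\nabla X_0\|_{L^2}^2))$ for $u(t):=\E\|X^\e(t,\cdot)\|_{L^2}^2$, and a scalar ODE comparison gives $u(t)\leq \ex^{-\delta' t/\e}\|X_0\|_{L^2}^2+\delta'^{-1}K(1+\|\nabla X_0\|_{L^2}^2)$, which is bounded uniformly in $t\geq 0$ and $\e\in(0,1]$.

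The main subtlety is the coupling of $X^\e$ with $Z^\e$: without the sharp Sobolev estimate \eqref{e:ZZZ} the term $\E\int X^\e Z^\e$ would not be controllable by $\|X^\e\|_{L^2}^2$ alone, and without the quantitative condition \eqref{e:ABC} the resulting linear drift coefficient would fail to be strictly negative. Everything else (Itô's formula, Fubini, the Fatou/localization step for the $\di B^j$ integrals) is routine once the dissipativity margin $\delta'>0$ is secured.
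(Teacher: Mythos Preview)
Your proposal is correct and follows essentially the same route as the paper's proof: apply It\^o's formula to $|X^\e(t,x)|^2$, use $\mathbf{A}_7$ together with $2ab\leq a^2+b^2$ and the Sobolev estimate \eqref{e:ZZZ} to obtain the drift coefficient $-(2A-B(1+C_\mu^2))<0$, feed in Lemma~\ref{nablamomentsboundlem} for $\E\|\nabla X^\e\|_{L^2}^2$, and conclude by a Gronwall/ODE comparison. The only cosmetic difference is that the paper writes the estimate in integral form rather than as a differential inequality.
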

\begin{proof}
Due to Assumption $\mathbf{A}_7$ we have that
\ba
X F(x,\eta,X,Z)\leq -AX^2 +BXZ+C
\ea
where
\ba
A>\frac{B}{2}(1+C_\mu^2).
\ea
By the It\^o formula we get
\ba
|X^\e(t,x)|^2= |X_0(x)|^2 &+ \frac{2}{\e}\int_0^t F\cdot X^\e(s,x) \,\di s 
+\frac{1}{\e}\sum_{j=1}^\infty \int_0^t  |\Sigma^j|^2\,\di s \\
&+ \frac{1}{\sqrt\e}\sum_{j=1}^\infty \int_0^t \Sigma^j\cdot  X^\e(s,x)\,\di B^j(s),
\ea
and hence
\ba
\E_{X_0,\eta_0}  |X^\e(t,x)|^2&
\leq |X_0(x)|^2 
+ \frac{2}{\e}\E_{X_0,\eta_0} \int_0^t  \Big(-A |X^\e(s,x)|^2 +B X^\e(s,x) Z^\e(s)+C + \frac12 \|\Sigma\|^2_{l^2}\Big)\,\di s .
\ea
Using the Cauchy-Schwarz inequality and the elementary inequality $xy\leq \frac12(x^2+y^2)$ we get
\ba
\E_{X_0,\eta_0} |X^\e(t,x)|^2&\leq |X_0(x)|^2\\
&+ \frac{2}{\e}\E_{X_0,\eta_0}\int_0^t \Big[\Big(-A+\frac{B}{2}\Big) |X^\e(s,x)|^{2} + \frac{B }{2} |Z^\e(s)|^2
+C+\frac 12 \|\Sigma\|^2_{l^2}\Big]\,\di s .
\ea
Integrating the last inequality w.r.t.\ $x$ and taking into account \eqref{e:ZZZ} and Lemma \ref{nablamomentsboundlem} yields
\ba
&\E_{X_0,\eta_0}\|X^\e(t,x)\|^2_{L^2} \\
&\leq \|X_0(\cdot)\|^2_{L^2}
+ \frac{2}{\e}\E_{\eta_0,X_0(\cdot)}\int_0^t \Big[\Big(  -A+\frac{B}{2}+\frac{B }{2}C_\mu^2\Big)\|X^\e(s,x)\|_{L^2}^2 
+C + \frac12\|\Sigma\|^2_{l^2}
+\frac{B}{2} C_\mu^2 \|\nabla X(s,\cdot)\|^{2}_{L^2}\Big)
\Big]\,\di s \\
&\leq 
\|X_0(\cdot)\|^2_{L^2}
+ \frac{2}{\e}\int_0^t \Big[-\delta_1\E_{X_0,\eta_0}\|X^\e(s,x)\|_{L^2}^2
+ C_1\Big(1+  \|\nabla X_0(\cdot)\|^{2}_{L^2}\Big)
\Big]\,\di s 
\ea
with some positive constants $\delta_1$ and $C_1$.
The statement follows in the same way as in the previous lemma.
\end{proof}

\begin{lem}
\label{nabla2momentsboundlem}
 
Under the sets of assumptions $\mathbf{A}_1$--$\mathbf{A}_7$ there is $p^*> 2$ such that for all $p\in[2,p^*)$ there is a $C_{\nabla^2 X}(p^*)>0$
such that for all $\e\in(0,1]$
\ba
 \sup_ {t\geq 0} \E_{X_0,\eta_0}\|\nabla^2 X^\e(t,x)\|_{L^p}^p 
 \leq C_{\nabla^2 X}(p^*) \Big(1+\|\nabla^2 X_0(\cdot)\|_{L^p}^p +\|\nabla X_0(\cdot)\|_{L^{2p^*}}^{2p^*}  \Big).
\ea
\end{lem}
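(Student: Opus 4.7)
The plan is to mirror the strategy of Lemma \ref{nablamomentsboundlem}, but now applied to $\nabla^2 X^\e$. First I would differentiate the variational equation \eqref{diffepnablaXeps} once more in $x$, which yields a linear SDE of the form
\ba
\nabla^2 X^\e(t,x) &= \nabla^2 X_0(x) + \frac{1}{\e}\int_0^t \Big[F_{xx} + 2F_{xX}\nabla X^\e(s,x) + F_{XX}(\nabla X^\e(s,x))^2 + F_X\,\nabla^2 X^\e(s,x)\Big]\,\di s \\
&\quad + \frac{1}{\sqrt\e}\sum_{j=1}^\infty \int_0^t \Big[\Sigma^j_{xx} + 2\Sigma^j_{xX}\nabla X^\e(s,x) + \Sigma^j_{XX}(\nabla X^\e(s,x))^2 + \Sigma^j_X\,\nabla^2 X^\e(s,x)\Big]\,\di B^j(s),
\ea
with coefficients evaluated at $(x,\eta^\e(s),X^\e(s,x),Z^\e(s))$. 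All derivatives of $F$ and $\|\Sigma\|_{l^2}$ up to order two are bounded by $\mathbf{A}_2$ and $\mathbf{A}_5$.

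Next I would apply It\^o's formula to $y\mapsto |y|^p$ with $p\geq 2$ evaluated at $y=\nabla^2 X^\e(t,x)$, and examine the resulting drift exactly as in the proof of Lemma \ref{nablamomentsboundlem}. The leading-order contribution in $|\nabla^2 X^\e|^p$ is
\baa
p\Big(F_X + \tfrac{p-1}{2}\|\Sigma_X\|_{l^2}^2\Big)|\nabla^2 X^\e|^p,
\ean
which by Assumption $\mathbf{A}_6$ is bounded above by $-\frac{p\delta}{2}|\nabla^2 X^\e|^p$ provided $p\leq p'$ (same threshold as before). All remaining terms are polynomial expressions in $|\nabla^2 X^\e|$ of degrees $p-1$, $p-2$ and in $|\nabla X^\e|$ of degrees $1$, $2$, $3$, $4$. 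Young's inequality with conjugate exponents $\bigl(\tfrac{p}{p-2},\tfrac{p}{2}\bigr)$ lets me absorb a small multiple of $|\nabla^2 X^\e|^p$ on the left, at the cost of producing terms of the form $C(1+|\nabla X^\e|^{2p})$; the worst such term arises from the It\^o correction of $\Sigma^j_{XX}(\nabla X^\e)^2$, namely $|\nabla^2 X^\e|^{p-2}(\nabla X^\e)^4$, for which Young's gives exactly $|\nabla X^\e|^{2p}$.

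Then I would choose $p^* := p'/2$, which is strictly greater than $2$ by the previous lemma. Integrating in $x\in[0,1]$, taking expectation and using a Fatou/localization argument as in \eqref{e:nX}, I would obtain
\ba
\E_{X_0,\eta_0}\|\nabla^2 X^\e(t,\cdot)\|_{L^p}^p
&\leq \|\nabla^2 X_0\|_{L^p}^p + \frac{1}{\e}\int_0^t\Big[-\delta_2\,\E_{X_0,\eta_0}\|\nabla^2 X^\e(s,\cdot)\|_{L^p}^p + C_1 + C_2\,\E_{X_0,\eta_0}\|\nabla X^\e(s,\cdot)\|_{L^{2p}}^{2p}\Big]\,\di s,
\ea
with $\delta_2>0$. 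Since $2p\leq 2p^*=p'$, Lemma \ref{nablamomentsboundlem} applied with exponent $2p$ bounds the last term by $C_3(1+\|\nabla X_0\|_{L^{2p}}^{2p})\leq C_3(1+\|\nabla X_0\|_{L^{2p^*}}^{2p^*})$ uniformly in $s\geq 0$. Solving the resulting differential inequality by the same integrating-factor argument as at the end of the proof of Lemma \ref{nablamomentsboundlem} gives the claimed uniform-in-$t$ and uniform-in-$\e$ bound.

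The main obstacle is the simultaneous calibration of two exponent constraints: $p$ must be small enough that the negative term $p(F_X+\tfrac{p-1}{2}\|\Sigma_X\|_{l^2}^2)$ still dominates (forcing $p\leq p'$), and $2p$ must remain within the range for which Lemma \ref{nablamomentsboundlem} provides uniform-in-time $L^{2p}$ bounds on $\nabla X^\e$ (forcing $p\leq p'/2$). The sharper constraint is the second, and setting $p^*=p'/2$ is exactly what permits both estimates to coexist; everything else is routine Young and Gronwall arithmetic.
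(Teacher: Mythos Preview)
Your plan is correct and follows essentially the same route as the paper: differentiate the variational equation once more, apply It\^o to $|\nabla^2 X^\e|^p$, use $\mathbf{A}_6$ to extract a strictly negative coefficient on $|\nabla^2 X^\e|^p$, control the lower-order cross terms by Young, feed the resulting $|\nabla X^\e|$-powers into Lemma~\ref{nablamomentsboundlem}, and close with the integrating-factor argument.

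The one noteworthy difference is in the bookkeeping of the It\^o correction. The paper first applies the crude inequalities $(a+b)^2\le 2(a^2+b^2)$ and $(a+b+c)^2\le 3(a^2+b^2+c^2)$ to isolate $\|\Sigma_X\|_{l^2}^2|\nabla^2 X^\e|^2$, which doubles the coefficient in front of $\|\Sigma_X\|_{l^2}^2$ and forces the additional restriction $p^*\le 5/2$ (so that $F_X+(p-1)\|\Sigma_X\|_{l^2}^2<0$ via $\mathbf{A}_6$). You instead keep the exact leading coefficient $\tfrac{p-1}{2}\|\Sigma_X\|_{l^2}^2$ and handle all cross terms by weighted Young, which avoids the $5/2$ cap and allows $p^*=p'/2$. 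Your version is slightly sharper but otherwise identical in structure; just make sure, when you write it out, that each Young application carries an $\epsilon$ small enough that the sum of the absorbed $|\nabla^2 X^\e|^p$ pieces does not cancel the negative drift.
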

\begin {proof}
We choose $p^*=\frac{p'}{2}\wedge \frac52$, where $p'$ has been defined in the proof of Lemma \ref{momentsboundlem}.
 
The second derivative $\nabla^2  X^\e$ satisfies the stochastic differential equation
\ba
\nabla^2  X^\e (t,x)&= \nabla^2 X_0(x)+ \frac{1}{\e}\int^t_0 \Big[ F_{xx}+ 2F_{xX} \nabla  X^\e(s,x)
+  F_{XX}|\nabla  X^\e(s,x)|^2+  F_{X} \nabla^2 X^\e(s,x)\Big]\,\di s\\
& + \frac{1}{\sqrt\e}\sum_{j=1}^\infty\int^t_0 \Big[\Sigma^j_{xx}+ 2\Sigma^j_{xX} \nabla  X^\e(s,x)
+ \Sigma^j_{XX}\cdot |\nabla  X^\e(s,x)|^2+  \Sigma^j_{X} \nabla^2  X^\e(s,x)\Big]\,\di B^j(s).\\
\ea
Applying the It\^o formula for $p\geq 2$ we get
\ba
\label{e:diff2X}
|\nabla^2  X^\e (t,x)|^p&= |\nabla^2 X_0(x)|^p\\
&+ \frac{p}{\e}\int^t_0 |\nabla^2  X^\e |^{p-2}\nabla^2  X^\e \Big[ F_{xx}+ 2F_{xX} \nabla  X^\e 
+  F_{XX}|\nabla  X^\e|^2+  F_{X} \nabla^2 X^\e \Big]\,\di s\\
& + \frac{p(p-1)}{2\e}\sum_{j=1}^\infty\int^t_0   |\nabla^2  X^\e  |^{p-2}  \Big[\Sigma^j_{xx}+ 2\Sigma^j_{xX} \nabla  X^\e 
+ \Sigma^j_{XX}\cdot |\nabla  X^\e|^2+  \Sigma^j_{X} \nabla^2  X^\e \Big]^2\,\di s\\
&+\text{local martingale}\\
&=  \frac{p}{\e}\int^t_0 I^\e(s)  \,\di s +\text{local martingale}.
\ea
We use
the elementary inequalities $(a+b)^2\leq 2(a^2+b^2)$ and $(a+b+c)^2\leq 3(a^2+b^2+c^2)$,
and collect the terms in the integrands of \eqref{e:diff2X} to get the estimate
\ba
I^\e
&\leq |\nabla^2  X^\e |^{p-2}\nabla^2  X^\e \Big[ F_{xx}+ 2F_{xX} \nabla  X^\e 
+  F_{XX}(\nabla  X^\e)^2+  F_{X} \nabla^2 X^\e\Big]\\
&+(p-1)  |\nabla^2  X^\e|^p \| \Sigma_{X}\|_{l^2}^2
+ 3(p-1) |\nabla^2  X^\e|^{p-2} \Big[  \|\Sigma_{xx}\|^2_{l^2}+ 4\| \Sigma_{xX}\|_{l^2}^2  |\nabla  X^\e|^2 
+ \|\Sigma_{XX}\|^2_{l^2} \cdot |\nabla  X^\e |^2\Big]\\
& \leq |\nabla^2  X^\e |^{p} F_X + (p-1)  |\nabla^2  X^\e|^p \| \Sigma_{X}\|_{l^2}^2\\
&\qquad +C_1\Big( |\nabla^2  X^\e |^{p-1} +  |\nabla^2  X^\e |^{p-1}| \nabla  X^\e| 
+ |\nabla^2  X^\e |^{p-1} |\nabla  X^\e|^2
+ |\nabla^2  X^\e|^{p-2} \\
&\qquad + |\nabla^2  X^\e|^{p-2} |\nabla  X^\e|^2 
+|\nabla^2  X^\e|^{p-2} |\nabla  X^\e |^4\Big),
\ea
where $C_1>0$ is the universal bound for all the bounded derivatives $F_{xx}$, $2F_{xX}$ etc.

To estimate the products on the last formula we apply
the Young inequality and estimate
\ba
&|\nabla^2 X^\e|^{p-1} \cdot \Big(|\nabla X^\e|+ |\nabla X^\e|^2\Big)
\leq \frac{p''-1}{p''}|\nabla^2 X^\e|^{\frac{p^*(p-1)}{p^*-1}}  + \frac{1}{p^*}\Big(|\nabla X^\e|+ |\nabla X^\e|^2\Big)^{p^*},\\
&|\nabla^2 X^\e|^{p-2} \cdot  \Big(|\nabla X^\e|^4 + |\nabla X^\e|^2\Big)
\leq \frac{p^*-2}{p^*}|\nabla^2 X^\e|^{\frac{p^*(p-2)}{p^*-2}}  + \frac{2}{p^*} \Big(|\nabla X^\e|^4 + |\nabla X^\e|^2\Big)^{p^*/2}.
\ea
Note that for $p<p^*$
\ba
\frac{p^*(p-1)}{p^*-1}<p\quad \text{ and }\quad \frac{p^*(p-2)}{p^*-2}<p,
\ea
so that we conclude that there are $C_2,C_3>0$ such that 
\ba
I^\e
&\leq |\nabla^2  X^\e |^{p} \Big(F_X + (p-1) \| \Sigma_{X}\|_{l^2}^2\Big)
+ C_2 +C_3 |\nabla X^\e|^{2 p^*}.
\ea
We recall \eqref{e:estF} and see that
for all $p\in[2,p^*)$ and some $\delta_1>0$
\ba
I^\e\leq  -\delta_1 |\nabla^2  X^\e |^p + C_2 + C_3 |\nabla X^\e|^{2p^*}.
\ea
Integrating w.r.t.\ $x$, taking expectation and using \eqref{EnablaXepsbound}
we get for some positive $C_4$ and $C_5$ that
\ba
\E_{X_0,\eta_0}\|\nabla^2  X^\e (t,\cdot)\|_{L^p}^p
&\leq \|\nabla^2  X_0 (\cdot)\|_{L^p}^p \\
&\qquad +\frac{p}{\e}\int_0^t 
\Big(-\delta_1 \E_{X_0,\eta_0}\|\nabla^2 X^\e(s,\cdot)\|^p_{L^p} + C_2
+ C_3 \E_{X_0,\eta_0}\|\nabla  X^\e(s,\cdot)\|^{2p^*}_{L^{2p^*}}\Big)\,\di s\\
&\leq \|\nabla^2  X_0 (\cdot)\|_{L^p}^p\\
&\qquad +
\frac{p}{\e}\int_0^t \Big(-\delta_1 \E_{X_0,\eta_0}\|\nabla^2 X^\e(s,\cdot)\|^p_{L^p} 
+C_4 + C_5 \|\nabla  X_0(\cdot)\|^{2p^*}_{L^{2p^*}}\Big)\,\di s,
\ea
and the statement follows.
\end{proof}

\section{Proof of convergence\label{s:convergence}}

In this Section we always assume that Assumptions $\mathbf{A}_1$--$\mathbf{A}_7$ hold true, $\eta_0\in (0,1)$ and $X_0(\cdot)\in C^2([0,1],\bR)$.

\subsection {Auxiliary results}

\begin{lem}
\label{etaboundthm}
For any $p\geq 2$ and $\Delta\in(0,1]$ there is a constant $C_\eta(p)>0$ such that for all $\e\in(0,1]$
\ba
\label{etabound}
\sup_ {t\geq 0,0\leq s\leq \Delta}\E_{X_0,\eta_0}|\eta^\e(t+s)- \eta^\e(t)|^p 
\leq C_\eta (p) \Delta^\frac{p}{2}. 
\ea
\end{lem}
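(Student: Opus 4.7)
The plan is to exploit the fact that under Assumption $\mathbf{A}_4$ combined with the Lipschitz continuity from $\mathbf{A}_1$, the coefficients $f$ and $\sigma$ in the SDE for $\eta^\varepsilon$ are uniformly bounded on the effective domain $\eta|_0^1 \in [0,1]$, independent of $X$ and $Z$. Indeed, arguing as in \eqref{e:f01}–\eqref{e:s01}, since $\sigma(0,X,Z)=\sigma(1,X,Z)=0$, Lipschitz continuity gives $|\sigma(\eta|_0^1,X,Z)| \leq \mathrm{Lip}(\sigma)$; and since $f(0,X,Z)\geq 0$ with $f(1,X,Z) \leq 0$, one obtains both $f(\eta|_0^1,X,Z) \geq f(0,X,Z) - \mathrm{Lip}(f) \geq -\mathrm{Lip}(f)$ and $f(\eta|_0^1,X,Z) \leq f(1,X,Z) + \mathrm{Lip}(f) \leq \mathrm{Lip}(f)$. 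Thus $\|f\|_\infty, \|\sigma\|_\infty \leq C$ uniformly.

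With this boundedness in hand, the proof is a routine moment estimate for SDE increments. For $t \geq 0$ and $s \in [0,\Delta]$, I would write
\[
\eta^\varepsilon(t+s) - \eta^\varepsilon(t) = \int_t^{t+s} f\bigl(\eta^\varepsilon(u), X^\varepsilon(u,\eta^\varepsilon(u)), Z^\varepsilon(u)\bigr)\,\di u + \int_t^{t+s} \sigma\bigl(\eta^\varepsilon(u), X^\varepsilon(u,\eta^\varepsilon(u)), Z^\varepsilon(u)\bigr)\,\di W(u),
\]
where the truncation on $\eta^\varepsilon$ is dropped by virtue of Theorem \ref{etain01}. Applying $|a+b|^p \leq 2^{p-1}(|a|^p + |b|^p)$ separates drift and diffusion contributions.

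For the drift, Jensen's inequality and boundedness give
\[
\Bigl|\int_t^{t+s} f(\cdots)\,\di u\Bigr|^p \leq s^{p-1} \int_t^{t+s} |f(\cdots)|^p\,\di u \leq \|f\|_\infty^p\, s^p \leq \|f\|_\infty^p\,\Delta^{p/2},
\]
using $\Delta \in (0,1]$ so that $\Delta^p \leq \Delta^{p/2}$. For the diffusion, the Burkholder–Davis–Gundy inequality with constant $C_{\mathrm{BDG}}(p)$ yields
\[
\E_{X_0,\eta_0} \Bigl|\int_t^{t+s} \sigma(\cdots)\,\di W(u)\Bigr|^p \leq C_{\mathrm{BDG}}(p)\, \E_{X_0,\eta_0}\Bigl(\int_t^{t+s} |\sigma(\cdots)|^2\,\di u\Bigr)^{p/2} \leq C_{\mathrm{BDG}}(p)\, \|\sigma\|_\infty^p\, \Delta^{p/2}.
\]
Combining both contributions gives \eqref{etabound} with $C_\eta(p) = 2^{p-1}\bigl(\|f\|_\infty^p + C_{\mathrm{BDG}}(p)\|\sigma\|_\infty^p\bigr)$, uniformly in $t \geq 0$ and $\varepsilon \in (0,1]$.

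There is no real obstacle: the crucial input is the $\varepsilon$-independent uniform boundedness of $f$ and $\sigma$, which is why the small parameter $\varepsilon$ that multiplies the drift and diffusion of $X^\varepsilon$ plays no role in this estimate — the slow equation for $\eta^\varepsilon$ has no $\varepsilon^{-1}$ or $\varepsilon^{-1/2}$ prefactors. The only care needed is to make sure the bounds on $f$ and $\sigma$ are independent of the (random, possibly large) values of $X^\varepsilon$ and $Z^\varepsilon$, which is exactly what $\mathbf{A}_4$ delivers.
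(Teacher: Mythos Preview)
Your proof is correct and follows essentially the same approach as the paper: both arguments rely on the uniform boundedness of $f$ and $\sigma$ (derived from $\mathbf{A}_4$ and Lipschitz continuity) together with Jensen's inequality for the drift and a Burkholder-type bound for the stochastic integral. The only cosmetic difference is that the paper first invokes the Markov property to shift the increment to start at time $0$ before estimating, whereas you work directly on $[t,t+s]$; your route is slightly more direct and equally valid.
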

\begin{proof}
For $s,t\geq 0$, by the Markov property we have
\ba
\E_{X_0,\eta_0}|\eta^\e(t+s)- \eta^\e(t)|^p
&=\E_{X_0,\eta_0}\Big[\E\Big[|\eta^\e(t+s)- \eta^\e(t)|^p\Big|\rF_t^\e\Big]\Big]\\
&=\E_{X_0,\eta_0}\Big[\E_{X^\e(t,\cdot),\eta^\e(t)}| \eta^\e(s )-\eta^\e(0)|^p \Big].
\ea
For any $\eta_0\in(0,1)$, $X_0(\cdot)\in C^2([0,1],\bR)$ we have
\ba
\E_{X_0,\eta_0}|\eta^\e(s)- \eta_0|^p
&\leq 
2^{p-1}\E_{X_0,\eta_0}\Big|\int^s_0 f(\eta^\e(r), X^\e(r,\eta^\e(r)),Z^\e (r))\,\di r\Big|^p \\
&+2^{p-1}\E_{X_0,\eta_0}\Big| \int^s_0 \sigma(\eta^\e(r), X^\e(r,\eta^\e(r)),Z^\e (r) )\,  \di W(r)\Big|^p\\
&\leq 
2^{p-1}s^{p-1} \E_{X_0,\eta_0}\int^s_0 \Big| f(\eta^\e(r), X^\e(r,\eta^\e(r)),Z^\e (r))\Big|^p \,\di r\\
&+2^{p-1}\Big(\frac{p(p-1)}{2}\Big)^{p/2} s^{(p-2)/2}\E_{X_0,\eta_0} \int^s_0 \Big|\sigma(\eta^\e(r), X^\e(r,\eta^\e(r)),Z^\e (r))\Big|^p\,  \di r\\
&\leq C_1 ( s^{p} + s^{p/2})\leq C_\eta(p)\Delta^{p/2}.
\ea
\end{proof}

For any $\e\in(0,1]$ and $\Delta=\Delta(\e)\in(0,1]$ denote
\ba
{}[s]_\Delta:=\Big\lfloor \frac{s}{\Delta} \Big\rfloor \Delta.
\ea
We define the following auxiliary processes:
\ba 
\label{diffeqXtildeetatilde}
\widetilde X^\e(t,x)
&=X^\e(k \Delta,x)+\frac{1}{\e} \int^t_{k \Delta} F(x, \eta^\e(k \Delta),\widetilde X^\e(s,x),\widetilde{Z}^\e(s))\,\di s\\
&+\frac{1}{\sqrt\e }\sum_{j=1}^\infty \int^t_{k \Delta} \Sigma^j(x,  \eta^\e(k \Delta),\widetilde X^\e(s,x),\widetilde{Z}^\e(s))\, \di B^j(s), 
\quad t\in[k \Delta, (k+1)\Delta),\quad k\geq 0,\\
\widetilde{Z}^\e(t)&=\int_0^1 \widetilde X^\e(t,x)\,\mu(\di x),\quad t\geq 0.
\ea
If $\sigma$ depends on $X$ and $Z$, i.e., $\sigma=\sigma(\eta,X,Z)$, for $t\geq 0$ we define the process
\ba
\widetilde  \eta^\e(t)&=\eta_0+\int^t_{0} f( \eta^\e([s]_\Delta),
\widetilde  X^\e(s ,\eta^\e([s]_\Delta)),\widetilde{Z}^\e(s))\,\di s 
+ \int^t_{0} \sigma ( \eta^\e([s]_\Delta),
\widetilde  X^\e(s ,\eta^\e([s]_\Delta)),\widetilde{Z}^\e(s))\,  \di W(s),\\
\ea
whereas for $\sigma=\sigma(\eta)$ we consider the process
\ba
\label{e:bar-eta}
\bar  \eta^\e(t)&=\eta_0+\int^t_{0} f( \eta^\e([s]_\Delta),
\widetilde  X^\e(s ,\eta^\e([s]_\Delta)),\widetilde{Z}^\e(s))\,\di s 
+ \int^t_{0} \sigma ( \eta^\e(s))\,  \di W(s).
\ea
The process $(\widetilde X^\e, \widetilde{Z}^\e)$ is obtained as a solution of the stochastic differential 
system \eqref{diffeq} with $f=\sigma=0$ on the intervals $[k \Delta, (k+1)\Delta]$
whereas the processes $\widetilde  \eta^\e$ and $\bar \eta^\e$ are obtained by integration.

Note that on each interval $[k\Delta,(k+1)\Delta]$
\ba
\label{e:in_law}
\text{Law}\Big(\widetilde X^\e(k\Delta+t,x),\ t\in[0,\Delta]\Big)
=\text{Law}\Big( \xi^{\eta}(t/\e,x),\ t\in [0,\Delta] \Big|\eta=\eta^\e(k\Delta), \xi^{\eta}(0,x)= X^\e(k \Delta,x)\Big).
\ea
We choose $\Delta=\Delta(\e)\in (0,1]$ such that the following holds true: for any $C>0$ and any $\gamma\in(0,1)$
\ba
\label{Deltabehavior}
&\Delta(\e)\to 0,\quad \e\to 0,\\
&\frac{\Delta(\e)}{\e} \to \infty,\quad \e\to 0,\\
&\Delta \Big(\frac{\Delta^2}{\e^2}+\frac{\Delta}{\e}\Big)
\ex^{C(\frac{\Delta^2}{\e^2}+\frac{\Delta}{\e})}= \mathcal O(\e^\gamma),\quad \e\in(0,1] .
\ea
\begin{rem}
For instance,
\ba
\Delta(\e):=\e \sqrt[4]{\ln\Big(1+\frac{1}{\e}\Big)}\wedge 1
\ea 
satisfies conditions \eqref{Deltabehavior}. 
\end{rem}
\begin{proof}
Indeed, let 
\ba
t^4=\ln\Big(1+\frac{1}{\e}\Big),\quad t\in[\ln 2,\infty).
\ea
Hence 
\ba
\e=\frac{1}{\ex^{t^4}-1}
\ea
and 
\ba
\Delta=\e \sqrt[4]{\ln\Big(1+\frac{1}{\e}\Big)}= \frac{t}{\ex^{t^4}-1}.
\ea
Therefore, for any $C>0$ and any $\gamma\in (0,1)$ there is $C_1>0$ such that the inequality
\ba
\Delta \Big(\frac{\Delta^2}{\e^2}+\frac{\Delta}{\e}\Big) \ex^{C(\frac{\Delta^2}{\e^2}+\frac{\Delta}{\e})}
= \frac{t^2(t+1)}{\ex^{t^4}-1}\ex^{C(t^2+t)}
\leq C_1 \Big(\frac{1}{\ex^{t^4}-1}\Big)^{\gamma}=C_1\e^\gamma
\ea
holds true for all $t\geq \ln 2$.
\end{proof}

\begin{lem}
There is a constant $C_X>0$ such that  for all $\e\in(0,1]$
\ba
\label{diffXXtilde}
\sup_{t\geq 0}  \sup_ {x\in [0,1]}\E_{X_0,\eta_0}
| X^\e(t,x)- \widetilde X^\e(t,x)|^2\leq C_X \sqrt\e. 
\ea

\end{lem}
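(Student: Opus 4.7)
The plan is to estimate the difference pointwise in $x$ on each interval $[k\Delta,(k+1)\Delta)$, exploiting that $\widetilde X^\e(k\Delta,\cdot)=X^\e(k\Delta,\cdot)$ at the left endpoint. A direct (not It\^o-formula-based) comparison, followed by Gronwall on each interval, will produce a bound of the form $C\Delta\bigl(\frac{\Delta^2}{\e^2}+\frac{\Delta}{\e}\bigr)\exp\bigl(C(\frac{\Delta^2}{\e^2}+\frac{\Delta}{\e})\bigr)$, which by the scaling condition \eqref{Deltabehavior} with $\gamma=1/2$ gives the required $\mathcal O(\sqrt\e)$ rate.

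First I would fix $t\geq 0$, $x\in[0,1]$ and set $k=\lfloor t/\Delta\rfloor$. Subtracting the integral equations for $X^\e$ and $\widetilde X^\e$ on $[k\Delta,t]$, using $(a+b)^2\leq 2a^2+2b^2$, Cauchy--Schwarz for the $ds$-integral and It\^o's isometry for the $dB^j$-integrals, I obtain
\ba
\E|X^\e(t,x)-\widetilde X^\e(t,x)|^2
\leq \frac{2(t-k\Delta)}{\e^2}\E\!\int_{k\Delta}^t |\Delta F|^2\,ds
+\frac{2}{\e}\E\!\int_{k\Delta}^t \|\Delta\Sigma\|_{l^2}^2\,ds,
\ea
where $\Delta F$ and $\Delta\Sigma$ denote the increments of $F$ and $\Sigma$ between the arguments $(\eta^\e(s),X^\e(s,x),Z^\e(s))$ and $(\eta^\e(k\Delta),\widetilde X^\e(s,x),\widetilde Z^\e(s))$. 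By Assumption $\mathbf{A}_1$, both integrands are bounded by a constant times $|\eta^\e(s)-\eta^\e(k\Delta)|^2+|X^\e(s,x)-\widetilde X^\e(s,x)|^2+|Z^\e(s)-\widetilde Z^\e(s)|^2$.

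Next I would control the three difference terms: Lemma~\ref{etaboundthm} with $p=2$ gives $\E|\eta^\e(s)-\eta^\e(k\Delta)|^2\leq C_\eta(2)\Delta$, and Jensen's inequality applied to the probability measure $|\mu|/M$ yields
\ba
\E|Z^\e(s)-\widetilde Z^\e(s)|^2 \leq M\!\int_0^1 \E|X^\e(s,z)-\widetilde X^\e(s,z)|^2\,|\mu|(dz)\leq M^2\, u^\e(s),
\ea
where $u^\e(s):=\sup_{x\in[0,1]}\E|X^\e(s,x)-\widetilde X^\e(s,x)|^2$. Taking the supremum in $x$ in the preceding bound, using $t-k\Delta\leq\Delta$ and $u^\e(k\Delta)=0$, I arrive at the integral inequality
\ba
u^\e(t) \leq C\Big(\frac{\Delta^3}{\e^2}+\frac{\Delta^2}{\e}\Big)
+C\Big(\frac{\Delta}{\e^2}+\frac{1}{\e}\Big)\int_{k\Delta}^t u^\e(s)\,ds
\ea
for $t\in[k\Delta,(k+1)\Delta)$.

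Finally, Gronwall on $[k\Delta,(k+1)\Delta]$ yields
\ba
u^\e(t)\leq C\Delta\Big(\frac{\Delta^2}{\e^2}+\frac{\Delta}{\e}\Big)
\exp\!\Big(C\Big(\frac{\Delta^2}{\e^2}+\frac{\Delta}{\e}\Big)\Big),
\ea
and the third line of \eqref{Deltabehavior} with $\gamma=1/2$ bounds this by $C_X\sqrt\e$. Since the estimate is independent of $k$, taking $\sup_{t\geq 0}$ and $\sup_x$ gives \eqref{diffXXtilde}. The main obstacle is the tension between the singular prefactors $\e^{-2}$, $\e^{-1}$ and the shrinking $\Delta$: one must arrange that the Gronwall exponent $\Delta(\Delta/\e^2+1/\e)$ stays controlled while the polynomial prefactor $\Delta(\Delta^2/\e^2+\Delta/\e)$ is $o(1)$. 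This is precisely what the delicate choice of $\Delta(\e)$ in \eqref{Deltabehavior} is designed to achieve, and keeping track of which terms enter the exponent versus the prefactor is the only real bookkeeping subtlety.
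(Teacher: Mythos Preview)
Your proposal is correct and follows essentially the same approach as the paper's own proof: both subtract the two integral equations on $[k\Delta,(k+1)\Delta)$, apply Cauchy--Schwarz/It\^o isometry and the Lipschitz bound from $\mathbf A_1$, use Lemma~\ref{etaboundthm} for the $\eta$-increment and the $M^2$-bound for the $Z$-increment, and close via Gronwall together with \eqref{Deltabehavior} at $\gamma=1/2$. The only cosmetic difference is that you write $(t-k\Delta)/\e^2$ before bounding it by $\Delta/\e^2$, whereas the paper uses $\Delta/\e^2$ directly.
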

\begin{proof}
Let $k\geq 0$ and $t\in [k\Delta,(k+1)\Delta)$. For each $x\in[0,1]$ we write 
\ba
\label{XXtildemc}
&\E_{X_0,\eta_0} | X^\e(t,x) -  \widetilde X^\e(t,x)|^2\\
& \leq \frac {2} { \e^2 }  \E_{X_0,\eta_0}  \Big|
\int_{k \Delta}^t\Big( F(x,\eta^\e(s),X^\e(s,x),Z^\e (s))- F(x,  \eta^\e(k \Delta),\widetilde X^\e(s,x),\widetilde Z^\e(s))\Big)\,\di s\Big|^2\\
&+\frac {2 }{ \e } \E_{X_0,\eta_0} 
\Big| \sum_{j=1}^\infty \int^t _{k \Delta} 
\Big(\Sigma^j(x,\eta^\e(s),X^\e(s,x),Z^\e (s))
- \Sigma^j(x, \eta^\e(k \Delta),\widetilde X^\e(s,x),\widetilde{Z}^\e(s))\Big)\, \di B^j(s) \Big|^2\\
&\leq 
\frac {2\Delta} { \e^2 }  \E_{X_0,\eta_0} \int_{k \Delta}^t 
 \Big|F(x,\eta^\e(s),X^\e(s,x),Z^\e (s))- F(x,  \eta^\e(k \Delta),\widetilde X^\e(s,x),\widetilde Z^\e(s))\Big|^2\,\di s\\
&+\frac {2}{ \e } \E_{X_0,\eta_0} 
\int^t _{k \Delta} 
\Big\|\Sigma(x,\eta^\e(s),X^\e(s,x),Z^\e (s))- \Sigma(x, \eta^\e(k \Delta),\widetilde X^\e(s,x),\widetilde{Z}^\e(s))\Big\|^2_{l^2}\, \di s\\
&\leq C_1\Big(\frac {\Delta} { \e^2 }+\frac{1}{\e}\Big)
\E_{X_0,\eta_0} \int_{k \Delta}^t \Big( |\eta^\e(s)- \eta^\e(k \Delta)|^2
+|X^\e(s,x)-\widetilde X^\e(s,x)|^2+|Z^\e (s)-\widetilde{Z}^\e(s)|^2\Big)\,\di s,
\ea
where $C_1>0$ depends on the Lipschitz constants of $F$ and $\|\Sigma\|_{l^2}$.
The first summand of the last integrand is estimated with the help of Lemma \ref{etaboundthm} with $p=2$ that gives us
\ba
\label{XXtildemceq1}
\sup_{t\in[k\Delta,(k+1)\Delta]}\E_{X_0,\eta_0}|\eta^\e(t)- \eta^\e(k \Delta)|^2 
\leq C_\eta(2) \Delta. 
\ea
Further, we estimate
\ba
\label{XXtildemceq2}
\E_{X_0,\eta_0} \int^{ t}_{k \Delta}|Z^\e (s)-\widetilde Z^\e(s)|^2\,\di s
&=\E_{X_0,\eta_0} \int^{t}_{k \Delta}\Big| \int^1_0  (X^\e (s,y)-    \widetilde  X^\e  (s,y))\,\mu (\di y) \Big|^2\,\di s\\
&\leq M\E_{X_0,\eta_0} \int^{t}_{k \Delta}\int^1_0 |X^\e (s,y)-    \widetilde  X^\e  (s,y)|^2\,|\mu| (\di y)\,\di s\\
&= M\int^{t}_{k \Delta}\int^1_0 \E_{X_0,\eta_0}  |X^\e (s,y)-    \widetilde  X^\e  (s,y)|^2\,|\mu| (\di y)\,\di s\\
&\leq M^2 \int_{k \Delta}^t \sup_ {x\in [0,1]} \E_{X_0,\eta_0}|  X^\e (s,x)- \widetilde  X^\e  (s,x) |^2\,\di s.
\ea
Combining  \eqref{XXtildemc}, \eqref{XXtildemceq1} and \eqref{XXtildemceq2}
we can find  a constant $C_2>0$ not depending on $\e$ and $\Delta$, such that
\ba
\sup_ {x\in [0,1]}&\E_{X_0,\eta_0} | X^\e(t,x) -  \widetilde X^\e(t,x)|^2\\
 &\leq  C_2 \Delta^2\Big(\frac {\Delta} {\e^2}+\frac{1}{\e}\Big)
 +  C_2\Big(\frac {\Delta}{\e^2}+ \frac {1}{\e}\Big) 
 \int_{k \Delta}^t \sup_ {x\in [0,1]}\E_{X_0,\eta_0} |X^\e(s,x)-\widetilde X^\e(s,x)|^2\,\di s.
\ea
With the help of Gronwall's lemma and \eqref{Deltabehavior} with $\gamma=1/2$, it follows that for $t\in[k\Delta,(k+1)\Delta]$ for some $C_3>0$ we have
\ba
\sup_ {x\in [0,1]}\E_{X_0,\eta_0}| X^\e(t,x) -  \widetilde X^\e(t,x)|^2
\leq C_2\Delta \Big(\frac{\Delta^2 }{ \e^2 }+ \frac { \Delta } { \e }\Big) 
\ex^{C_2(\frac{\Delta^2}{\e^2}+ \frac{\Delta}{\e})}\leq C_3 \sqrt\e.
\ea
Since all the constants do not depend on $k$, this estimate holds uniformly w.r.t.\ $t\geq 0$. 
\end{proof}

\begin{lem}\label{lemdiffetaetatilde}
For any $T>0$
\begin{align}
\label{diffetaetatilde}
&\E_{X_0,\eta_0}\sup_{t\in [0,T]}| \eta^\e(t)- \widetilde \eta^\e(t )|^2\to 0,\quad \e\to 0,\\
\label{e:bareta}
&\E_{X_0,\eta_0}\sup_{t\in [0,T]}| \eta^\e(t)- \bar \eta^\e(t )|^2\to 0,\quad \e\to 0. 
\end{align}
\end{lem}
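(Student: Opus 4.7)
The strategy is to estimate $\eta^\e(t)-\widetilde\eta^\e(t)$ directly by writing it as the difference of Lebesgue and stochastic integrals, reducing via the Lipschitz property of $f$ and $\sigma$ (Assumption $\mathbf{A}_1$) together with the Cauchy--Schwarz and Burkholder--Davis--Gundy inequalities to the control of three pointwise-in-$s$ expectations: $\E|\eta^\e(s)-\eta^\e([s]_\Delta)|^2$, $\E|X^\e(s,\eta^\e(s))-\widetilde X^\e(s,\eta^\e([s]_\Delta))|^2$, and $\E|Z^\e(s)-\widetilde Z^\e(s)|^2$. The first is $O(\Delta)$ by Lemma~\ref{etaboundthm}; the third is $O(\sqrt\e)$ by the Jensen/Cauchy--Schwarz estimate for the signed measure $\mu$ together with \eqref{diffXXtilde}, exactly as in \eqref{XXtildemceq2}.

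\textbf{Handling the composition term.} The subtle point is the middle quantity $\E|X^\e(s,\eta^\e(s))-\widetilde X^\e(s,\eta^\e([s]_\Delta))|^2$, which I would split as
\begin{equation*}
\bigl|X^\e(s,\eta^\e(s))-X^\e(s,\eta^\e([s]_\Delta))\bigr| + \bigl|X^\e(s,\eta^\e([s]_\Delta))-\widetilde X^\e(s,\eta^\e([s]_\Delta))\bigr|.
\end{equation*}
For the first piece, the mean value inequality gives a bound by $\|\nabla X^\e(s,\cdot)\|_\infty\cdot|\eta^\e(s)-\eta^\e([s]_\Delta)|$, which I would control by combining Sobolev embedding $\|\nabla X^\e(s,\cdot)\|_\infty\leq C_{\mathrm{Sob}}(1,p)\|\nabla X^\e(s,\cdot)\|_{W^{1,p}}$ for some $p\in(2,p^*)$, the uniform moment bound from Theorem~\ref{wnormmomentsboundthm}, and Hölder's inequality with conjugate exponents $p/2$ and $p/(p-2)$, so that Lemma~\ref{etaboundthm} applied with exponent $2p/(p-2)$ yields $O(\Delta)$. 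For the second piece, since $\eta^\e([s]_\Delta)=\eta^\e(k\Delta)$ is $\rF_{k\Delta}$-measurable, I would condition on $\rF_{k\Delta}$: invoking the Markov property, the conditional law of $(X^\e,\widetilde X^\e,\eta^\e)$ on $[k\Delta,(k+1)\Delta]$ is that of a system restarted from $(X^\e(k\Delta,\cdot),\eta^\e(k\Delta))$, and the constant in \eqref{diffXXtilde} is independent of initial data, so $\sup_{x\in[0,1]}\E[|X^\e(s,x)-\widetilde X^\e(s,x)|^2\mid\rF_{k\Delta}]\leq C\sqrt\e$ a.s.\ uniformly on this interval. Evaluating at the $\rF_{k\Delta}$-measurable point $x=\eta^\e(k\Delta)$ and taking unconditional expectation gives $O(\sqrt\e)$.

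\textbf{Assembly.} Collecting these bounds yields
\begin{equation*}
\sup_{0\leq s\leq T}\E_{X_0,\eta_0}\bigl|X^\e(s,\eta^\e(s))-\widetilde X^\e(s,\eta^\e([s]_\Delta))\bigr|^2 \leq C(\sqrt\e+\Delta),
\end{equation*}
so after the Cauchy--Schwarz/BDG reduction from Step~1,
\begin{equation*}
\E_{X_0,\eta_0}\sup_{t\in[0,T]}|\eta^\e(t)-\widetilde\eta^\e(t)|^2 \leq C_T\int_0^T (\sqrt\e + \Delta)\,\di s \longrightarrow 0,\quad \e\to 0,
\end{equation*}
by \eqref{Deltabehavior}. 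For the statement \eqref{e:bareta} about $\bar\eta^\e$, the crucial simplification is that $\sigma=\sigma(\eta)$ appears in both $\eta^\e$ and $\bar\eta^\e$ evaluated at the \emph{same} argument $\eta^\e(s)$, so the stochastic integrals cancel exactly; only the drift comparison remains, which is controlled by the identical three-part estimate above.

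\textbf{Main obstacle.} The technical heart is the second bracket in the composition splitting: the random evaluation point $\eta^\e([s]_\Delta)$ is not independent of $X^\e-\widetilde X^\e$, so the pointwise bound \eqref{diffXXtilde} cannot be invoked directly. Resolving this requires either the conditioning/Markov argument described above (relying on the uniformity of the constant in \eqref{diffXXtilde} with respect to initial data) or, alternatively, upgrading \eqref{diffXXtilde} to a bound on $\E\|X^\e(t,\cdot)-\widetilde X^\e(t,\cdot)\|_\infty^2$ via Sobolev embedding, which would in turn demand an analogous estimate on the $L^2$-norm of the gradient difference. The conditioning route is preferable as it reuses \eqref{diffXXtilde} as a black box.
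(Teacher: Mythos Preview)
Your proposal is correct and follows essentially the same route as the paper: the Lipschitz/BDG reduction to the three integrand terms, the estimate of the $\eta$-increment via Lemma~\ref{etaboundthm}, the $Z$-difference via \eqref{XXtildemceq2} and \eqref{diffXXtilde}, and the two-step splitting of the composition term with the mean-value/Sobolev/H\"older argument for the first bracket and the Markov conditioning argument for the second are exactly what the paper does. Your remark that for $\bar\eta^\e$ the stochastic integrals cancel outright (since $\sigma=\sigma(\eta^\e(s))$ in both) is a correct sharpening of the paper's laconic ``obtained analogously''.
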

\begin{proof}
We prove the limit \eqref{diffetaetatilde}. The limit \eqref{e:bareta} is obtained analogously.

Using the Lipschitz confinuity of $f$ and $\sigma$ and the Doob inequality we get
\ba
\label{etaetatildemc}
&\E_{X_0,\eta_0}\sup_ {t\in [0,T]} |\eta^\e(t)- \widetilde \eta^\e(t)|^2\\
&\leq 2\E_{X_0,\eta_0}\sup_ {t\in [0,T]} \Big| \int^t_0 f(\eta^\e(s),X^\e(s,\eta^\e(s)),Z^\e (s)  )
-f(\eta^\e([s]_\Delta), \widetilde  X^\e(s,\eta^\e([ s]_\Delta)),\widetilde{Z}^\e(s))\,\di s\Big|^2\\
&\quad+ 2\E_{X_0,\eta_0}\sup_ {t\in [0,T]} \Big| \int^t_0 \sigma(\eta^\e(s),X^\e(s,\eta^\e(s)),Z^\e (s)  )
-\sigma(\eta^\e([s]_\Delta), \widetilde  X^\e(s,\eta^\e([ s]_\Delta)),\widetilde{Z}^\e(s))\,\di W(s)\Big|^2\\
&\leq C_1 T  \E_{X_0,\eta_0} 
\int^T_0 \Big(|\eta^\e(s)-\eta^\e([s]_\Delta)|^2
+|X^\e(s,\eta^\e(s))-\widetilde  X^\e(s,\eta^\e([ s]_\Delta)) |^2
+|Z^\e (s)-\widetilde{Z}^\e(s)|^2\Big)\, \di s   \\
&\leq C_2 T \Big(\Big\lfloor \frac{T} \Delta\Big\rfloor+1\Big)\times\\
&\quad \times
\max_{0\leq k\leq \lfloor \frac{T} \Delta\rfloor}\E_{X_0,\eta_0}
\int^{(k+1)\Delta}_{k \Delta} \Big( |\eta^\e(s)-\eta^\e(k\Delta)|^2
+|X^\e(s,\eta^\e(s))-   X^\e(s ,\eta^\e(k\Delta))|^2\\
&\hspace{4.5cm}+|X^\e (s ,\eta^\e (k\Delta ) )-\widetilde  X^\e (s ,\eta^\e (k\Delta ) )|^2 +|Z^\e (s)-\widetilde{Z}^\e(s)|^2\Big)\, \di s  .
\ea
The first summand in the latter integrand is estimated by means of 
\eqref{XXtildemceq1}:
\ba
\E_{X_0,\eta_0}
\int^{(k+1)\Delta}_{k \Delta} |\eta^\e(s)-\eta^\e(k\Delta)|^2\, \di s   \leq C_\eta(2)\Delta^2.
\ea
The third summand is estimated by \eqref{diffXXtilde}: 
\ba
\int^{(k+1)\Delta}_{k \Delta}\E_{X_0,\eta_0}
&|X^\e (s ,\eta^\e (k\Delta ) )-\widetilde  X^\e (s ,\eta^\e (k\Delta ) )|^2\, \di s   \\ 
&\leq \Delta \sup_{s\in[k\Delta,(k+1)\Delta]}\E_{X_0,\eta_0}|X^\e (s ,\eta^\e (k\Delta ) )-\widetilde  X^\e (s ,\eta^\e (k\Delta ) )|^2\\ 
&\leq \Delta\sup_{s\in [0,\Delta ]}\E_{X_0,\eta_0}\Big[\sup_{x\in[0,1]}
\E_{\eta^\e(k\Delta), X^\e (k\Delta,\cdot)}|X^\e (s,x)-\widetilde  X^\e (s,x)|^2\Big]\\
&\leq C_X \Delta\sqrt\e .
\ea
The fourth summand is estimated with the help of \eqref{XXtildemceq2} and \eqref{diffXXtilde}:
\ba
\int^{(k+1)\Delta}_{k \Delta}\E_{X_0,\eta_0}
|Z^\e (s)-\widetilde Z^\e (s)|^2\, \di s  
&\leq M^2 \int_{k \Delta}^{(k+1)\Delta} \sup_ {x\in [0,1]} \E_{X_0,\eta_0}|  X^\e (s,x)- \widetilde  X^\e  (s,x) |^2\,\di s\\
&\leq M^2C_X  \Delta\sqrt\e .
\ea

It remains to estimate the second summand, where we use the Sobolev embedding theorem, H\"older's inequality, Theorem \ref{wnormmomentsboundthm} and Theorem \ref{etaboundthm}. We choose $2<p<p^*$ such that conditions of Theorem \ref{wnormmomentsboundthm} are fulfilled. We get 
\ba
\E_{X_0,\eta_0} &\int^{(k+1)\Delta}_{k \Delta} |X^\e(s,\eta^\e(s))-   X^\e(s ,\eta^\e(k\Delta))|^2\, \di s  \\
&\leq \Delta \sup_ {s\in [k\Delta,(k+1)\Delta]}\E_{X_0,\eta_0} |X^\e(s,\eta^\e(s))-   X^\e (s ,\eta^\e (k\Delta ))|^2 \\
&=\Delta \sup_ {s\in [k\Delta,(k+1)\Delta]}\E_{X_0,\eta_0} |\nabla X^\e(s,\theta^{\e,k}(s))|^2\cdot |\eta^\e(s)-\eta^\e(k\Delta)|^2 \\
&\leq C^2_\text{Sob}(1,p)\Delta \sup_ {s\in [k\Delta,(k+1)\Delta]} 
\E_{X_0,\eta_0} \| \nabla X^\e(s,\cdot)\|^2_{W^{1,p}} \cdot |\eta^\e(s)-\eta^\e(k\Delta)|^2 \\ 
&\leq C_\text{Sob}^2(1,p) \Delta \sup_ {s\in [k\Delta,(k+1)\Delta]} 
\Big(\E_{X_0,\eta_0}\| \nabla X^\e(s,\cdot)\|_{W^{1,p}}^{ p}\Big)^{2/p}
\Big(\E_{X_0,\eta_0}|\eta^\e(s)-\eta^\e (k\Delta)|^{ \frac{2p}{p-2}}\Big)^{\frac{p-2}{p}}\\
&\leq C_3\Delta^2   \Big(1+\|\nabla X_0(\cdot)\|^{p}_{W^{1,p}} + \| \nabla X_0(\cdot)\|^{2p^*}_{L^{2p^*}} \Big) ^ {2/p} ,
\ea
where $\theta^{\e,k}(s)\in (0,1)$

Altogether we find a constant $C>0$ such that
\ban
 \E_{X_0,\eta_0}\sup_ {t\in [0,T]} | \eta^\e(t )- \widetilde \eta^\e(t )|^2
 &\leq C\Big(\sqrt\e + \Delta(\e)  \Big(1+\|\nabla X_0(\cdot)\|^{p}_{W^{1,p}} + \| \nabla X_0(\cdot)\|^{2p^*}_{L^{2p^*}} \Big)   \Big) .
\ean
\end{proof}

\subsection{Weak convergence. Proof of Theorem \ref {averagingwc}}

The proof of the weak convergence of $\eta^\e$ to $\widehat\eta$ consists, as usual, in the proof of
the weak relative compactness of the family $\{\eta^\e\}_{\e\in(0,1]}$ and the proof of the convergence of finite dimensional
distributions.

1. Proof of convergence of final dimensional distributions. 
Let 
\ba
 \widehat L \varphi( x )= \frac{\widehat\sigma^2(x)}{2} \varphi'' (x) +  \widehat f(x) \varphi'(x),\quad \varphi\in C^3 ([0,1],\bR),
\ea
be the generator of the limit diffusion $\widehat \eta$. By Theorem 8.10 in Chapter 4 in Ethier and Kurtz \cite{EthierK-86},
we show that for any $m\geq 1$,
any bounded measurable function $\Phi=\Phi(x_1,\dots,x_m)$, any $\varphi\in C^2_b([0,1],\bR)$ 
and any $0 < t_1 < \dots< t_m< s \leq  s+t \leq  T$ it holds
\ba
\E_{X_0,\eta_0} \Phi(\eta^\e(t_1),\ldots,\eta^\e(t_m))\Big(\varphi(  \eta^\e(t))-\varphi(\eta^\e(s))
-\int^t_s \widehat L \varphi(  \eta^\e(u))\,\di u\Big)\to 0,\quad \e\to 0,
\ea
Recall the processes $\widetilde X^\e$, $\widetilde Z^\e$, and $\widetilde \eta^\e$ defined in \eqref{diffeqXtildeetatilde} and write
\ba
\label{e:3terms}
\E_{X_0,\eta_0}  &\Phi(\eta^\e(t_1),\ldots,\eta^\e(t_m))\Big(\varphi(  \eta^\e(s+t))-\varphi(\eta^\e(s))-\int^{s+t}_s \widehat L \varphi(  \eta^\e(u))\,\di u\Big)\\
&=\E_{X_0,\eta_0}  \Phi (\eta^\e(t_1),\ldots,\eta^\e(t_m) )\Big(\varphi(  \eta^\e(s+t))-\varphi(\widetilde\eta^\e(s+t))\Big)\\
&-\E_{X_0,\eta_0}  \Phi (\eta^\e(t_1),\ldots,\eta^\e(t_m) )\Big(\varphi(\eta^\e(s))-\varphi(\widetilde\eta^\e(s))\Big)\\
&+\E_{X_0,\eta_0}  \Phi (\eta^\e(t_1),\ldots,\eta^\e(t_m) )\Big(\varphi( \widetilde \eta^\e(s+t))-\varphi(\widetilde\eta^\e(s))-  \int^{s+t}_s \widehat L \varphi(  \eta^\e(u))\,\di u\Big)\\
&=:I_1^\e+I_2^\e+I_3^\e.
\ea
For $0\leq s+t\leq T$ using the boundedness of $\Phi$ and the Lipschitz property of $\varphi$ we estimate:
\ba
&\Big|\E_{X_0,\eta_0}  \Phi(\eta^\e(t_1),\ldots,\eta^\e(t_m) ) \Big(\varphi(  \eta^\e(s+t))-\varphi(\widetilde\eta^\e(s+t))\Big)\Big|^2
&\leq C_1 \E_{X_0,\eta_0} \Big| \eta^\e(s+t)-\widetilde\eta^\e(s+t)\Big|^2.
\ea
Hence, the summands $I_1^\e$ and $I_2^\e$ converge to zero as $\e\to 0$ due to Lemma 
\ref {lemdiffetaetatilde}.

%

Therefore we have to estimate   $I_3^\e$.
By the It\^o formula, we get
\ba
\label{e:34}
\varphi (\widetilde\eta^\e(s+t))- \varphi (\widetilde\eta^\e(s))
=& \int_s^{s+t} \varphi '(\widetilde  \eta^\e(u)) f\Big( \eta^\e ([u]_\Delta,\widetilde  X^\e(u ,\eta^\e([u]_\Delta)), \widetilde  Z^\e (u)\Big)\,\di u \\
&+\int_s^{s+t} \varphi '(\widetilde  \eta^\e(u)) 
\sigma\Big( \eta^\e ([u]_\Delta,\widetilde  X^\e(u ,\eta^\e([u]_\Delta)), \widetilde  Z^\e (u)\Big)\,\di W(u)\\
&+ \frac12  \int_s^{s+t}
\varphi ''(\widetilde  \eta^\e(u)) \sigma\Big( \eta^\e ([u]_\Delta,\widetilde  X^\e(u ,\eta^\e([u]_\Delta)), \widetilde  Z^\e (u)\Big)^2\,\di u.
\ea
Let $\Delta>0$ be small enough such that $0<s-t_m<\Delta$.
Taking conditional expectation we see that terms containing stochastic integrals vanish. Let us consider the terms containing the functions $f$ and
$\widehat f$. We have
\ba
\E_{X_0,\eta_0} & \Phi (\eta^\e(t_1),\ldots,\eta^\e(t_m) )
\int_s^{s+t}
\Big[ \varphi '(\widetilde  \eta^\e(u)) f\big( \eta^\e ([u]_\Delta,\widetilde  X^\e(u ,\eta^\e([u]_\Delta)), \widetilde  Z^\e (u)\big) -       \varphi'(\eta^\e(u))\widehat f(\eta^\e(u))\Big]\,\di u \\
&=\E_{X_0,\eta_0} \Phi (\eta^\e(t_1),\ldots,\eta^\e(t_m) )\Big(
\sum_{k=[\frac{s}{\Delta}]+1 }^{[\frac{s+t}{\Delta}]} \int_{k\Delta}^{(k+1)\Delta} 
+\int_{s}^{[s]_\Delta+\Delta}
+\int_{[s+t]_\Delta}^{s+t}\Big)
\Big[ \cdots\Big]\,\di u \\
&=S_1+S_2+S_3.
\ea
It is clear that for some $C_2>0$,
\ba
|S_2|+|S_3|\leq C_2\Delta.
\ea
For each $[\frac{s}{\Delta}]+1 \leq k\leq [\frac{s+t}{\Delta}]$ we write
\ba
\E_{X_0,\eta_0} &   \Phi (\cdots )
\int_{k\Delta}^{(k+1)\Delta} 
\Big( \varphi '(\widetilde  \eta^\e(u)) f\big( \eta^\e ([u]_\Delta,\widetilde  X^\e(u ,\eta^\e([u]_\Delta)), \widetilde  Z^\e (u)\big) -       \varphi'(\eta^\e(u))\widehat f(\eta^\e(u))\Big)
\,\di u\\
&=\E_{X_0,\eta_0} \Phi (\cdots )
\int_{k\Delta}^{(k+1)\Delta} 
\Big(\varphi '(\widetilde  \eta^\e(u))- \varphi'(\eta^\e(k\Delta))\Big)   f\big( \eta^\e (k\Delta),\widetilde  X^\e(u ,\eta^\e(k \Delta)), \widetilde  Z^\e (u)\big) \,\di u\\
&+\E_{X_0,\eta_0}  \Phi (\cdots ) 
\int_{k\Delta}^{(k+1)\Delta} \Big( \varphi'(\eta^\e(k\Delta)) \widehat f( \eta^\e (k\Delta)) -  \varphi'(\eta^\e(u)\widehat f(\eta^\e(u))\Big)\,\di u\\
&+\E_{X_0,\eta_0} \Phi (\cdots )
\int_{k\Delta}^{(k+1)\Delta} \Big( 
\varphi'(\eta^\e(k\Delta))f\big( \eta^\e (k\Delta),\widetilde  X^\e(u ,\eta^\e(k \Delta)), \widetilde  Z^\e (u)\big)
- \varphi'(\eta^\e(k\Delta)\widehat f(\eta^\e(k\Delta)) \Big)
\,\di u\\
&=S_{11,k}+S_{12,k}+S_{13,k}
\ea
With the help of Lemma~\ref{lemdiffetaetatilde} we estimate
\ba
|S_{11,k}|=\Big|\E_{X_0,\eta_0} & \Phi (\eta^\e(t_1),\ldots,\eta^\e(t_m) )
\int_{k\Delta}^{(k+1)\Delta} 
\Big(\varphi '(\widetilde  \eta^\e(u))- \varphi'(\eta^\e(u))\Big)   f\big( \eta^\e (k\Delta,\widetilde  X^\e(u ,\eta^\e(k \Delta)), \widetilde  Z^\e (u)\big) \,\di u \Big|\\
&\leq \Delta\cdot \|\Phi\|_\infty\cdot \|f\|_\infty \E_{X_0,\eta_0}
\sup_{u\in[s,T]}\Big|\varphi '(\widetilde  \eta^\e(u))- \varphi'(\eta^\e(u))\Big|  \\
&\leq 
\Delta\cdot \|\Phi\|_\infty\cdot \|f\|_\infty \cdot \|\varphi''\|_\infty \cdot \E_{X_0,\eta_0}
\sup_{u\in[0,T]}| \widetilde  \eta^\e(u) - \eta^\e(u)|=o(\Delta).
\ea
With the help of Lemma~\ref{etaboundthm} we obtain
\ba
|S_{12,k}|&\leq \Delta\cdot \|\Phi\|_\infty\cdot  \|(\varphi'f)'\|_\infty\cdot \E_{X_0,\eta_0}
\E_{\eta^\e(k\Delta), X^\e(k\Delta,\cdot)} |\eta^\e(u) - \eta^\e(k\Delta)|
=o(\Delta).
\ea
Finally, with the help of \eqref{e:in_law}, Assumption $\mathbf A_\text{w}$ and Theorem~\ref{wnormmomentsboundthm} we get
\ba
|S_{13,k}|&\leq \Delta\cdot \|\Phi\|_\infty\cdot \E_{X_0,\eta_0}
\E_{X^\e(k\Delta,\cdot),\eta^\e(k\Delta)}\Big| \frac{\e}{\Delta}\int_0^{\Delta/\e} 
\Big( \varphi'(\eta)f\big( \eta,\xi^{\eta}(u,\eta), \zeta^{\eta}(u)\big)\,\di u
- \varphi'(\eta)\widehat f(\eta\Big) \Big|_{\eta=\eta^\e (k\Delta)}\\
&\leq C_3\Delta \cdot \gamma\Big(\frac{\Delta}{\e}\Big) \cdot \E_{X_0,\eta_0}  \Big(1+\|X(k\Delta,\cdot)\|^2_{W^{1,2}}\Big)\\
&\leq C_4\Delta \cdot \gamma\Big(\frac{\Delta}{\e}\Big) \cdot  \Big(1+\|X_0(\cdot)\|^2_{W^{1,2}}\Big).
\ea
The same estimates hold true for the terms in \eqref{e:34} containing $\varphi''\sigma^2$.
    
Eventually, combining all the above estimates we get that
\ba
\Big|\E_{X_0,\eta_0} &\Phi(\eta^\e(t_1),\ldots,\eta^\e(t_m))\Big(\varphi(  \eta^\e(s+t))-\varphi(\eta^\e(s))-\int^{s+t}_s \widehat L \varphi(  \eta^\e(u))\,\di u\Big)\Big|\to 0,\quad \e\to 0,
\ea
which establishes the convergence of finite dimensional distributions of $\eta^\e$.

\noindent
2. In order to show tightness of the family $\{\eta^\e(\cdot)\}$ we prove that for any $T>0$ there is $C>0$ such that
\ba
\sup_{0\leq s<t\leq T}  \E_{X_0,\eta_0}|\eta^\e(t)- \eta^\e(s)|^4 \leq C|t-s|^2,
\ea
see Proposition 10.3 in Chapter 3 in Ethier and Kurtz \cite{EthierK-86}.
This follows from the following straightforward calculation that uses the moment inequality for stochastic integrals, see, e.g., 
Mao~\cite[Theorem 7.1]{Mao2007}:
\ba
\E_{X_0,\eta_0}&|\eta^\e(t)- \eta^\e(s)|^4\\
&= \E_{X_0,\eta_0}
\Big|\int^t_s  f(\eta^\e(u), X^\e(u,\eta^\e(u)),Z^\e (u))\,\di u +\int^t_s  \sigma(\eta^\e(u), X^\e(u,\eta^\e(u)),Z^\e (u))\,\di W(s)\Big|^4\\
&\leq
8 \E_{X_0,\eta_0}
\Big|\int^t_s  f(\eta^\e(u), X^\e(u,\eta^\e(u)),Z^\e (u))\,\di u\Big|^4\\
&\qquad +8 \E_{X_0,\eta_0}
\Big| \int^t_s  \sigma(\eta^\e(u), X^\e(u,\eta^\e(u)),Z^\e (u))\,\di W(u)\Big|^4\\
& \leq 8  \E_{X_0,\eta_0}|t-s|^3 \cdot \int^t_s | f(\eta^\e(s), X^\e(s,\eta^\e(s)),Z^\e (s))|^4\,\di s \\
&\qquad  +18 |t-s| \cdot \E_{X_0,\eta_0}\int^t_s  \sigma(\eta^\e(s), X^\e(s,\eta^\e(s)),Z^\e (s))^4\,\di s\\
&\leq 8\|f\|^4_\infty \cdot  |t-s|^4 +18\|\sigma\|^4_\infty  \cdot |t-s|^2 \\
&\leq C|t-s|^2. 
\ea

\subsection{Proof of Theorem \ref{averagingsc}}

In this Section we prove the convergence $\eta^\e \to\widehat \eta$ in the u.c.p.\ sense under the assumption that $\sigma(\eta,X,Z)=\sigma(\eta)$.
We continue using the auxiliary processes $\widetilde X^\e$ and $\widetilde Z^\e$ defined in 
\eqref{diffeqXtildeetatilde} and consider the  auxiliary processes $\bar\eta^\e$ defined as in \eqref{e:bar-eta}.
Note that the process $\bar\eta^\e$ differs from $\widetilde\eta^\e$ in the stochastic integral term.

\begin{lem} 
\label{l:ff}
For any $T>0$, there are $C_1$, $C_2>0$ such that
\ba
\label{e:ff}
\E_{X_0,\eta_0} \sup_ {t\in [0,T]} \Big| \int_0^t f( \eta^\e([s]_\Delta) ,
\widetilde X^\e(s ,\eta^\e([s]_\Delta)),\widetilde{Z}^\e(s))
-\widehat f ( \eta^\e(s))\,\di s\Big|^2 \leq C_1\sqrt \Delta + C_2\gamma\Big(\frac{\Delta}{\e}\Big)\Big(1+\|X_0\|_{W^{1,2}}\Big).
\ea
\end{lem}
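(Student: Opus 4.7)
The plan is to partition $[0,T]$ into blocks of length $\Delta$, freeze the slow variable $\eta^\e$ at the left endpoint of each block, rescale time by $\e$ to obtain a frozen process of the form \eqref{diffeqfrozeneta}, and then invoke Assumption $\mathbf{A}_\text{s}$. Since $f$ is bounded by $\mathbf{A}_1$ and $\mathbf{A}_4$ and $\widehat f$ is Lipschitz on $[0,1]$, the integrand
\[
g^\e(s):=f(\eta^\e([s]_\Delta),\widetilde X^\e(s,\eta^\e([s]_\Delta)),\widetilde Z^\e(s)) - \widehat f(\eta^\e(s))
\]
is uniformly bounded by some $K$, so the random variable $M:=\sup_{t\in[0,T]}\bigl|\int_0^t g^\e(s)\,\di s\bigr|$ satisfies $M\leq KT$ and hence $M^2\leq KT\cdot M$. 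It therefore suffices to estimate $\E_{X_0,\eta_0}M$, and setting $N_T:=\lfloor T/\Delta\rfloor$ and $I_k:=\int_{k\Delta}^{(k+1)\Delta\wedge T} g^\e(s)\,\di s$ the triangle inequality further yields $M\leq \sum_{k=0}^{N_T}|I_k|+K\Delta$, reducing the task to a uniform estimate of $\E_{X_0,\eta_0}|I_k|$.

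For each $k$ I would decompose $I_k=I_k'+I_k''$ about the frozen averaged drift $\Delta\widehat f(\eta^\e(k\Delta))$, with
\[
I_k'':=\Delta\widehat f(\eta^\e(k\Delta)) - \int_{k\Delta}^{(k+1)\Delta\wedge T}\widehat f(\eta^\e(s))\,\di s.
\]
The Lipschitz continuity of $\widehat f$ combined with Lemma \ref{etaboundthm} (with $p=2$) and Jensen's inequality gives $\E_{X_0,\eta_0}|\eta^\e(s)-\eta^\e(k\Delta)|\leq \sqrt{C_\eta(2)\Delta}$ for $s\in[k\Delta,(k+1)\Delta]$, so that $\E_{X_0,\eta_0}|I_k''|\leq C\Delta^{3/2}$. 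For the leading term $I_k'$ I would use the Markov property at time $k\Delta$ together with the law identity \eqref{e:in_law}: conditionally on $\rF_{k\Delta}^\e$, the pair $(\widetilde X^\e(k\Delta+\tau,\cdot),\widetilde Z^\e(k\Delta+\tau))$ has the same law as the frozen pair $(\xi^\eta(\tau/\e,\cdot),\zeta^\eta(\tau/\e))$ with parameter $\eta=\eta^\e(k\Delta)$ and initial datum $\xi^\eta(0,\cdot)=X^\e(k\Delta,\cdot)$. The time-change $u=(s-k\Delta)/\e$ then rewrites
\[
I_k' = \e\int_0^{\Delta/\e} f(\eta^\e(k\Delta),\xi^\eta(u,\eta^\e(k\Delta)),\zeta^\eta(u))\,\di u - \Delta\widehat f(\eta^\e(k\Delta)),
\]
so Assumption $\mathbf{A}_\text{s}$ with $T'=\Delta/\e$, applied conditionally on $\rF_{k\Delta}^\e$, together with Theorem \ref{wnormmomentsboundthm} (and the elementary inequality $\sqrt{1+x^2}\leq 1+x$ for $x\geq 0$), yields
\[
\E_{X_0,\eta_0}|I_k'|\leq \Delta\,\gamma(\Delta/\e)\bigl(1+\E_{X_0,\eta_0}\|X^\e(k\Delta,\cdot)\|_{W^{1,2}}\bigr)\leq C\,\Delta\,\gamma(\Delta/\e)\bigl(1+\|X_0\|_{W^{1,2}}\bigr)
\]
uniformly in $k$.

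Summing these two bounds over the $N_T\sim T/\Delta$ blocks produces $\E_{X_0,\eta_0}M\leq C_1'\sqrt\Delta+C_2'\gamma(\Delta/\e)(1+\|X_0\|_{W^{1,2}})$, and the reduction $M^2\leq KT\cdot M$ from the first paragraph converts this into the claimed $L^2$-bound. The main obstacle is the handling of $I_k'$: Assumption $\mathbf{A}_\text{s}$ is stated for deterministic initial data, so it has to be invoked conditionally on $\rF_{k\Delta}^\e$ with the random but $\rF_{k\Delta}^\e$-measurable initial data $(X^\e(k\Delta,\cdot),\eta^\e(k\Delta))$, and the resulting random $W^{1,2}$-norm must then be controlled uniformly in $k$ and $\e\in(0,1]$, which is precisely what the $t$-independent moment bound of Theorem \ref{wnormmomentsboundthm} delivers.
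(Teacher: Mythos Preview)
Your proof is correct and follows essentially the same approach as the paper: both arguments reduce the $L^2$-bound to an $L^1$-bound via the uniform boundedness of the integrand, split into blocks of length $\Delta$, handle the $\widehat f$-discrepancy by the Lipschitz property of $\widehat f$ together with Lemma~\ref{etaboundthm}, and treat the main term by conditioning on $\rF_{k\Delta}^\e$, invoking the law identity \eqref{e:in_law}, Assumption~$\mathbf{A}_\text{s}$, and the uniform moment bound of Theorem~\ref{wnormmomentsboundthm}. The only difference is organizational---you decompose block by block as $I_k'+I_k''$, whereas the paper first splits globally into $I_1^\e+I_2^\e$ and then breaks $I_2^\e$ into blocks---and you are in fact more explicit than the paper about the $M^2\leq KT\cdot M$ step.
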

\begin{proof}
By the triangle inequality we obtain
\ba
\E_{X_0,\eta_0} &\sup_ {t\in [0,T]} \Big| \int_0^t \Big( f( \eta^\e([s]_\Delta) ,
\widetilde X^\e(s ,\eta^\e([s]_\Delta)),\widetilde{Z}^\e(s))
-\widehat f ( \eta^\e(s))\Big) \,\di s\Big|\\
&\leq 
\E_{X_0,\eta_0} \sup_ {t\in [0,T]} \Big| \int_0^t\Big( \widehat f ( \eta^\e([s]_\Delta))
-\widehat f ( \eta^\e(s))\Big)\,\di s\Big|\\
&+\E_{X_0,\eta_0} \sup_ {t\in [0,T]} \Big| \int_0^t \Big(f( \eta^\e([s]_\Delta) ,
\widetilde X^\e(s ,\eta^\e([s]_\Delta)),\widetilde{Z}^\e(s))
-\widehat f ( \eta^\e([s]_\Delta))\Big)\,\di s\Big|\\
&:= I_1^\e+I_2^\e.
\ea
First we make the following estimate with the help of Theorem~\ref{etaboundthm}:
\ba
\label{e:ff1}
|I_1^\e|^2&\leq \text{Lip}(\widehat f)^2\cdot  T\cdot \int_0^T\E_{X_0,\eta_0} \Big|\eta^\e([s]_\Delta) -  \eta^\e(s) \Big|^2\,\di s\\
&= C_3
\sum_{j=0}^{\lfloor T/\Delta\rfloor-1}
\int_{j\Delta}^{(j+1)\Delta}\E_{X_0,\eta_0} \Big|\eta^\e(j\Delta) -  \eta^\e(s) \Big|^2\,\di s
+C_3
\int_{[T]_\Delta}^{T}
\E_{X_0,\eta_0} \Big|\eta^\e([T]_\Delta) -  \eta^\e(s) \Big|^2\,\di s\\
&\leq 
C_4 \Delta.
\ea
To estimate $I^\e_2$, we divide the integral into a sum of integrals, take conditional expectation and use 
the fact that the processes $\widetilde X^\e(s ,\eta^\e(k\Delta))$ and $\xi^{\eta^\e(k\Delta)}(s/\e)$ have the same law. Assumption $\mathbf A_\text{s}$ yields:  
\ba
 \E&_{X_0,\eta_0} \sup_{t\in[0, T]}
\Big|\int_0^t  f( \eta^\e([s]_\Delta) ,\widetilde X^\e(s ,\eta^\e([s]_\Delta)),\widetilde{Z}^\e(s))
-\widehat f ( \eta^\e([s]_\Delta)) \,\di s\Big|\\
&\leq \E_{X_0,\eta_0} \max_{0\leq k\leq T/\Delta} \sup_{t\in[0,\Delta]}
\Big|\int_0^{k\Delta}
\Big(f( \eta^\e([s]_\Delta) ,\widetilde X^\e(s ,\eta^\e([s]_\Delta)),\widetilde{Z}^\e(s))
-\widehat f ( \eta^\e([s]_\Delta))\Big)\,\di s\\
&\hspace{3.6cm} +\int_{k\Delta}^{k\Delta+t}
\Big(f( \eta^\e(k\Delta) ,\widetilde X^\e(s ,\eta^\e(k\Delta)),\widetilde{Z}^\e(s))
-\widehat f ( \eta^\e(k\Delta))\Big)\,\di s
\Big|\\
&\leq (\|f\|_\infty+\|\widehat f\|_\infty)\Delta\\
&+\E_{X_0,\eta_0}  \max_{0\leq k\leq T/\Delta} 
\Big|\int_0^{k\Delta}
\Big(f( \eta^\e([s]_\Delta) ,\widetilde X^\e(s ,\eta^\e([s]_\Delta)),\widetilde{Z}^\e(s))
-\widehat f ( \eta^\e([s]_\Delta))\Big)\,\di s \Big|\\
& \leq C_5\Delta + I_3^\e. 
\ea
We have
\ba
\label{e:fff}
I^\e_3
&
\leq 
\E_{X_0,\eta_0}   \max_{0\leq k\leq T/\Delta} 
\Big|\sum_{j=0}^k \int_{j\Delta}^{(j+1)\Delta}
\Big(f( \eta^\e(j\Delta) ,\widetilde X^\e(s ,\eta^\e(j\Delta)),\widetilde{Z}^\e(s))
-\widehat f ( \eta^\e(j\Delta))\Big)\,\di s\Big|\\
&\leq 
\sum_{0\leq k\leq T/\Delta} \E_{X_0,\eta_0}  
\Big|\int_{k\Delta}^{(k+1)\Delta}
\Big(f( \eta^\e(k\Delta) ,\widetilde X^\e(s ,\eta^\e(k\Delta)),\widetilde{Z}^\e(s))
-\widehat f ( \eta^\e(k\Delta))\Big)\,\di s\Big|\\
%
&\leq  
\sum_{0\leq k\leq T/\Delta} \E_{X_0,\eta_0}\E_{X^\e(k\Delta,\cdot),\eta^\e(k\Delta)} \Big|  
\int_{0}^{\Delta}
\Big(f( \eta ,\widetilde X^\e(s ,\eta,\widetilde{Z}^\e(s))
-\widehat f ( \eta)\Big)\,\di s\Big|\Big|_{\eta=\eta^\e(k\Delta)} \\
&\leq 
T\E_{X_0,\eta_0}\sup_{\eta\in[0,1]}
\E_{X^\e(k\Delta,\cdot ),\eta} \Big| \frac{1}{\Delta}
\int_{0}^{\Delta}
\Big(f( \eta ,\xi^\eta(s/\e),\zeta^\eta(s/\e))
-\widehat f ( \eta)\Big)\,\di s\Big| \\
 &\leq
T \E_{X_0,\eta_0}\sup_{\eta\in[0,1]}\E_{X^\e(k\Delta,\cdot),\eta} \Big| 
\frac{\e}{\Delta}\int_{0}^{\Delta/\e}
\Big(f( \eta ,\xi^\eta(s ),\zeta^\eta(s ))
-\widehat f ( \eta)\Big)\,\di s\Big| \\
&\leq 
C_6 T \gamma\Big(\frac{\Delta}{\e}\Big)\cdot\E_{X_0,\eta_0}\Big(1+ \|X^\e(k\Delta,\cdot)\|_{W^{1,2}} \Big)\\
&\leq 
C_7 T \gamma\Big(\frac{\Delta}{\e}\Big)\cdot  \Big(1+ \|X_0(\cdot)\|_{W^{1,2}}\Big).
\ea
Eventually for some $C_8>0$ we estimate
\ba
\label{e:ff2}
\E_{X_0,\eta_0} \sup_ {t\in [0,T]} \Big| & \int_0^t f( \eta^\e([s]_\Delta) ,
\widetilde X^\e(s ,\eta^\e([s]_\Delta)),\widetilde{Z}^\e(s))
-\widehat f ( \eta^\e(s))\,\di s\Big|^2 \\
&\leq C_8\Big( \sqrt\Delta +  \Delta + \gamma\Big(\frac{\Delta}{\e}\Big)\cdot  (1+ \|X_0(\cdot)\|_{W^{1,2}} )    \Big),
\ea
and the result is obtained.
\end{proof}

Now we are ready to prove the main result.
\begin{proof}[Proof of Theorem \ref{averagingsc}]
For $t\in [0,T]$,
consider the difference
\ba
\bar\eta^\e(t)-\widehat\eta(t)&
=\int_0^t \Big(f( \eta^\e([s]_\Delta),\widetilde X^\e (s ,\eta^\e([s]_\Delta)),\widetilde{Z}^\e(s))-\widehat f( \eta^\e(s) )\Big)\,\di s\\
&+\int^t_0 \Big(\widehat f(\eta^\e(s) )-\widehat f( \widehat \eta (s) )\Big)\,\di s 
+ \int^t_0\Big(\sigma( \eta^\e(s)) -\sigma(\widehat\eta (s) )\Big)\,  \di W(s).
\ea
and estimate
\ba
\label{e:aa}
\E_{X_0,\eta_0}&\sup_{s\in [0,t]}|\bar \eta ^\e(s)-\widehat\eta(s)|^2 \\
&\leq 
3\E_{X_0,\eta_0}\sup_{s\in [0,t]}
\Big|\int_0^s  \Big(f( \eta^\e([u]_\Delta),\widetilde X^\e (u ,\eta^\e([u]_\Delta)),\widetilde{Z}^\e(u))-\widehat f( \eta^\e(u) )\Big)\,\di u   \Big|^2\\
&+ 3\E_{X_0,\eta_0}\sup_{s\in [0,t]} \Big|\int^s_0 \Big(\widehat f(\eta^\e(u) )-\widehat f( \widehat \eta (u) )\Big)\,\di u\Big|^2\\ 
&+ 3\E_{X_0,\eta_0}\sup_{s\in [0,t]} \Big|\int^s_0\Big(\sigma( \eta^\e(u)) -\sigma(\widehat\eta (u) )\Big)\,  \di W(u)\Big|^2.
\ea
The first summand in \eqref{e:aa} is bounded from above by Lemma~\ref{l:ff}. Furthermore we have 
\ba
\E_{X_0,\eta_0}\sup_{s\in [0,t]} \Big|\int^s_0 \Big(\widehat f(\eta^\e(u) )-\widehat f( \widehat \eta (u) )\Big)\,\di u\Big|^2
&\leq T \E_{X_0,\eta_0} \int^t_0 \Big|\widehat f(\eta^\e(u) )-\widehat f( \widehat \eta (u) )\Big|^2\,\di u\\
&\leq T \text{Lip}(\widehat f)^2\E_{X_0,\eta_0} \int^t_0 |\eta^\e(u) - \widehat \eta (u)|^2\,\di u\\
&\leq C_1 \int^t_0 \E_{X_0,\eta_0}\sup_{r\in[0,u] } |\eta^\e(r) - \widehat \eta (r)|^2\,\di u.
\ea
Analogously, the Doob ineqiality (see \cite[Theorem 7.2]{Mao2007}) yields
\ba
\E_{X_0,\eta_0}\sup_{s\in [0,t]} \Big|\int^s_0\Big(\sigma( \eta^\e(u)) -\sigma(\widehat\eta (u) )\Big)\,  \di W(u)\Big|^2
&\leq 4 \E_{X_0,\eta_0} \int^t_0\Big|\sigma( \eta^\e(u)) -\sigma(\widehat\eta (u) )\Big|^2\,  \di u\\
&\leq C_2 \int^t_0 \E_{X_0,\eta_0}\sup_{r\in[0,u] } |\eta^\e(r) - \widehat \eta (r)|^2\,\di u.
\ea
Eventually we get
\ba
&\E_{X_0,\eta_0}\sup_{s\in [0,t]}|\eta^\e(s)-\widehat\eta(s)|^2   \\
&\leq
2\E_{X_0,\eta_0}\sup_{s\in [0,t]}|\eta^\e(s)-\bar\eta(s)|^2 
+2\E_{X_0,\eta_0}\sup_{s\in [0,t]}|\bar\eta ^\e(s)-\widehat\eta(s)|^2 \\
&\leq C_3 \int^t_0 \E_{X_0,\eta_0}\sup_{r\in[0,u] } |\eta^\e(r) - \widehat \eta (r)|^2\,\di u + R_T(\e),
\ea
where
$R_T(\e)\to 0$ as $\e\to 0$ by Lemmas~\ref{lemdiffetaetatilde} and \ref{l:ff}.
The application of the Gronwall inequality finishes the proof.
\end{proof}

%

\end{document}